\documentclass[12pt,twoside]{amsart}
\usepackage{amssymb,amsmath,amscd,enumerate,verbatim,xcolor,mathtools,fullpage}
\usepackage[T1,T5]{fontenc}
\usepackage[top=3.5cm, bottom=2.5cm, left=3.0cm, right=3cm]{geometry}
\usepackage{color, graphicx, wrapfig}
\usepackage{indentfirst}
\usepackage{tikz}
\usepackage{array}
\usetikzlibrary{calc}
\usepackage{mathrsfs}
\usepackage{graphicx}
\usepackage[all]{xy}
\usepackage{wrapfig}
\usetikzlibrary{arrows}
\usepackage{tikz-cd}
\usepackage{cleveref}

\newcommand{\kk}{\Bbbk}

\numberwithin{equation}{section}

\newtheorem{thm}{ Theorem}[section]
\newtheorem{lem}[thm]{ Lemma}

\newtheorem{prop}[thm]{ Proposition}

\theoremstyle{definition}
\newtheorem{defn}[thm]{Definition}

\newcommand{\br}[1]{\left (#1\right)}
\newcommand{\brc}[1]{\left[ #1\right]}
\newcommand{\brb}[1]{\left\{#1\right\}}
\newcommand{\abs}[1]{\left| #1\right|}

\newcommand{\ceil}[1]{ \left \lceil #1 \right \rceil}

%\usepackage{hyperref}
%\hypersetup{
%	colorlinks=true,       % false: boxed links; true: colored links
%	linkcolor=red,          % color of internal links
%	citecolor=blue,        % color of links to bibliography
%	filecolor=magenta,      % color of file links
%	urlcolor=cyan           % color of external links
%}

%\usepackage{fancyhdr}

% Set the page style to "fancy"...
%\pagestyle{fancy}
%... then configure it.
%\fancyhead{} % clear all header fields
%\cfoot{}
%\fancyhead[RO, LE]{\thepage}
%\fancyhead[LO]{\footnotesize{THE WLP OF ARTINIAN ALGEBRAS ASSOCIATED TO CERTAIN TADPOLE GRAPHS}}  
%\fancyhead[RE]{\footnotesize{PHAN MINH H\`UNG, NGUY\~\ECIRCUMFLEX N DUY PH\UHORN\'\OHORN C V\`A TR\`\ACIRCUMFLEX N NGUY\~\ECIRCUMFLEX N THANH S\OHORN N}} 
%\renewcommand{\headrulewidth}{.4pt}
%\setlength{\headheight}{18pt} % pour fixer la hauteur de l'espace réservé à l'en-tête du haut
%\setlength{\footskip}{14pt} % pour fixer la hauteur de l'espace réservé à l'en-tête du haut
%\setlength{\headsep}{0.25in}
\begin{document}
\begin{center}
{\bf  THE WEAK LEFSCHETZ PROPERTIES OF ARTINIAN MONOMIAL ALGEBRAS ASSOCIATED TO CERTAIN TADPOLE GRAPHS}
\vspace{0.5cm}\\
{\small PHAN MINH HUNG, NGUYEN DUY PHUOC$^\ast$, AND TRAN NGUYEN THANH SON}
\vspace{0.5cm}\\
University of Education, Hue University, 34 Le Loi St., Hue City, Viet Nam.\\
Corresponding author$^\ast$: ndphuoc@dhsphue.edu.vn
\end{center}
\vspace{0.5cm}
\begin{center}
\begin{minipage}{15cm}
{\small {\bf Abstract:} Given a simple graph $G$, the artinian monomial algebra associated to $G$, denoted by $A(G)$, is defined by the edge ideal of $G$ and the squares of the variables. In this article, we classify some tadpole graphs $G$ for which $A(G)$ has or fails the weak Lefschetz property.
		
%%% ----------------------------------------------------------------------
{\bf Keywords:} artinian algebras; edge ideals; independence polynomials; Tadpole graphs; weak Lefschetz property.}
	\end{minipage} 
\end{center}

%\pagenumbering{gobble}
\section{Introduction}
Let us consider the standard graded artinian algebra $A=\bigoplus_{i=0}^s\brc{A}_i= R/I$, where $R= \Bbbk\brc{x_1,x_2,...,x_n}$ is a polynomial ring over a field $\Bbbk$, all $x_i$'s have degree $1$, and $I\subset R$ is an artinian homogenous ideal of $R$.

\begin{defn}
    We say that $A$ has the \textit{weak Lefschetz property} (WLP for short) if there exists a linear form $\ell\in \brc{A}_1$ such that the multiplication map
    \begin{align*}
        \times \ell :\brc{A}_j\longrightarrow \brc{A}_{j+1}
    \end{align*}
    has maximal rank, i.e., it is injecive or surjective, for all $j=0,1,\ldots,s-1$. In this case the linear form $\ell$ is called a \textit{Lefschetz element} of $A$. 
\end{defn} 
The Lefschetz property is an algebrization of the Hard Lefschetz theorem, which is one of the most important theorems in algebraic geometry. Studying the weak Lefschetz property gives us many applications and information in other areas, such as poset theory, Schur-Weyl duality (see, for instance, \cite{HMMNWW2013}). 

The case of artinian $\kk$-algebras defined by monomial ideals, while being rather accessible, is far from simple and the literature concerning their Lefschetz properties is quite extensive; see, for instance, \cite{AB2020,DaoNair2022, MMN2011,MNS2020}  and the references therein.  In this work, we focus on a special class of artinian algebras defined by quadratic monomials which was defined and studied in \cite{NT2024, QHT2020}. Let $G= (V,E)$ be a simple graph where $V$ is a set of elements called \textit{vertices}, and $E$ a set of elements called \textit{edges} which are unorderd pairs of vertices from $V$. Suppose that $V= \brb{1,2,\ldots,n}$ and let $R= \Bbbk\brc{x_1,x_2,\ldots,x_n}$ be a standard graded polynomial ring over a field $\Bbbk$. The \textit{edge ideal} of $G$ is the ideal $I(G)= \br{\brb{x_ix_j\mid \brb{i,j}\in E}}\subset R$.  The \textit{artinian monomial algebra associated to $G$} is defined by
\begin{align*}
    A(G) = \frac{R}{\br{x_1^2,x_2^2,\ldots,x_n^2}+I(G)}.
\end{align*}
 
We are interested in studying the WLP of $A(T_{m,n})$ for certain tadpole graphs $T_{m,n}$.
Recall that the tadpole graph, denoted by $T_{m,n}$,  is the graph obtained by joining a cycle $C_m$ to a path $P_n$ with a bridge   (\Cref{fig1}).

\begin{figure}[!ht]
	\begin{tikzpicture}[
		every edge/.style = {draw=black,very thick},
		vrtx/.style args = {#1/#2}{%
			circle, draw, thick, fill=black,
			minimum size=1mm, label=#1:#2}
		]
		\node (n1) [vrtx=above/$x_2$]  at (-1,0) {};
		\node (n2) [vrtx=above/$x_1$]at (1,0)  {};
		\node (n3) [vrtx=below/$x_6$]at (2,-1.5)  {};
		\node (n4) [vrtx=below/$x_5$]at (1,-3) {};
		\node (n5) [vrtx=below/$x_4$]at (-1,-3)  {};
		\node (n6) [vrtx=left/$x_3$]at (-2,-1.5)  {};
		\node (n7) [vrtx=below/$y_1$]at (3.5,-1.5)  {};
		\node (n8) [vrtx=below/$y_2$]at (5,-1.5)  {};
		\node (n9) [vrtx=below/$y_3$]at (6.5,-1.5)  {};
		\node (n10) [vrtx=below/$y_4$]at (8,-1.5)  {};
		\node (n11) [vrtx=below/$y_5$]at (9.5,-1.5)  {};
		\node (n12) [vrtx=below/$y_6$]at (11,-1.5)  {};
		\foreach \from/\to in {n1/n2,n2/n3,n3/n4,n4/n5,n5/n6,n6/n1, n3/n7, n8/n7, n8/n9, n9/n10, n10/n11, n11/n12}		
		\draw (\from) -- (\to);	
	\end{tikzpicture}
	\caption{Tadpole $T_{6,6}$}
	\label{fig1}
\end{figure}
 Note that the cases where $m=3$ or $n=1$ were studied in \cite{NT2024}. Our main goal in this note is to investigate the WLP of $A(T_{m,n})$ for $m\in\brb{4,5}$ or $n\in \brb{2,3}$. Our main results are the following.
\begin{thm}[Theorem \ref{wlp_Tm2}, \ref{wlp_Tm3}, \ref{wlp_T4n} and \ref{wlp_T5n}] \label{thm1.2}
Assume that $\Bbbk$ is of characteristic zero. Then
\begin{enumerate}[\quad \rm (i)]
\item  $A\br{T_{m,2}}$ has the WLP if and only if $m\in \brb{4,5,7,8,11}$.
\item $A\br{T_{m,3}}$ has the WLP if and only if $m\in \brb{3,4,5,6,7,8,10,11,14}$.
\item $A\br{T_{4,n}}$ has the WLP if and only if $n\in \brb{1,2,\ldots,7,9,10,13}$.
\item $A\br{T_{5,n}}$ has the WLP if and only if $n\in \brb{1,2,3,5,6,9}$. 
\end{enumerate}
\end{thm}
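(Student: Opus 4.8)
The plan is to translate the whole problem into the combinatorics of the independence complex of $T_{m,n}$. Since $x_v^2=0$ forces squarefree monomials and each edge relation kills products along edges, a $\kk$-basis of $[A(T_{m,n})]_j$ is given by the monomials $\prod_{v\in S}x_v$ with $S$ an independent set of size $j$; hence $\dim_{\kk}[A(T_{m,n})]_j$ equals the number $i_j$ of size-$j$ independent sets, and the Hilbert series is the independence polynomial $I(T_{m,n},t)=\sum_j i_j t^j$. Conditioning on whether the attaching cycle-vertex $c$ (the one joined to $y_1$) lies in $S$ gives the recursion
\[ I(T_{m,n},t)=I(P_{m-1},t)\,I(P_n,t)+t\,I(P_{m-3},t)\,I(P_{n-1},t), \]
since deleting $c$ leaves the disjoint union $P_{m-1}\sqcup P_n$, while forcing $c\in S$ removes its two cycle-neighbours and $y_1$, leaving $P_{m-3}\sqcup P_{n-1}$. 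Using $i_j(P_k)=\binom{k-j+1}{j}$ this makes the Hilbert function of each of the four families completely explicit, exhibits its (unimodal) shape, and locates the peak degree $d=d(m,n)$.

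For the WLP itself I would fix the candidate Lefschetz element $\ell=\sum_v x_v$ (legitimate in characteristic zero, and the natural one for a monomial algebra). In the monomial basis the map $\times\ell\colon[A]_j\to[A]_{j+1}$ is exactly the $0/1$ incidence matrix $M_j$ whose $(T,S)$ entry is $1$ when $S\subset T$ with $|S|=j$, $|T|=j+1$. Thus $A(T_{m,n})$ has the WLP precisely when every $M_j$ has maximal rank $\min(i_j,i_{j+1})$. Because the Hilbert function is unimodal, maximal rank amounts to injectivity of $M_j$ for $j<d$ and surjectivity for $j\ge d$, so the whole question concentrates at the peak: one must decide maximal rank of the one or two transition maps $M_{d-1}$ and $M_d$. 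I expect the off-peak ranks (injectivity below, surjectivity above) to follow from a Hall/order-matching argument on the independence poset, making those reductions routine.

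The heart of the proof is the rank of the peak transition map, and here I would use the same attaching-vertex splitting at the level of matrices. Writing each $[A(T_{m,n})]_j$ as the direct sum of the monomials avoiding $x_c$ and those containing $x_c$ puts $M_j$ in block-triangular form built from the incidence maps of $A(P_{m-1})\otimes A(P_n)$ and $A(P_{m-3})\otimes A(P_{n-1})$ together with the connecting ``multiply by $x_c$'' map. Feeding in the known Lefschetz behaviour of the path algebras $A(P_k)$ and of their tensor products, this yields a recursion, in $m$ for fixed $n\in\{2,3\}$ and in $n$ for fixed $m\in\{4,5\}$, for $\operatorname{rank}M_d$ as an explicit function of the parameter.

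Finally I would solve the maximal-rank condition $\operatorname{rank}M_d=\min(i_d,i_{d+1})$. It becomes an explicit numerical (in)equality in the single parameter: for small values it holds, while once the parameter is large the peak dimension outgrows what the block structure can cover, producing a persistent cokernel (equivalently, the relevant square transition matrix becomes singular), so the WLP fails for all large $m$ (resp.\ $n$). Reading off the thresholds gives the stated finite sets, and the finitely many borderline cases are confirmed by direct computation of $\operatorname{rank}M_d$. The main obstacle is exactly this rank control: the Hilbert function alone, even its unimodality, does not decide the WLP, so one must genuinely prove that the incidence/transition matrix attains maximal rank for precisely the listed parameters and drops rank otherwise — that is, one needs the nonvanishing of a determinant coming from the tensor-product maps, not merely a dimension count.
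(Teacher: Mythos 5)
There is a genuine gap at the heart of your plan: you have no mechanism for proving that the peak transition map \emph{fails} maximal rank for all large parameters. Your block-triangular decomposition of $M_j$ induced by the attaching vertex only yields a \emph{lower} bound on $\operatorname{rank}M_j$ (the rank of a block-triangular matrix is at least the sum of the ranks of its diagonal blocks, but can exceed it because of the connecting block). A lower bound can certify maximal rank in the finitely many positive cases, but to prove failure of the WLP for every large $m$ (resp.\ $n$) you need an \emph{upper} bound on the rank, and the assertion that ``the peak dimension outgrows what the block structure can cover, producing a persistent cokernel'' is exactly the claim that has to be proved; nothing in your outline delivers it. The paper's route supplies precisely this missing mechanism: it uses the surjection $R/I\twoheadrightarrow R/(I+(x_v))$ onto a path algebra or a tensor product $A(P_a)\otimes_\kk A(P_b)$ (which commutes with $\times\ell$, so failure of surjectivity downstairs forces failure upstairs), and dually the injection $R/(I:(x_v))(-1)\hookrightarrow R/I$ to transfer failure of injectivity, feeding in the known failures for $A(P_n)$ and $A(C_n)$ from \cite{NT2024} together with \Cref{tensor}. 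Your proposal never invokes these known negative results for paths, which are the actual engine of the argument.

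A second, related omission is the degree bookkeeping. A transferred failure of surjectivity at some degree $j$ only contradicts maximal rank if $j$ lies at or beyond the mode of $I(T_{m,n};t)$ (and similarly a failure of injectivity must occur strictly below the mode). This is why the paper devotes Section~3 to pinning the mode of $I(T_{m,n};t)$ to a window of one or two values around $\rho_m$ or $\lambda_{n+2}$, and why each theorem in Section~4 splits into cases according to where the mode actually sits, sometimes descending to $\lambda_{m-4}$ or $\lambda_{n-2}$ to find a usable degree. Your outline locates ``the peak degree $d(m,n)$'' in principle via the explicit binomial formula, but does not confront the fact that the comparison between $d(m,n)$ and the degrees at which the path/cycle algebras are known to fail is what drives the entire case analysis. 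Finally, your claim that injectivity below the peak and surjectivity above it follow from a ``routine Hall/order-matching argument'' is unsupported and unnecessary for the paper's argument (the positive cases are finite and checked by computer), but as stated it is not something you could take for granted for these algebras.
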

The proof combines {\tt Macaulay2} \cite{Macaulay2} computations with inductive arguments based on the unimodality of the independence polynomials of the relevant graphs.

Our paper is structured as follows. In the next section we recall relevant terminology and results on artinian algebras, Lefschetz properties, and graph theory. In Section~3, we investigate the unimodality and the mode of the independence polynomials of certain tadpole graphs. These results are useful to prove \Cref{thm1.2} in Section~4.

%%%%%%%%%%%%%%%%%%%%%%%%%%%%%%%%%%%%%%%%%%%%
\section{Preliminaries}
In this section we recall some standard terminology and notations from commutative algebra and combinatorial commutative algebra, as well as some results needed later on.
\subsection{The weak Lefschetz property}
In this paper we consider artinian algebras defined by monomial ideals, and in this case it suffices to choose the Lefschetz element to be the sum of the variables.
\begin{prop}{\rm \cite[Proposition 2.2]{MMN2011}}
    Let $I\subset R=\Bbbk\brc{x_1,x_2,...,x_n}$ be an artinian monomial ideal. Then $A= R/I$ has the WLP if and only if $\ell = x_1+x_2+\cdots+x_n$ is a Lefschetz element for $A$.
\end{prop}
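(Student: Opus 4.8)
The plan is to prove the two implications separately, the reverse being immediate and the forward direction resting on a torus-symmetry argument. If $\ell = x_1 + x_2 + \cdots + x_n$ is a Lefschetz element, then by the very definition of the WLP the algebra $A$ has the WLP, so the ``if'' direction needs nothing. For the ``only if'' direction I would first record that having maximal rank is a Zariski-open condition: for a fixed degree $j$, the map $\times \ell : \brc{A}_j \to \brc{A}_{j+1}$ fails to have maximal rank exactly when all maximal minors of its matrix vanish, and these entries are linear in the coefficients of $\ell$. Hence the set of $\ell \in \brc{A}_1$ for which $\times \ell$ has maximal rank is open, and intersecting over $j = 0, 1, \ldots, s-1$ yields a Zariski-open set $U \subseteq \brc{A}_1$ consisting precisely of the Lefschetz elements. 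The hypothesis that $A$ has the WLP says exactly that $U \neq \varnothing$, and since $\Bbbk$ is infinite, a nonempty open subset of the affine space $\brc{A}_1$ is dense.

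The key idea is to exploit that $I$ is a \emph{monomial} ideal. I would let the torus $\br{\Bbbk^*}^n$ act on $R$ by $x_i \mapsto t_i x_i$; because $I$ is generated by monomials, this action preserves $I$ and so descends to a family of graded $\Bbbk$-algebra automorphisms of $A$. For any such automorphism $\phi$ and any $\ell \in \brc{A}_1$, the identity $\phi(\ell \cdot a) = \phi(\ell)\,\phi(a)$ gives a commutative square relating $\times \ell$ to $\times \phi(\ell)$ through the graded isomorphisms $\phi|_{\brc{A}_j}$ and $\phi|_{\brc{A}_{j+1}}$. Consequently $\times \ell$ has maximal rank if and only if $\times \phi(\ell)$ does, so $U$ is invariant under the torus action and being a Lefschetz element is an intrinsic property of each orbit.

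To finish, I would observe that the orbit of $\ell_0 = x_1 + \cdots + x_n$ under the torus is precisely the set of linear forms $\sum a_i x_i$ with every $a_i \neq 0$, which is itself a nonempty Zariski-open subset of $\brc{A}_1$. Since both this orbit and $U$ are nonempty open, hence dense, their intersection is nonempty: there is a Lefschetz element $\ell' = \sum a_i x_i$ with all coefficients nonzero. Writing $\ell' = \phi(\ell_0)$ for the appropriate $\phi$ in the torus and applying the orbit-invariance of the previous step, I conclude that $\ell_0$ is a Lefschetz element as well.

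The main obstacle I anticipate is justifying the automorphism step cleanly: verifying that the torus genuinely acts by well-defined graded algebra automorphisms of the quotient $A$ (which is where monomiality of $I$ is used essentially) and that such automorphisms preserve the maximal-rank property degree by degree. The density arguments tacitly require $\Bbbk$ to be infinite, which holds here since the paper works in characteristic zero; over a finite field one would have to pass to an infinite extension first, and I would flag that as the precise point where the standing hypotheses on $\Bbbk$ enter.
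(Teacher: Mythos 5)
Your argument is correct: the ``if'' direction is trivial, and the ``only if'' direction via the openness of the maximal-rank locus together with the diagonal torus action preserving the monomial ideal $I$ (so that the Lefschetz locus is a union of torus orbits, and the dense orbit of $x_1+\cdots+x_n$ must meet any nonempty open set) is exactly the standard proof. The paper itself gives no proof of this statement --- it is quoted verbatim from \cite[Proposition 2.2]{MMN2011} --- and your reasoning reproduces the argument of that reference, including the correct observation that $\Bbbk$ must be infinite for the density step, which is harmless here since the paper works in characteristic zero.
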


A necessary condition for the WLP of an artinian algebra $A$ is the unimodality of the Hilbert series of $A$. 

\begin{defn}
 Let $A=\bigoplus_{j\geq 0} [A]_j$ be a standard graded $\kk$-algebra. The {\it Hilbert series} of $A$ is the power series $\sum\dim_\kk [A]_i t^i$ and is denoted by $HS(A,t)$. The {\it Hilbert function} of $A$ is the function  $h_A: \mathbb{N}\longrightarrow \mathbb{N}$ defined by $h_A(j)=\dim_\kk [A]_j$.\\
\end{defn}
If $A$ is an artinian graded algebra, then $[A]_i=0$ for $i\gg 0.$  Denote
$$D=\max\{i\mid [A]_i\neq 0\},$$
the \emph{socle degree} of $A$. In this case, the Hilbert series of $A$ is a polynomial
$$HS(A,t)=1+h_1t+\cdots+ h_Dt^D,$$
where $h_i=\dim_\Bbbk [A]_i>0$. By definition, the degree of the Hilbert series for an artinian graded algebra $A$ is equal to its socle degree $D.$ 

\begin{defn}
A polynomial $\sum_{k=0}^na_kt^k \in \mathbb{R}[t]$ with non-negative coefficients is called {\it unimodal} if there is some $m$, such that
$$a_0\leq a_1\leq \cdots \leq a_{m-1}\leq a_m \geq a_{m+1}\geq \cdots \geq a_n.$$ 
Set $a_{-1}=0$. The {\it mode} of the unimodal polynomial $\sum_{k=0}^na_kt^k$ is defined to be the unique integer $i$ between $0$ and $n$ such that
$$
a_{i-1}< a_i \geq a_{i+1}\geq \cdots \geq a_n.
$$
\end{defn}

\begin{prop}{\rm \cite[Proposition 3.2]{HMMNWW2013}}\label{unimodalityofHilbertseries}
If $A$ has the WLP then the Hilbert series of $A$ is unimodal.
\end{prop}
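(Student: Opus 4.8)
The plan is to track how the maximal-rank hypothesis on multiplication by $\ell$ constrains the Hilbert function $h_j = \dim_\kk [A]_j$, and to show that these constraints force the sequence $h_0, h_1, \ldots, h_D$ to first weakly increase and then weakly decrease. First I would record the elementary translation of the maximal-rank condition: for each $j$ the map $\times\ell\colon [A]_j \to [A]_{j+1}$ has rank $\min(h_j, h_{j+1})$, so it is injective precisely when $h_j \le h_{j+1}$ and surjective precisely when $h_j \ge h_{j+1}$ (and an isomorphism exactly when equality holds). This dichotomy is the only way the hypothesis will enter.

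The heart of the argument is a \emph{propagation lemma}: if $\times\ell\colon [A]_j \to [A]_{j+1}$ is surjective, then $\times\ell\colon [A]_{j+1}\to[A]_{j+2}$ is surjective as well. To see this I would use that $A$ is standard graded, so $[A]_{j+2} = [A]_1\cdot[A]_{j+1}$; combining this with the surjectivity hypothesis $[A]_{j+1} = \ell\,[A]_j$ and the commutativity of $A$ gives $[A]_{j+2} = [A]_1\cdot \ell\,[A]_j = \ell\cdot[A]_1[A]_j = \ell\,[A]_{j+1}$, which is exactly surjectivity of $\times\ell$ in degree $j+1$. Iterating, once $\times\ell$ is surjective in some degree it remains surjective in every higher degree.

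To assemble the proof, I would note that since $A$ is artinian we have $[A]_{D+1}=0$, so $\times\ell\colon [A]_D\to[A]_{D+1}$ is trivially surjective; hence the set of degrees at which $\times\ell$ is surjective is nonempty, and I let $m$ be the smallest such degree. For every $j<m$ the map is not surjective, so by the maximal-rank dichotomy it is injective and $h_j < h_{j+1}$; for every $j\ge m$ the propagation lemma makes $\times\ell$ surjective, whence $h_{j+1}\le h_j$. Chaining these inequalities yields $h_0 < \cdots < h_m \ge h_{m+1} \ge \cdots \ge h_D$, which is precisely the unimodality of the Hilbert series (and incidentally identifies $m$ as its mode).

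The only real content lies in the propagation lemma, and the point that needs care there is that multiplying the standard-graded identity $[A]_{j+2}=[A]_1[A]_{j+1}$ by the factored form $\ell\,[A]_j$ genuinely lets $\ell$ commute past the degree-one generators; the remaining steps are the routine case analysis of maximal rank and the bookkeeping of the resulting inequalities. I would also state explicitly that artinianness is what guarantees a surjective degree exists, so that the increasing phase cannot continue indefinitely and the transition degree $m$ is well defined.
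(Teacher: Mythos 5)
Your argument is correct and complete: the propagation lemma (surjectivity of $\times\ell$ in one degree forces surjectivity in all higher degrees, via $[A]_{j+2}=[A]_1\cdot\ell[A]_j=\ell[A]_{j+1}$) together with the maximal-rank dichotomy is exactly the standard proof of this fact. The paper itself gives no proof, importing the statement from \cite[Proposition~3.2]{HMMNWW2013}, and your reasoning is essentially the argument given there; the only points worth stating explicitly, which you do handle, are that a non-surjective maximal-rank map is injective with \emph{strictly} smaller source dimension, and that artinianness supplies the vacuously surjective map into $[A]_{D+1}=0$ so the transition degree exists.
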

Finally, to study the failure of the WLP of tensor products of $\kk$-algebras, the following simple lemma turns out to be quite useful. 
\begin{lem}{\rm \cite[Lemma 7.8]{BMMNZ12}}\label{tensor}
Let $A= A'\otimes_\Bbbk A''$ be the tensor product of two graded artinian $\Bbbk-$algebras $A'$ and $A''$. Let $\ell'\in A'$ and $\ell''\in A''$ be linear elements, and set $\ell= \ell' +\ell''= \ell'\otimes 1+1\otimes \ell''\in A$. Then 
\begin{enumerate}[\quad \rm (i)]
\item  If the multiplication maps $\times\ell':\brc{A'}_i\longrightarrow\brc{A'}_{i+1}$ and $\times\ell'':\brc{A''}_{j}\longrightarrow\brc{A''}_{j+1}$ are both not surjective, then neither is the map $\times\ell:\brc{A}_{i+j+1}\longrightarrow\brc{A}_{i+j+2}$.
\item If the multiplication maps $\times\ell':\brc{A'}_i\longrightarrow\brc{A'}_{i+1}$ and $\times\ell'':\brc{A''}_j\longrightarrow\brc{A''}_{j+1}$ are both not injective, then neither is the map $\times\ell: \brc{A}_{i+j}\longrightarrow\brc{A}_{i+j+1}$. 
\end{enumerate}
\end{lem}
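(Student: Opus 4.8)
The plan is to treat the two parts separately, since they have quite different flavors: the injectivity statement (ii) admits a direct witness, whereas the surjectivity statement (i) is most cleanly handled by dualizing. Throughout I would use the graded decomposition $[A]_d=\bigoplus_{a+b=d}[A']_a\otimes_\Bbbk[A'']_b$ together with the Leibniz-type action $\ell\cdot(p\otimes q)=(\ell'p)\otimes q+p\otimes(\ell''q)$, which is just the definition $\ell=\ell'\otimes 1+1\otimes\ell''$ unwound.

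For part (ii) I would argue directly. Because the two multiplication maps fail to be injective, there are nonzero elements $u\in[A']_i$ with $\ell'u=0$ and $v\in[A'']_j$ with $\ell''v=0$. Then $u\otimes v$ is a nonzero element of $[A]_{i+j}$, and by the Leibniz rule $\ell\cdot(u\otimes v)=(\ell'u)\otimes v+u\otimes(\ell''v)=0$, so $u\otimes v$ lies in the kernel of $\times\ell\colon[A]_{i+j}\to[A]_{i+j+1}$. This immediately gives the claimed non-injectivity.

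For part (i) a single element no longer certifies failure of surjectivity, so I would pass to linear functionals. Non-surjectivity of $\times\ell'\colon[A']_i\to[A']_{i+1}$ means there is a nonzero $\phi'\in[A']_{i+1}^{*}$ vanishing on the image $\ell'[A']_i$, and likewise a nonzero $\phi''\in[A'']_{j+1}^{*}$ vanishing on $\ell''[A'']_j$. The idea is to assemble from these a nonzero functional $\Phi\in[A]_{i+j+2}^{*}$ that annihilates the image of $\times\ell$, which certifies non-surjectivity. Concretely, I would let $\Phi$ equal $\phi'\otimes\phi''$ on the single summand $[A']_{i+1}\otimes[A'']_{j+1}$ and be zero on every other summand $[A']_a\otimes[A'']_b$ with $a+b=i+j+2$. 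Checking $\Phi\circ(\times\ell)=0$ reduces to elementary tensors: applying $\Phi$ to $\ell\cdot(p\otimes q)=(\ell'p)\otimes q+p\otimes(\ell''q)$, the support condition on $\Phi$ forces the only possibly-surviving contributions to be $\phi'(\ell'p)\phi''(q)$ (from the bidegree $a=i,\,b=j+1$) and $\phi'(p)\phi''(\ell''q)$ (from $a=i+1,\,b=j$), each of which vanishes by the defining property of $\phi'$ or $\phi''$ respectively.

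The main obstacle is really part (i): one must resist using $u\otimes v$ as a direct analogue, since that does not witness failure of surjectivity, and instead carry out the dual construction, being careful that $\Phi$ is supported on exactly the bidegree $(i+1,j+1)$ so that each of the two Leibniz terms is killed for a different reason. Everything else is bookkeeping with the graded tensor decomposition, and no hypothesis on $\Bbbk$ is needed, as the argument is pure linear algebra.
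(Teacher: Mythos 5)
The paper does not prove this lemma at all: it is quoted verbatim from \cite[Lemma 7.8]{BMMNZ12} and used as a black box, so there is no internal proof to compare against. Your argument is correct and complete on its own terms. Part (ii) is the standard witness argument: $u\otimes v$ is nonzero because $\Bbbk$ is a field and the graded piece $[A]_{i+j}$ splits as $\bigoplus_{a+b=i+j}[A']_a\otimes_\Bbbk [A'']_b$, and the Leibniz computation kills it. For part (i), your dual functional $\Phi$ supported in bidegree $(i+1,j+1)$ does the job; the bookkeeping that each of the two Leibniz terms can only hit that bidegree from a different source bidegree, and is then annihilated by $\phi'$ or $\phi''$ respectively, is exactly the point. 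An equivalent and slightly slicker packaging of (i), closer to how this is usually proved in the literature, is to note that non-surjectivity of $\times\ell'$ in the relevant degree means $[A'/\ell'A']_{i+1}\neq 0$ and likewise $[A''/\ell''A'']_{j+1}\neq 0$; since $\ell A\subseteq (\ell',\ell'')A$ there is a surjection $A/\ell A\twoheadrightarrow A'/\ell'A'\otimes_\Bbbk A''/\ell''A''$, whose target is nonzero in degree $i+j+2$, so $[A/\ell A]_{i+j+2}\neq 0$. That is the same linear algebra as your functional $\Phi$, just phrased module-theoretically. No changes needed.
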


\subsection{Graph theory} 
From now on, by a graph we mean a simple graph $G=(V,E)$ with the vertex set $V=V(G)$ and the edge set $E=E(G)$. 
We start by recalling some basic definitions. 
\begin{defn}
The {\it disjoint union} of the graphs $G_1$ and $G_2$ is a graph $G=G_1\cup G_2$ having as vertex set the disjoint union of $V(G_1)$ and $V(G_2)$, and as edge set the disjoint union of $E(G_1)$ and $E(G_2)$. In particular, $\cup_{m}G$ denotes the disjoint union of $m>1$ copies of the graph $G$.
\end{defn}
\begin{defn}
Let $G=(V,E)$ be a graph.
\begin{enumerate}[\quad \rm (i)]
\item  A subset $X$ of $V$ is called an {\it independent set} of $G$ if for any $u,v\in X,\ \{u,v\}\notin E$, i.e., the vertices in $X$ are pairwise non-adjacent. If an independent set $X$ has $k$ elements, then we say that $X$ is an {\it independent set of size $k$} or a $k$-independent set of $G$.
\item The {\it independence number} of a graph $G$ is the largest cardinality of an independent set of $G$. We denote this value by $\alpha(G)$.
\end{enumerate}
\end{defn}
\begin{defn}
The {\it independence polynomial} of a graph $G$ is a polynomial in one variable $t$ whose coefficient of $t^k$ is given by the number of independent sets of size $k$ of $G$. We denote this polynomial by $I(G;t)$, i.e.,
$$I(G;t)=\sum_{k=0}^{\alpha(G)}s_k(G)t^k,$$
where $s_k(G)$ is the number of independent sets of size $k$ in $G$. Note that $s_0(G)=1$ since $\emptyset$ is an independent set of any graph $G$.
\end{defn}
The independence polynomial of a graph was defined by Gutman and Harary in \cite{GH83} as a generalization of the matching polynomial of a graph. 
For a vertex $v\in V$, its \emph{open neighborhood} $N(v)$ is the set of vertices $u\neq v$ that are adjacent to $v$, and its \emph{closed neighborhood} is $N[v]= N(v) \cup \{v\}$. For a subset $U\subset V$, let $G\setminus U$ denote the graph obtained from $G$ by deleting all vertices in $U$ and all edges adjacent to those vertices. In particular, a vertex $v\in V$, we simply write $G\setminus v$ instead of  $G\setminus \{v\}$. The following equalities are very useful to compute the independent polynomials of various families of graphs.
\begin{prop}{\rm \cite[Theorem 2.3 and Corollary 3.3]{HL94}}\label{comp_inde_graph}
Let $G_1,G_2, G$ be the graphs. Assume that $G=(V,E)$ and $ v\in V$. Then the following equalities hold:
\begin{enumerate}
\item [\rm (i)] $I(G;t)=I(G\setminus v;t)+t\cdot I(G\setminus N[v];t)$;
\item [\rm (ii)] $I(G_1\cup G_2;t)=I(G_1;t)I(G_2;t)$.
\end{enumerate}
\end{prop}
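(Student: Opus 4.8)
The plan is to prove both identities by directly comparing the coefficients of $t^k$ on each side, using the combinatorial definition $I(G;t)=\sum_k s_k(G)t^k$, where $s_k(G)$ counts the $k$-independent sets of $G$. Since two polynomials agree if and only if all their coefficients agree, it suffices to exhibit, for each $k$, a bijection between the relevant families of independent sets.

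For part (i), I would fix the vertex $v$ and partition the independent sets of $G$ into two classes according to whether they contain $v$. Every independent set $X$ with $v\notin X$ uses only vertices of $V\setminus\{v\}$ and remains independent in $G\setminus v$; conversely, every independent set of $G\setminus v$ is an independent set of $G$ avoiding $v$. Hence the $k$-independent sets of $G$ not containing $v$ are counted by $s_k(G\setminus v)$, which accounts for the term $I(G\setminus v;t)$. If instead $v\in X$, then independence forbids every neighbor of $v$ from lying in $X$, so $X\setminus\{v\}$ is an independent set contained in $V\setminus N[v]$, i.e.\ an independent set of $G\setminus N[v]$; conversely, adjoining $v$ to any independent set of $G\setminus N[v]$ yields an independent set of $G$ containing $v$. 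The correspondence $X\mapsto X\setminus\{v\}$ is thus a bijection that decreases cardinality by exactly one, so the $k$-independent sets of $G$ containing $v$ are counted by $s_{k-1}(G\setminus N[v])$. Summing over $k$ produces the shifted series $t\cdot I(G\setminus N[v];t)$, and adding the two contributions yields (i).

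For part (ii), the key observation is that $G_1\cup G_2$ has no edges joining the two components, so a subset $X\subseteq V(G_1)\sqcup V(G_2)$ is independent in $G_1\cup G_2$ if and only if $X\cap V(G_1)$ is independent in $G_1$ and $X\cap V(G_2)$ is independent in $G_2$. This gives a bijection between $k$-independent sets of $G_1\cup G_2$ and pairs $(X_1,X_2)$ of independent sets with $\abs{X_1}+\abs{X_2}=k$, whence $s_k(G_1\cup G_2)=\sum_{i+j=k}s_i(G_1)s_j(G_2)$. Recognizing the right-hand side as the coefficient of $t^k$ in the Cauchy product $I(G_1;t)I(G_2;t)$ completes the proof.

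Neither identity presents a genuine obstacle; both reduce to elementary counting. The only points requiring care are checking that the described maps are honest bijections---in particular that the single removed vertex $v$ in (i) produces exactly the degree shift recorded by the factor $t$---and that the convolution in (ii) is indexed correctly.
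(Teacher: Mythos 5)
Your proof is correct. The paper does not prove this proposition at all --- it simply cites \cite[Theorem 2.3 and Corollary 3.3]{HL94} --- and your argument (partitioning the independent sets of $G$ by whether they contain $v$ for part (i), and the Cauchy-product decomposition of independent sets of a disjoint union for part (ii)) is exactly the standard proof given in that reference, so nothing is missing.
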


\subsection{Artinian monomial algebras associated to graphs}
A connection between combinatorial information of a graph and the artinian monomial algebra associated to it is given as follows.

\begin{prop}{\rm \cite[Proposition 2.10]{NT2024}}
The Hilbert series of $A(G)$ is equal to the independent polynomial of $G$.
\end{prop}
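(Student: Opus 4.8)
The plan is to produce an explicit $\kk$-basis of $A(G)$ whose elements are indexed, degree by degree, by the independent sets of $G$. Write $J=\br{x_1^2,\ldots,x_n^2}+I(G)$, so that $A(G)=R/J$. Since every generator of $J$ is a monomial, $J$ is a monomial ideal, and hence $J$ is spanned as a $\kk$-vector space by the monomials it contains; consequently the residue classes of the monomials \emph{not} lying in $J$ (the standard monomials) form a $\kk$-basis of $R/J$. As $J$ is generated by homogeneous elements (all of degree $2$), it is a homogeneous ideal, so this basis is compatible with the grading: a standard monomial of degree $k$ contributes to $\brc{A(G)}_k$. It therefore suffices to identify the standard monomials and to count them by degree.

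First I would describe membership in $J$. A monomial $m$ lies in $J$ if and only if it is divisible by one of the generators, that is, by some $x_i^2$ or by some $x_ix_j$ with $\brb{i,j}\in E$. Equivalently, $m\notin J$ exactly when $m$ is squarefree and its support $\brb{i : x_i \mid m}$ contains no edge of $G$, i.e., the support is an independent set of $G$. Thus the standard monomials are precisely the squarefree monomials whose support is an independent set.

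To finish, I would set up the obvious bijection: sending a squarefree monomial $\prod_{i\in S}x_i$ to its support $S$ identifies the squarefree monomials of degree $k$ with the $k$-element subsets of $V$, and under this correspondence the standard monomials of degree $k$ match exactly the independent sets of size $k$. Hence $\dim_\kk\brc{A(G)}_k = s_k(G)$ for every $k$, and summing over $k$ gives $HS(A(G),t)=\sum_k s_k(G)\,t^k = I(G;t)$. The argument is really just careful bookkeeping; the only nontrivial ingredient is the standard fact that the images of the standard monomials form a basis of a monomial quotient $R/J$, which I would invoke rather than reprove. Once that is granted, everything else is immediate from the definitions of the edge ideal and the independence polynomial.
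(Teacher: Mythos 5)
Your argument is correct and complete: the paper itself gives no proof of this proposition, citing it from \cite{NT2024}, and your reasoning (standard monomials of the monomial quotient form a graded basis, and these are exactly the squarefree monomials with independent support) is precisely the standard argument one would expect the cited source to use. Nothing is missing.
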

Therefore, the WLP of $A(G)$ has strong consequences on the unimodality of the independence polynomial of $G$ by \Cref{unimodalityofHilbertseries}.

We close this section by recalling some results regarding  paths $P_n$, cycles $C_n$, and Pan graphs $\mbox{Pan}_n$ (i.e., tadpole graph $T_{n,1}$). In \cite{NT2024}, the independence polynomials of these graphs are unimodal. Denote by $\lambda_n,\rho_n$ and $\zeta_n$ the mode of $I\br{P_n;t},I\br{C_n;t}$ and $I\br{\mbox{Pan}_n;t}$, respectively. 

\begin{prop}{\rm \cite[Lemmas 3.2, 3.4, 3.5 and 3.6]{NT2024}}\label{compare_modes}
The following inequalities hold:
\begin{enumerate}[\quad \rm (i)]
\item  For all $n\geq 1$, there are inequalities $\lambda_{n+1}\geq \lambda_n$, $\lambda_{n+3}-1\leq \lambda_n\leq \lambda_{n+4}-1$.
\item For all $n\geq 5$, there are inequalities $\lambda_{n-1}\leq \rho_n\leq \lambda_{n-4}+1\leq \lambda_n$.
\item For all $n\geq 5$, there are inequalities $\rho_n\leq \lambda_n\leq \zeta_n\leq \rho_n+1\leq\lambda_n+1$. 
\end{enumerate}
\end{prop}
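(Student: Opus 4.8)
The plan is to reduce everything to paths, whose independence polynomials are completely explicit, and then read off the modes of $C_n$ and $\mbox{Pan}_n$ from the deletion formula in \Cref{comp_inde_graph}. Deleting an endpoint of $P_n$ and applying \Cref{comp_inde_graph}(i) gives $I(P_n;t)=I(P_{n-1};t)+t\,I(P_{n-2};t)$, which on coefficients reads $s_k(P_n)=s_k(P_{n-1})+s_{k-1}(P_{n-2})$ and yields the closed form $s_k(P_n)=\binom{n-k+1}{k}$. Applying \Cref{comp_inde_graph}(i) to a cycle vertex and to the pendant vertex respectively gives $I(C_n;t)=I(P_{n-1};t)+t\,I(P_{n-3};t)$ and $I(\mbox{Pan}_n;t)=I(C_n;t)+t\,I(P_{n-1};t)$, hence $s_k(C_n)=s_k(P_{n-1})+s_{k-1}(P_{n-3})$ and $s_k(\mbox{Pan}_n)=s_k(C_n)+s_{k-1}(P_{n-1})$. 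All three sequences are unimodal (by the cited results), so each mode is the first index at which the sequence stops strictly increasing.

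For (i) I would work entirely with the binomial formula. The ratio $s_{k+1}(P_n)/s_k(P_n)=\dfrac{(n-2k+1)(n-2k)}{(k+1)(n-k+1)}$ shows that $\lambda_n$ is the least integer $k$ with $f_n(k)\le 0$, where $f_n(k)=5k^2-(5n+2)k+(n^2-1)$; thus $\lambda_n=\lceil k_-(n)\rceil$, with $k_-(n)=\frac{1}{10}\bigl(5n+2-\sqrt{5n^2+20n+24}\bigr)$ the smaller root. Since $k_-$ is strictly increasing, $\lambda_{n+1}\ge\lambda_n$ is immediate. The remaining inequalities are the assertions $\lambda_{n+3}\le\lambda_n+1$ and $\lambda_{n+4}\ge\lambda_n+1$; both follow from the estimates $k_-(n+3)-k_-(n)<1$ and $k_-(n+4)-k_-(n)>1$ (the increments tend to $3\cdot\frac{5-\sqrt5}{10}<1$ and $4\cdot\frac{5-\sqrt5}{10}>1$), combined with careful control of the ceiling function and a direct check of a handful of small $n$. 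Alternatively, the increment identity $f_{n+1}(k)-f_n(k)=-5k+2n+1$ lets one compare the sign of $f_m(k)$ at the relevant shifted arguments without extracting roots.

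For (ii) and (iii) I would exploit the two recursions above together with the following elementary principle: if $(a_k)$ and $(b_k)$ are non-negative unimodal sequences with modes $p\le q$, then their sum is non-decreasing for $k<p$ and non-increasing for $k>q$, so any mode of $a_k+b_k$ lies in $[p,q]$. Applied to $s_k(C_n)=s_k(P_{n-1})+s_{k-1}(P_{n-3})$, whose summands have modes $\lambda_{n-1}$ and $\lambda_{n-3}+1$ (and $\lambda_{n-1}\le\lambda_{n-3}+1$ by part (i)), this yields the lower bound $\rho_n\ge\lambda_{n-1}$ and the crude upper bound $\rho_n\le\lambda_{n-3}+1$; the chain $\lambda_{n-4}+1\le\lambda_n$ is just part (i). Applied to $s_k(\mbox{Pan}_n)=s_k(C_n)+s_{k-1}(P_{n-1})$, whose summand modes are $\rho_n$ and $\lambda_{n-1}+1\le\rho_n+1$, it gives the upper bound $\zeta_n\le\rho_n+1$ for free.

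The hard part will be upgrading the crude bounds to the sharp ones, namely $\rho_n\le\lambda_{n-4}+1$ in (ii) and $\lambda_n\le\zeta_n$ in (iii), since here the mode of the sum must be pinned down more precisely than the interval $[\min,\max]$ allows (indeed $\lambda_{n-4}+1$ can be strictly smaller than the mode $\lambda_{n-3}+1$ of one summand, and $\lambda_n$ can exceed $\rho_n$). For these I would analyze the consecutive differences $s_k(C_n)-s_{k+1}(C_n)$ and $s_{k+1}(\mbox{Pan}_n)-s_k(\mbox{Pan}_n)$ directly, splitting each into the corresponding path differences and using the explicit sign of $\binom{n-k+1}{k}-\binom{n-k}{k+1}$ near the mode to show that the contribution of the dominant summand forces the sum to stop (resp. keep) increasing at the claimed index. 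This will require a case analysis according to the residue of $n$ modulo a small integer together with the verification of several base cases, which I would confirm with {\tt Macaulay2}; isolating the exact crossover index is the main technical obstacle.
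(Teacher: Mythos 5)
Note first that the paper offers no proof of this proposition: it is imported wholesale from \cite{NT2024} (Lemmas 3.2 and 3.4--3.6 there), so your attempt can only be measured against that external source. Where a comparison is possible, your setup agrees with it: the recursions $I(P_n;t)=I(P_{n-1};t)+tI(P_{n-2};t)$, $I(C_n;t)=I(P_{n-1};t)+tI(P_{n-3};t)$ and $I(\mbox{Pan}_n;t)=I(C_n;t)+tI(P_{n-1};t)$ are correct instances of \Cref{comp_inde_graph}; the ratio computation leading to $f_n(k)=5k^2-(5n+2)k+(n^2-1)$ is right; and your conclusion $\lambda_n=\ceil{k_-(n)}$ is exactly the formula the paper later quotes from \cite[Proposition~3.1]{NT2024} in the proof of \Cref{wlp_T5n}. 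Part (i) does close along the lines you indicate: $k_-$ is increasing and concave, so its increments decrease to $(5-\sqrt5)/10\approx 0.276$, giving $k_-(n+4)-k_-(n)>1$ for every $n$ with no base cases at all, and $k_-(n+3)-k_-(n)<1$ after the single check $n=1$; the ceiling bookkeeping then yields $\lambda_{n+3}\le\lambda_n+1\le\lambda_{n+4}$. Your sandwich principle for sums of unimodal sequences is also sound (the lower bound on the mode of the sum uses the strict increase of the smaller-mode summand at its own mode), and it honestly delivers $\lambda_{n-1}\le\rho_n$, $\zeta_n\le\rho_n+1$, and the outer links via (i).

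The genuine gap is the one you flag yourself: the two inequalities your interval argument cannot reach, namely $\rho_n\le\lambda_{n-4}+1$ and $\lambda_n\le\zeta_n$, are left as a programme (``analyze the consecutive differences\dots\ case analysis according to the residue of $n$\dots\ the main technical obstacle'') rather than carried out, and these are precisely the load-bearing inequalities downstream --- for instance Subcase 2.1 of \Cref{wlp_Tm2} and Subcase 3.2 of \Cref{wlp_Tm3} invoke the sharp chain $\rho_m\le\lambda_{m-4}+1\le\lambda_m$. The sharpening is not cosmetic: for $n=8$ one has $\lambda_{n-4}+1=2$ while your crude bound only gives $\rho_8\le\lambda_5+1=3$, and indeed $\rho_8=2$. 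A cleaner way to finish inside your own framework is to avoid difference-of-binomials casework altogether and repeat the quadratic-root computation for the cycle: from $s_k(C_n)=\frac{n}{n-k}\binom{n-k}{k}$ one gets $s_{k+1}(C_n)\le s_k(C_n)$ iff $5k^2-(5n-4)k+(n-1)^2\le 0$, hence $\rho_n=\ceil{\br{5n-4-\sqrt{5n^2-4}}/10}$, and both $\lambda_{n-1}\le\rho_n\le\lambda_{n-4}+1$ and (after the analogous computation for $\mbox{Pan}_n$, or a direct sign check of $s_{\lambda_n}(\mbox{Pan}_n)-s_{\lambda_n-1}(\mbox{Pan}_n)$) the inequality $\lambda_n\le\zeta_n$ reduce to the same elementary increment estimates you already used in (i). As written, the proof is incomplete at exactly its two hardest points.
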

\begin{thm}{\rm \cite[Theorem 4.2 and Proposition 4.3]{NT2024}}\label{thm_Pn}
 Assume that $\Bbbk$ is of characteristic zero. For an integer $n\ge 1$, $A\br{P_n}$ has the WLP if and only if $n\in \brb{1,2,\ldots,7,9,10,13}$. In particular, one has
 \begin{enumerate}[\quad \rm (i)]
\item For all $n\geq 17$, $A\br{P_n}$ fails the surjectivity at degree $\lambda_n$.
\item If $n\geq 12$ is an integer such that $\lambda_n=\lambda_{n-1}+1$, then $A\br{P_n}$ fails the injectivity from degree $\lambda_n-1$ to $\lambda_n$.
 \end{enumerate}
\end{thm}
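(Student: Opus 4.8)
The plan is to reduce everything to the independence polynomial and its mode, and then to localize the possible failures of the WLP to the two degrees adjacent to the mode $\lambda_n$. Since the Hilbert series of $A\br{P_n}$ equals $I\br{P_n;t}$, which is unimodal with mode $\lambda_n$, the map $\times\ell\colon \brc{A\br{P_n}}_{j}\to\brc{A\br{P_n}}_{j+1}$ can only fail to have maximal rank in the range around $\lambda_n$: for $j<\lambda_n$ one needs injectivity and for $j\ge\lambda_n$ one needs surjectivity, and away from the mode these hold for trivial dimension reasons. Thus it suffices to understand the single surjective map at degree $\lambda_n$ and the single injective map from $\lambda_n-1$ to $\lambda_n$. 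For the finitely many small cases I would verify directly with {\tt Macaulay2} that the WLP holds exactly for $n\in\brb{1,\ldots,7,9,10,13}$ and fails for the small exceptions $n\in\brb{8,11,12}$; these computations also furnish the base cases for the inductive steps below.

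For the surjectivity failure (i), I would exploit the fact that deleting a vertex splits the path: for a cut vertex $x_k$ one has $P_n\setminus x_k=P_{k-1}\cup P_{n-k}$, so setting $x_k=0$ gives a surjection of graded algebras $A\br{P_n}\twoheadrightarrow A\br{P_{k-1}}\otimes_\Bbbk A\br{P_{n-k}}$ carrying $\ell$ to the sum of the remaining variables, which is precisely the Lefschetz element of the disjoint union. Because surjectivity of $\times\ell$ passes to quotients, it is enough to show that the tensor product fails surjectivity at degree $\lambda_n$, and this is supplied by \Cref{tensor}(i) once each factor $A\br{P_{k-1}}$, $A\br{P_{n-k}}$ fails surjectivity at a suitable degree; for a single path this happens at every degree where its Hilbert function strictly increases, in particular just below its own mode, and inductively at the mode itself.

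For the injectivity failure (ii), I would instead work inside $A\br{P_n}$ with the principal ideal $\br{x_k}$. Since $x_k^2=0$ and $x_k$ annihilates its two neighbors, $\br{x_k}$ is $\ell$-stable and, as a graded $\Bbbk\brc{\ell}$-module, is isomorphic to $A\br{P_{k-2}}\otimes_\Bbbk A\br{P_{n-k-1}}$ shifted by one, with $\ell$ acting as the Lefschetz element of the disjoint union. Hence a class in the kernel of $\times\ell$ on the tensor product produces a class in the kernel of $\times\ell$ on $A\br{P_n}$ one degree higher, so by \Cref{tensor}(ii) it is enough to make both tensor factors fail injectivity, which happens above their modes, where the Hilbert function strictly decreases. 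The hypothesis $\lambda_n=\lambda_{n-1}+1$ is exactly what forces the resulting kernel class to sit in degree $\lambda_n-1$.

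The main obstacle in both parts is the degree arithmetic: I must choose the cut vertex $x_k$ so that the failure degree produced by \Cref{tensor}, a degree of the form $i+j+1$ (with an extra one-step shift in the injective case), lands precisely on $\lambda_n$, respectively $\lambda_n-1$, rather than merely on some degree $\ge\lambda_n$. This is where the mode comparisons of \Cref{compare_modes}(i) enter: the inequalities $\lambda_{n+1}\ge\lambda_n$ and $\lambda_{n+3}\le\lambda_n+1\le\lambda_{n+4}$ show that the modes grow by at most one over short ranges, so that as $k$ varies the quantity $\lambda_{k-1}+\lambda_{n-k}$ sweeps through a full interval of integers and can be pinned to the required value by a discrete intermediate-value argument. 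Controlling the mode of the product $I\br{P_{k-1};t}\,I\br{P_{n-k};t}$, that is, that it is essentially additive in the factor modes, is the delicate point, and I expect it to rest on the unimodality (indeed log-concavity) of the independence polynomials of paths recorded before \Cref{compare_modes}. Finally I would assemble the pieces: (i) disposes of all $n\ge 17$, the remaining values $n\in\brb{14,15,16}$ are handled by (ii) when the mode jumps and otherwise directly by {\tt Macaulay2}, and the small computations settle $n\le 13$, yielding the stated equivalence together with the two refined failure statements.
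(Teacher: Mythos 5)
The first thing to note is that this paper does not prove the statement at all: it is imported verbatim from \cite{NT2024} and used as a black box in Section~4, so there is no in-paper proof to compare yours against. That said, your strategy --- splitting the path at a cut vertex to obtain a surjection $A\br{P_n}\twoheadrightarrow A\br{P_{k-1}}\otimes_\Bbbk A\br{P_{n-k}}$ for surjectivity failures, and the injection $R/\br{I:x_k}(-1)\hookrightarrow R/I$ with $R/\br{I:x_k}\cong A\br{P_{k-2}}\otimes_\Bbbk A\br{P_{n-k-1}}$ for injectivity failures, combined with \Cref{tensor} and the mode inequalities of \Cref{compare_modes} --- is exactly the toolkit this paper deploys for the tadpole graphs in Theorems \ref{wlp_Tm2}--\ref{wlp_T5n}, and is very plausibly how \cite{NT2024} argues. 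Your observations that a strict increase $h_j<h_{j+1}$ of the Hilbert function forces non-surjectivity and a strict decrease forces non-injectivity are correct and do real work; moreover (a fact you need but do not state) non-surjectivity at any degree $d\geq\lambda_n$ implies non-surjectivity at $\lambda_n$ itself, because surjectivity of $\times\ell$ propagates upward in a standard graded algebra, so for part (i) it suffices to produce a failure at or above the mode.

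There is, however, a genuine error in your opening reduction: you assert that away from the mode the maps have maximal rank ``for trivial dimension reasons.'' Dimension inequalities never force a map to have maximal rank; $h_j\leq h_{j+1}$ is necessary, not sufficient, for injectivity of $[A]_j\to[A]_{j+1}$. Surjectivity from degree $\lambda_n$ onward does propagate upward, but injectivity of $\times\ell:[A]_{\lambda_n-1}\to[A]_{\lambda_n}$ does not propagate downward for a general artinian algebra (that implication requires extra hypotheses, e.g.\ that the algebra is level), so ``it suffices to understand two maps'' is unjustified as stated. The damage is contained --- the positive cases $n\in\brb{1,\ldots,7,9,10,13}$ are finitely many and settled by {\tt Macaulay2}, and failure of the WLP needs only one bad degree --- but this false reduction should not be presented as the backbone of the argument. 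A smaller point: your worry about the mode of the product $I\br{P_{k-1};t}\,I\br{P_{n-k};t}$ being ``additive in the factor modes'' is a red herring, since \Cref{tensor} needs only a failing degree in each factor, not the mode of the product. The genuinely delicate step, which you correctly identify but do not carry out, is choosing the cut vertex so that $\lambda_{k-1}+\lambda_{n-k}+1$ (resp.\ the shifted degree $\lambda_{k-2}+\lambda_{n-k-1}+1$) lands at $\lambda_n$ (resp.\ at $\lambda_n-1$); that is precisely where the hypothesis $\lambda_n=\lambda_{n-1}+1$ in (ii) and the inequalities of \Cref{compare_modes}(i) must actually be used, and without that bookkeeping the proposal remains a plan rather than a proof.
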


\begin{thm}{\rm \cite[Theorem 4.4]{NT2024}}\label{thm_Cn}
  Assume that $\Bbbk$ is of characteristic zero. For an integer $n\ge 3$, $A\br{C_n}$ has the WLP if and only if $n\in \brb{3,4,\ldots,11,13,14,17}$. In particular, for all $n\geq 21$, $A\br{C_n}$ fails the surjectivity at degree $\rho_n$.
\end{thm}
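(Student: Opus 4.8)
\section*{Proof proposal}

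The statement has three ingredients: that the WLP holds for every $n$ in the list, that it fails for every other $n$, and the sharper ``in particular'' claim that surjectivity fails at degree $\rho_n$ for all $n\ge 21$. Since both the positive list $\{3,\dots,11,13,14,17\}$ and the finitely many small exceptions $n\in\{12,15,16,18,19,20\}$ are bounded, the plan is to dispatch all $n\le 20$ by direct {\tt Macaulay2} computation: for each such $n$ one forms $A(C_n)$, sets $\ell=x_1+\cdots+x_n$, and compares the rank of $\times\ell\colon[A(C_n)]_j\to[A(C_n)]_{j+1}$ with $\min(h_j,h_{j+1})$ in every degree. Because the WLP is detected by this one linear form $\cite[Proposition 2.2]{MMN2011}$, this settles the ``if'' direction completely and the ``only if'' direction for $n\le 20$. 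Everything then reduces to the displayed claim: for $n\ge 21$ the map $\times\ell$ fails to be surjective at degree $\rho_n$. Since $h_{\rho_n}(C_n)\ge h_{\rho_n+1}(C_n)$ at the mode, surjectivity is the maximal-rank condition there, so its failure already forces the failure of WLP for all $n\ge 21$.

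For the surjectivity failure I would exploit the vertex deletion at $v=x_1$. Sorting the squarefree monomial basis of $A(C_n)$ by divisibility by $x_1$ gives, as graded vector spaces, $A(C_n)=A(P_{n-1})\oplus x_1A(P_{n-3})$, where $P_{n-1}=C_n\setminus x_1$ and $P_{n-3}=C_n\setminus N[x_1]$; this is exactly the recursion $I(C_n;t)=I(P_{n-1};t)+t\,I(P_{n-3};t)$ of \Cref{comp_inde_graph}(i). The key observation is that in this basis the multiplication map is block lower-triangular,
\[
\times\ell=\begin{pmatrix}\ell_{P_{n-1}} & 0\\ \mu & \ell_{P_{n-3}}\end{pmatrix},
\]
with diagonal blocks the Lefschetz maps of the two sub-paths and off-diagonal block $\mu$ the ``multiply by $x_1$'' map; the upper-right block vanishes because multiplying a monomial divisible by $x_1$ by any variable cannot remove $x_1$, so it stays in the $x_1A(P_{n-3})$ summand. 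Projecting onto the top summand, the image of $\times\ell$ has top component exactly $\operatorname{im}(\ell_{P_{n-1}})$, so the cokernel of $\times\ell$ surjects onto that of $\ell_{P_{n-1}}$, yielding in every degree $d$ the clean bound
\[
\dim\operatorname{coker}\!\big(\times\ell\colon[A(C_n)]_d\to[A(C_n)]_{d+1}\big)\ \ge\ \dim\operatorname{coker}\!\big(\ell_{P_{n-1}}\colon[A(P_{n-1})]_d\to[A(P_{n-1})]_{d+1}\big).
\]
Thus it suffices to show that $A(P_{n-1})$ is not surjective in degree $\rho_n$.

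To locate $\rho_n$ relative to the path modes I would invoke \Cref{compare_modes}: combining $\lambda_{n-1}\le\rho_n\le\lambda_{n-4}+1$ with the step inequalities for $\lambda$ pins $\rho_n$ to one of the two values $\lambda_{n-1}$ or $\lambda_{n-1}+1$. When $\rho_n=\lambda_{n-1}$ the required non-surjectivity of $A(P_{n-1})$ at $\rho_n$ is precisely \Cref{thm_Pn}(i) (applicable since $n-1\ge 20\ge 17$), and the bound above transfers the failure to $C_n$. The main obstacle is the boundary case $\rho_n=\lambda_{n-1}+1$, where one needs $A(P_{n-1})$ to fail surjectivity one degree \emph{beyond} its own mode, whereas \Cref{thm_Pn}(i) only guarantees failure exactly at $\lambda_{n-1}$. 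I expect to close this gap either by retaining the second, $P_{n-3}$, contribution to the cokernel of $A(C_n)$ (which the triangular form makes explicit), or, more robustly, by proving a quantitative strengthening of \Cref{thm_Pn}(i)—that $\dim\operatorname{coker}(\ell_{P_m})$ stays strictly positive on a short range of degrees around $\lambda_m$—established by induction on $m$ through the recursion $I(P_m;t)=I(P_{m-1};t)+t\,I(P_{m-2};t)$ and the mode bounds of \Cref{compare_modes}. This quantitative control of the path cokernel near the mode is the crux of the argument; once it is in place, the threshold $n\ge 21$ leaves exactly the small cases $n\le 20$ to the {\tt Macaulay2} verification described above, completing the proof.
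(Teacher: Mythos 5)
First, a point of reference: the paper does not prove \Cref{thm_Cn} at all --- it is imported verbatim from \cite[Theorem 4.4]{NT2024} --- so there is no internal proof to compare against, and your argument has to stand on its own. Most of it does. The graded decomposition $A(C_n)=A(P_{n-1})\oplus x_1A(P_{n-3})$ with the block lower-triangular form of $\times\ell$ is correct, and is just a hands-on version of the surjection $R/I\twoheadrightarrow R/(I+(x_1))\cong A(P_{n-1})$ that this paper uses constantly; it correctly yields $\dim\operatorname{coker}(\times\ell)_d\ge\dim\operatorname{coker}(\ell_{P_{n-1}})_d$. Combined with $\rho_n\in\{\lambda_{n-1},\lambda_{n-1}+1\}$ (from \Cref{compare_modes}) and \Cref{thm_Pn}(i), this settles the case $\rho_n=\lambda_{n-1}$. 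The genuine gap is the case $\rho_n=\lambda_{n-1}+1$, which you explicitly leave open, and neither proposed repair works as stated. ``Retaining the $P_{n-3}$ contribution'' cannot help: your triangular form shows precisely that $\operatorname{rank}(\times\ell)_d\le\operatorname{rank}(\ell_{P_{n-1}})_d+\dim[A(P_{n-3})]_d$ and nothing more, because the off-diagonal block $\mu$ can make the bottom summand surjective even when $\ell_{P_{n-3}}$ is not; so the only cokernel you control is that of the top block, which by hypothesis sits one degree below where you need it. The ``quantitative strengthening'' of \Cref{thm_Pn}(i) is an unproven new lemma, not a step of a proof.

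The gap is closable with tools already in this paper, in the style of Subcase 2.2 of \Cref{wlp_Tm2}. If $\rho_n=\lambda_{n-1}+1$, then $\rho_n\le\lambda_{n-4}+1$ together with $\lambda_{n-4}\le\lambda_{n-1}$ forces $\lambda_{n-4}=\lambda_{n-3}=\lambda_{n-2}=\lambda_{n-1}=\rho_n-1$. Now quotient by two cycle vertices three steps apart, say $x_1$ and $x_4$: the result is $A(P_2)\otimes_\Bbbk A(P_{n-4})$. Here $A(P_2)$ fails surjectivity at degree $0$ (its Hilbert function is $1,2$), and $A(P_{n-4})$ fails surjectivity at degree $\lambda_{n-4}$ by \Cref{thm_Pn}(i), which applies exactly when $n-4\ge 17$, i.e.\ $n\ge 21$ --- this is where the threshold in the statement comes from. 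By \Cref{tensor}(i) the quotient fails surjectivity at degree $\lambda_{n-4}+1=\rho_n$, and this lifts to $A(C_n)$ through the surjection $R/I\twoheadrightarrow R/(I+(x_1,x_4))$. Note that your single-vertex deletion cannot see this case at all (beware also that $A(P_1)$ does \emph{not} fail surjectivity at degree $0$, so the arc lengths in the two-vertex deletion must be chosen as $2$ and $n-4$, not $1$ and $n-3$). Without this, or an equivalent argument, your proposal does not establish the ``in particular'' clause, which is the only part of the theorem requiring proof for $n\ge 21$.
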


%%%%%%%%%%%%%%%%%%%%%%%%%%%%%%%%%%%%%%%%%%%%%%%
\section{Independence polynomial of some tadpole graphs}

In this paper, we will consider some tadpole graphs, that are $T_{m,2}$, $T_{m,3}$, $T_{4,n}$ and $T_{5,n}$. To study the unimodality of polynomials, the following result is useful. Note that given a polynomial $f(x)= \sum_{i=0}^na_ix^i$, we will regard $a_k=0$ for all $k>n$ or $k<0$.

\begin{lem}\label{unimodal_mode_unit}
Let $f$ and $g$ be two unimodal polynomials with real-nonnegative coefficients and modes $p,q$, respectively, such that $\abs{p-q}\leq 1$. Then $f+g$ is also unimodal whose mode belongs to $\brb{\min\brb{p,q},\min\brb{p,q}+1}$.
\end{lem}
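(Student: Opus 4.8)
The plan is to reduce everything to a coefficient-by-coefficient comparison after normalizing the roles of $f$ and $g$. Write $f=\sum_k a_kt^k$ and $g=\sum_k b_kt^k$, and set $c_k=a_k+b_k$, so that $f+g=\sum_k c_kt^k$. Since $f+g=g+f$, I may assume $p\leq q$, so that the hypothesis $\abs{p-q}\leq 1$ forces $q\in\brb{p,p+1}$ and $\min\brb{p,q}=p$. The first step is to unpack what it means for $p$ to be the mode of the unimodal polynomial $f$: reading the definition, $p$ is the leftmost maximizer, so $a_{k-1}\leq a_k$ for all $k\leq p$ with the strict inequality $a_{p-1}<a_p$, and $a_k\geq a_{k+1}$ for all $k\geq p$. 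The analogous monotonicity holds for $g$ with $q$ in place of $p$.

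With these facts recorded, the case $q=p$ should be immediate: for $k\leq p$ both $a_{k-1}\leq a_k$ and $b_{k-1}\leq b_k$ hold, so $c_{k-1}\leq c_k$; for $k\geq p$ both summand-sequences are nonincreasing, so $c_k\geq c_{k+1}$. Hence $c_0\leq\cdots\leq c_p\geq c_{p+1}\geq\cdots$ is unimodal, and since $a_{p-1}<a_p$ together with $b_{p-1}\leq b_p$ gives $c_{p-1}<c_p$, the mode of $f+g$ is exactly $p=\min\brb{p,q}$.

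The substantive case is $q=p+1$. Here the same additivity argument shows $c_{k-1}\leq c_k$ for $k\leq p$ (both summands are still in their nondecreasing range) and $c_k\geq c_{k+1}$ for $k\geq p+1$ (both summands are now in their nonincreasing range). Thus I obtain $c_0\leq\cdots\leq c_p$ and $c_{p+1}\geq c_{p+2}\geq\cdots$, and the \emph{only} comparison left undetermined is between $c_p$ and $c_{p+1}$. This is precisely the place where the two peaks overlap, since $f$ has already begun to decrease ($a_p\geq a_{p+1}$) while $g$ is still increasing ($b_p<b_{p+1}$), so the sign of $c_{p+1}-c_p=(a_{p+1}-a_p)+(b_{p+1}-b_p)$ genuinely depends on the polynomials. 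I expect this single comparison to be the main (indeed the only) obstacle, and it is resolved by a dichotomy rather than an estimate.

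To finish, I split on that comparison. If $c_p\geq c_{p+1}$, then $c_0\leq\cdots\leq c_p\geq c_{p+1}\geq\cdots$ is unimodal; combined with $c_{p-1}<c_p$ (from $a_{p-1}<a_p$ and $b_{p-1}\leq b_p$) this identifies the mode as $p$. If instead $c_p<c_{p+1}$, then $c_0\leq\cdots\leq c_p<c_{p+1}\geq c_{p+2}\geq\cdots$ is unimodal with the strict jump occurring at $p+1$, so the mode is $p+1$. In either situation $f+g$ is unimodal with mode in $\brb{p,p+1}=\brb{\min\brb{p,q},\min\brb{p,q}+1}$, which is the claim.
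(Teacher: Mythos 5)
Your proof is correct and follows essentially the same route as the paper's: reduce to $p\leq q$, add the termwise monotonicity of the two coefficient sequences on the common nondecreasing and nonincreasing ranges, and resolve the single undetermined comparison between the coefficients at $p$ and $p+1$ by a dichotomy. Your write-up is, if anything, slightly more careful than the paper's in isolating exactly which comparison is left open and in tracking the strict inequality that pins down the mode.
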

\begin{proof}
Without loss of generality, assume that $p\leq q$. Assume that
\begin{align*}
&f(x) = a_0 + a_1x + a_2x^2 + \cdots + a_{n-1}x^{n-1}+ a_nx^n,\\
&g(x) = b_0 + b_1x + b_2x^2 + \cdots + b_{m-1}x^{m-1}+ b_mx^m.
\end{align*}
	
Then
\begin{align*}
&a_0 \leq a_1\leq \cdots \leq a_{p-1}< a_p\geq a_{p+1}\geq\cdots \geq a_n,\\
&b_0 \leq b_1\leq \cdots \leq b_{q-1}<b_q\geq b_{q+1}\geq \cdots \geq b_m.
\end{align*}
If $p=q$, then $f+g$ is unimodal with mode $p$. Now, if $q=p+1$, it is easy to see that
\begin{align*}
&a_{i-1}+b_{i-1}\leq a_i+b_i,\forall i=1,\ldots,p-1,\\
&a_i+b_i\geq a_{i+1}+b_{i+1},\forall i=p+1,\ldots,\max\brb{m,n}\\
&a_{p-1}+b_{p-1}<a_p+b_p.
\end{align*}
If $a_p+b_p\geq a_{p+1}+b_{p+1}$, then $f+g$ is unimodal with mode $p$. And if $a_p+b_p<a_{p+1}+b_{p+1}$, then $f+g$ is unimodal with mode $p+1$.
\end{proof}
\begin{figure}[!ht]
	\begin{tikzpicture}[
		every edge/.style = {draw=black,very thick},
		vrtx/.style args = {#1/#2}{%
			circle, draw, thick, fill=black,
			minimum size=1mm, label=#1:#2}
		]
		\node (n1) [vrtx=above/$x_2$]  at (-1,0) {};
		\node (n2) [vrtx=above/$x_1$]at (1,0)  {};
		\node (n3) [vrtx=below/$x_m$]at (2,-1.5)  {};
		\node (n4) [vrtx=below/$x_{m-1}$]at (1,-3) {};
		\node (n5) [vrtx=below/$$]at (-1,-3)  {};
		\node (n6) [vrtx=left/$$]at (-2,-1.5)  {};
		\node (n7) [vrtx=below/$y_1$]at (3.5,-1.5)  {};
		\node (n8) [vrtx=below/$y_2$]at (5,-1.5)  {};
		\foreach \from/\to in {n1/n2,n2/n3,n3/n4,n4/n5,n5/n6,n6/n1, n3/n7, n8/n7}		
		\draw (\from) -- (\to);	
	\end{tikzpicture}
    \caption{The tadpole $T_{m,2}$}
    \label{fig2}
\end{figure}
Recall that $\rho_m$ is the mode of the independence polynomial of  $C_m$. 
\begin{prop}\label{modeT_{m,n}}
$I\br{T_{m,2};t}$ is unimodal with the mode belongs to $\brb{\rho_m,\rho_m+1}$, for all $m\ge 5$.
\end{prop}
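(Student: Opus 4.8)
The plan is to compute $I\br{T_{m,2};t}$ via the fundamental deletion recurrence of \Cref{comp_inde_graph}(i), applied at the leaf $y_2$ of the pendant path, and then to combine the mode information already recorded for cycles and pan graphs with \Cref{unimodal_mode_unit}.

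First I would expand at $v=y_2$. Since $N[y_2]=\brb{y_1,y_2}$, deleting $y_2$ leaves the pan graph $\mbox{Pan}_m=T_{m,1}$ (the cycle $C_m$ together with the single pendant $y_1$), whereas deleting $N[y_2]$ removes both $y_1$ and $y_2$ and leaves the bare cycle $C_m$. \Cref{comp_inde_graph}(i) then gives
\[
I\br{T_{m,2};t}=I\br{\mbox{Pan}_m;t}+t\cdot I\br{C_m;t}.
\]
I would sanity-check this identity on a small case, say $m=5$, before proceeding.

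Next I would record the modes of the two summands. By \cite{NT2024} the polynomial $I\br{\mbox{Pan}_m;t}$ is unimodal with mode $\zeta_m$, and \Cref{compare_modes}(iii) gives $\zeta_m\in\brb{\rho_m,\rho_m+1}$ for all $m\ge 5$. Likewise $I\br{C_m;t}$ is unimodal with mode $\rho_m$; multiplying by $t$ shifts every coefficient up one degree, so $t\cdot I\br{C_m;t}$ is unimodal with mode $\rho_m+1$. In particular the two modes $\zeta_m$ and $\rho_m+1$ differ by at most $1$, so \Cref{unimodal_mode_unit} applies and $I\br{T_{m,2};t}$ is unimodal.

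The final step, and the only genuinely delicate point, is to pin the mode down to $\brb{\rho_m,\rho_m+1}$ rather than the a priori wider window that a blunt citation of \Cref{unimodal_mode_unit} would give. Here I would split according to the value of $\zeta_m$. If $\zeta_m=\rho_m$, then $\min\brb{\zeta_m,\rho_m+1}=\rho_m$ and the lemma immediately places the mode in $\brb{\rho_m,\rho_m+1}$. If instead $\zeta_m=\rho_m+1$, the two summands share the common mode $\rho_m+1$; this is precisely the $p=q$ situation inside the proof of \Cref{unimodal_mode_unit}, where the sum is shown to have mode \emph{exactly} $\rho_m+1$, so the spurious value $\rho_m+2$ permitted by the stated range does not actually occur. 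In both cases the mode lies in $\brb{\rho_m,\rho_m+1}$, as claimed. Thus the main obstacle is not the unimodality itself but this $p=q$ sharpening, which forces me to read off the exact mode from the internals of \Cref{unimodal_mode_unit} rather than rely on its stated conclusion alone.
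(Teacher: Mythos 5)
Your proposal is correct and follows essentially the same route as the paper: the same deletion recurrence at $y_2$ giving $I\br{T_{m,2};t}=I\br{\mbox{Pan}_m;t}+tI\br{C_m;t}$, the same case split on $\zeta_m\in\brb{\rho_m,\rho_m+1}$ via \Cref{compare_modes}(iii), and the same use of \Cref{unimodal_mode_unit}. Your explicit remark that the $\zeta_m=\rho_m+1$ case needs the $p=q$ sharpening from inside the lemma's proof (rather than its stated $\brb{\min\brb{p,q},\min\brb{p,q}+1}$ conclusion) is exactly what the paper's Case~2 uses implicitly.
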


\begin{proof}
Applying  \Cref{comp_inde_graph}(i) for the vertex numbered $y_2$ (\Cref{fig2})
\begin{equation*}
I\br{T_{m,2};t}=I\br{T_{m,2}\setminus y_2;t}+tI\br{T_{m,2}\setminus N\brc{y_2};t}=I\br{\mbox{Pan}_m;t}+tI\br{C_m;t}
\end{equation*}
    
By \Cref{compare_modes}(iii), we have two following cases:\\
\underline{\textbf{Case 1.} $\zeta_m=\rho_m$.} It is easy to see that the mode of $tI\br{C_m;t}$ is $\rho_m+1$. Hence, by \Cref{unimodal_mode_unit}, we have that $I\br{T_{m,2};t}$ is unimodal whose mode belongs to $\brb{\rho_m,\rho_m+1}$.\\
\underline{\textbf{Case 2.}  $\zeta_m=\rho_m+1$.} Because the mode of $tI\br{C_m;t}$ is $\rho_m+1$, $I\br{T_{m,2};t}$ is unimodal whose mode is $\rho_m+1$. 

    We conclude that $I\br{T_{m,2};t}$ is unimodal with the mode belongs to $\brb{\rho_m,\rho_m+1}$.
\end{proof}

\begin{prop}
$I\br{T_{m,3};t}$ is unimodal with the mode belongs to $\brb{\rho_m,\rho_m+1,\rho_m+2}$, for all $m\ge 5$.
\end{prop}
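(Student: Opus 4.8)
The plan is to mirror the argument for $T_{m,2}$ in \Cref{modeT_{m,n}}, now peeling off the free endpoint $y_3$ of the tail. Applying the deletion--contraction identity \Cref{comp_inde_graph}(i) at $y_3$ gives
\begin{equation*}
I\br{T_{m,3};t}=I\br{T_{m,3}\setminus y_3;t}+tI\br{T_{m,3}\setminus N\brc{y_3};t}.
\end{equation*}
Deleting $y_3$ leaves the tadpole $T_{m,2}$, while deleting the closed neighborhood $N\brc{y_3}=\brb{y_2,y_3}$ leaves the cycle $C_m$ together with the single pendant $y_1$, that is, the pan graph $\mbox{Pan}_m$. Thus $I\br{T_{m,3};t}=I\br{T_{m,2};t}+tI\br{\mbox{Pan}_m;t}$, which reduces the statement to controlling the modes of these two summands.

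By \Cref{modeT_{m,n}} the first summand $I\br{T_{m,2};t}$ is unimodal with mode in $\brb{\rho_m,\rho_m+1}$, while $tI\br{\mbox{Pan}_m;t}$ is unimodal with mode $\zeta_m+1$. The strategy is to show that these two modes always differ by at most $1$, so that \Cref{unimodal_mode_unit} applies and confines the mode of the sum to $\brb{\rho_m,\rho_m+1,\rho_m+2}$. Since \Cref{compare_modes}(iii) gives $\zeta_m\in\brb{\rho_m,\rho_m+1}$, I would split into two cases. If $\zeta_m=\rho_m$, then $tI\br{\mbox{Pan}_m;t}$ has mode $\rho_m+1$, which differs by at most $1$ from any value in $\brb{\rho_m,\rho_m+1}$; \Cref{unimodal_mode_unit} then yields mode in $\brb{\rho_m,\rho_m+1,\rho_m+2}$. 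If $\zeta_m=\rho_m+1$, then $tI\br{\mbox{Pan}_m;t}$ has mode $\rho_m+2$, and I would invoke the sharper information from Case 2 of the proof of \Cref{modeT_{m,n}}, namely that $\zeta_m=\rho_m+1$ forces the mode of $I\br{T_{m,2};t}$ to be exactly $\rho_m+1$; the two modes are then $\rho_m+1$ and $\rho_m+2$, again differing by $1$, and \Cref{unimodal_mode_unit} gives mode in $\brb{\rho_m+1,\rho_m+2}$.

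I expect the only real obstacle to be this last point. The black-box statement of \Cref{modeT_{m,n}} only locates the mode of $T_{m,2}$ in $\brb{\rho_m,\rho_m+1}$, but in the subcase $\zeta_m=\rho_m+1$ I must rule out the value $\rho_m$: otherwise the two modes could be $\rho_m$ and $\rho_m+2$, which are too far apart for \Cref{unimodal_mode_unit}. This gap is closed by reading the proof of \Cref{modeT_{m,n}} rather than its statement --- there, in Case 2, the summand $tI\br{C_m;t}$ has mode $\rho_m+1$ and the pan summand $I\br{\mbox{Pan}_m;t}$ has mode $\zeta_m=\rho_m+1$, so their sum $I\br{T_{m,2};t}$ has mode exactly $\rho_m+1$. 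With this refinement in hand, the two cases together establish that $I\br{T_{m,3};t}$ is unimodal with mode in $\brb{\rho_m,\rho_m+1,\rho_m+2}$.
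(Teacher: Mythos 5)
Your proof is correct, but it takes a different decomposition from the paper's. The paper deletes the \emph{middle} tail vertex $y_2$, obtaining $I\br{T_{m,3};t}=\br{1+t}I\br{\mbox{Pan}_m;t}+tI\br{C_m;t}$, and then applies \Cref{unimodal_mode_unit} twice; since both summands are expressed directly in terms of $\mbox{Pan}_m$ and $C_m$, the case split on $\zeta_m$ from \Cref{compare_modes}(iii) suffices and the argument never needs to know the mode of $I\br{T_{m,2};t}$ exactly. Your deletion at $y_3$ gives the inductive identity $I\br{T_{m,3};t}=I\br{T_{m,2};t}+tI\br{\mbox{Pan}_m;t}$, which is arguably more natural but, as you correctly diagnose, runs into the problem that the stated conclusion of \Cref{modeT_{m,n}} only locates the mode of $I\br{T_{m,2};t}$ in $\brb{\rho_m,\rho_m+1}$ --- not close enough to $\zeta_m+1=\rho_m+2$ when $\zeta_m=\rho_m+1$. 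Your fix, extracting from Case 2 of that proof the sharper fact that $\zeta_m=\rho_m+1$ forces the mode of $I\br{T_{m,2};t}$ to equal $\rho_m+1$ exactly (both summands there have mode $\rho_m+1$, so \Cref{unimodal_mode_unit} pins the sum's mode down), is valid and closes the gap. The trade-off: the paper's route is self-contained at the level of stated results, while yours is shorter to set up but requires strengthening \Cref{modeT_{m,n}} by inspecting its proof; if you wanted to use your decomposition, the clean way would be to record that refinement as part of the statement of \Cref{modeT_{m,n}}.
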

\begin{proof}
 \begin{figure}[!ht]
	\begin{tikzpicture}[
		every edge/.style = {draw=black,very thick},
		vrtx/.style args = {#1/#2}{%
			circle, draw, thick, fill=black,
			minimum size=1mm, label=#1:#2}
		]
		\node (n1) [vrtx=above/$x_2$]  at (-1,0) {};
		\node (n2) [vrtx=above/$x_1$]at (1,0)  {};
		\node (n3) [vrtx=below/$x_m$]at (2,-1.5)  {};
		\node (n4) [vrtx=below/$x_{m-1}$]at (1,-3) {};
		\node (n5) [vrtx=below/$$]at (-1,-3)  {};
		\node (n6) [vrtx=left/$$]at (-2,-1.5)  {};
		\node (n7) [vrtx=below/$y_1$]at (3.5,-1.5)  {};
		\node (n8) [vrtx=below/$y_2$]at (5,-1.5)  {};
        \node (n9) [vrtx=below/$y_3$]at (6.5,-1.5)  {};
		\foreach \from/\to in {n1/n2,n2/n3,n3/n4,n4/n5,n5/n6,n6/n1, n3/n7, n8/n7,n8/n9}		
		\draw (\from) -- (\to);	
	\end{tikzpicture}
    \caption{The tadpole $T_{m,3}$}
    \label{fig3}
\end{figure}
Applying  \Cref{comp_inde_graph}(i) for the vertex numbered $y_2$ (\Cref{fig3})
    \begin{equation*}
        I\br{T_{m,3};t}=I\br{T_{m,3}\setminus y_2;t}+tI\br{T_{m,3}\setminus N\brc{y_2};t}=\br{1+t}I\br{\mbox{Pan}_m;t}+tI\br{C_m;t}
    \end{equation*}
    
By \Cref{compare_modes}, we have two following cases:\\
    \underline{\textbf{Case 1.} $\zeta_m=\rho_m$.} Applying \Cref{unimodal_mode_unit}, $\br{1+t}I\br{\mbox{Pan}_m;t}= I\br{\mbox{Pan}_m;t}+tI\br{\mbox{Pan}_m;t}$ is unimodal whose mode is in  $\brb{\rho_m,\rho_m+1}$. On the other hand, the mode of $tI\br{C_m;t}$ is $\rho_m+1$. Hence, applying \Cref{unimodal_mode_unit} again, one has that $I\br{T_{m,3};t}$ is also unimodal whose mode belongs to $\brb{\rho_m,\rho_m+1}$.\\
    \underline{\textbf{Case 2.} $\zeta_m=\rho_m+1$.} Applying \Cref{unimodal_mode_unit}, $\br{1+t}I\br{\mbox{Pan}_m;t}$ is unimodal whose mode is in  $\brb{\rho_m+1,\rho_m+2}$. On the other hand, the mode of $tI\br{C_m;t}$ is $\rho_m+1$. Hence, applying \Cref{unimodal_mode_unit} again, we have that $I\br{T_{m,3};t}$ is also unimodal whose mode belongs to $\brb{\rho_m+1,\rho_m+2}$.

     We conclude that $I\br{T_{m,3};t}$ is unimodal with the mode belongs to $\brb{\rho_m,\rho_m+1,\rho_m+2}$.
\end{proof}

\begin{prop}\label{T_4,n}
Recall the mode $\lambda_n$ of the independence polynomial of $P_n$. Then  $I\br{T_{4,n};t}$ is unimodal with the mode belongs to $\brb{\lambda_{n+2},\lambda_{n+2}+1}$, for all $n\ge 5$.
\end{prop}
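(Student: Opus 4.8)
The plan is to mimic the structure of the proofs of \Cref{modeT_{m,n}} and the $T_{m,3}$ proposition, using the deletion--contraction identity of \Cref{comp_inde_graph}(i) to reduce $I(T_{4,n};t)$ to independence polynomials of \emph{paths}, and then to control the modes of those pieces via \Cref{compare_modes}(i) together with \Cref{unimodal_mode_unit}. Concretely, since $T_{4,n}$ is a $4$-cycle with a pendant path of length $n$ attached at a vertex, the first step is to pick a vertex of the cycle (say one of the two cycle vertices not meeting the bridge) and apply the identity. Deleting that vertex breaks the cycle and should leave a path; deleting its closed neighborhood should leave a shorter path (or disjoint union of a path and an isolated structure, handled by \Cref{comp_inde_graph}(ii)). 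The goal of this reduction is to express $I(T_{4,n};t)$ as a nonnegative-coefficient combination, with small explicit shifts in $t$, of independence polynomials $I(P_k;t)$ whose modes are the $\lambda_k$ already studied in \Cref{thm_Pn} and \Cref{compare_modes}.

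First I would carry out the deletion--contraction carefully to obtain an identity of the shape
\begin{equation*}
I(T_{4,n};t)=I(P_{a};t)+t\,I(P_{b};t)+\cdots,
\end{equation*}
where $a,b$ are linear in $n$ and the summands are each a monomial multiple of a path independence polynomial. The natural target, given that the claimed mode lies in $\{\lambda_{n+2},\lambda_{n+2}+1\}$, is to arrange the reduction so that the dominant term has mode exactly $\lambda_{n+2}$; this suggests that after the reduction the relevant path lengths differ from $n$ by an additive constant near $2$. I would then invoke \Cref{compare_modes}(i), specifically the inequalities $\lambda_{n+1}\ge\lambda_n$ and $\lambda_{n+3}-1\le\lambda_n\le\lambda_{n+4}-1$, to verify that the modes of the various summands (after accounting for the extra factors of $t$, which shift a mode up by $1$) all lie within distance $1$ of one another. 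Once that pairwise closeness is established, \Cref{unimodal_mode_unit} can be applied repeatedly, two summands at a time, to conclude both unimodality and that the mode of the sum lands in the predicted two-element set.

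The main obstacle I anticipate is precisely the bookkeeping of the modes: \Cref{unimodal_mode_unit} requires the hypothesis $|p-q|\le 1$ at \emph{each} application, so I must check that the modes of consecutive partial sums never drift apart by more than $1$, and that multiplying a polynomial by $t$ raises its mode by exactly $1$ (true because $t$-multiplication is just an index shift of a unimodal sequence). The delicate case is likely to be distinguishing whether the final mode is $\lambda_{n+2}$ or $\lambda_{n+2}+1$, which will hinge on a strict-versus-nonstrict comparison of the Hilbert-function values at the critical degree, exactly as in the $a_p+b_p\lessgtr a_{p+1}+b_{p+1}$ dichotomy in the proof of \Cref{unimodal_mode_unit}. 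A secondary subtlety is that the asserted range $\{\lambda_{n+2},\lambda_{n+2}+1\}$ is stated for $n\ge 5$, so I expect the clean mode inequalities of \Cref{compare_modes}(i) to apply uniformly in this range; any small-$n$ exceptions would be absorbed by the hypothesis $n\ge 5$ rather than needing separate treatment. If the direct vertex choice on the cycle does not produce path summands cleanly, the fallback is to instead delete along the pendant path (as was done for $T_{m,2}$ and $T_{m,3}$ by contracting at an endpoint-adjacent vertex), which would reduce $T_{4,n}$ to $T_{4,n-1}$-type and $C_4$-type pieces and set up an induction on $n$ instead.
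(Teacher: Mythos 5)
Your reduction step is correct and matches the paper's: deleting a cycle vertex adjacent to the attachment vertex gives $I\br{T_{4,n};t}=I\br{P_{n+3};t}+t(1+t)I\br{P_n;t}$, which is exactly where the paper starts. The gap is in the second half of your plan. The three summands $I\br{P_{n+3};t}$, $tI\br{P_n;t}$ and $t^2I\br{P_n;t}$ have modes $\lambda_{n+3}$, $\lambda_n+1$ and $\lambda_n+2$, and \Cref{compare_modes}(i) only gives $\lambda_{n+3}\in\brb{\lambda_n,\lambda_n+1}$. When $\lambda_{n+3}=\lambda_n$ the modes spread over an interval of length $2$, and no ordering of pairwise applications of \Cref{unimodal_mode_unit} is guaranteed to satisfy the hypothesis $\abs{p-q}\leq 1$: combining $I\br{P_{n+3}}$ with $tI\br{P_n}$ first can leave a partial sum of mode $\lambda_n$ facing $t^2I\br{P_n}$ of mode $\lambda_n+2$, while combining $t(1+t)I\br{P_n}$ first can leave mode $\lambda_n+2$ facing $I\br{P_{n+3}}$ of mode $\lambda_n$. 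You correctly flag this as the main obstacle, but you do not supply the idea that overcomes it; moreover, the conclusion that the mode lands specifically in $\brb{\lambda_{n+2},\lambda_{n+2}+1}$, rather than merely somewhere in $\brb{\lambda_n,\lambda_n+1,\lambda_n+2}$, is not reachable from \Cref{compare_modes}(i) alone.

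The missing idea is a regrouping into summands whose unimodality and modes are known independently of \Cref{unimodal_mode_unit}. Using the path recursion $I\br{P_{n+3};t}=I\br{P_{n+2};t}+tI\br{P_{n+1};t}$ and the cycle identity $I\br{C_{n+3};t}=I\br{P_{n+2};t}+tI\br{P_n;t}$, the paper rewrites
\[
I\br{T_{4,n};t}=\br{I\br{P_{n+2};t}+tI\br{P_n;t}}+t\br{I\br{P_{n+1};t}+tI\br{P_n;t}}=I\br{C_{n+3};t}+tI\br{P_{n+2};t}.
\]
Now only two unimodal summands remain, with modes $\rho_{n+3}$ and $\lambda_{n+2}+1$, and \Cref{compare_modes}(ii) --- not part (i) --- pins down $\rho_{n+3}\in\brb{\lambda_{n+2},\lambda_{n+2}+1}$; a single application of \Cref{unimodal_mode_unit} (or none, when the two modes coincide) then yields both unimodality and the stated location of the mode. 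Your fallback of inducting along the pendant path would face the same mode-bookkeeping problem unless a regrouping of this kind is found.
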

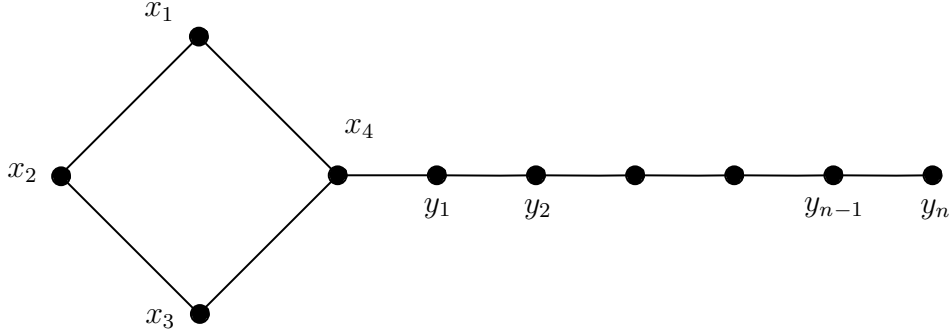
\begin{figure}[!ht]
    \centering
\tikzset{every picture/.style={line width=0.75pt}} 
\begin{tikzpicture}[x=0.75pt,y=0.75pt,yscale=-1,xscale=1]
\draw  [line width=3] [line join = round][line cap = round] (310,120.6) .. controls (310,120.6) and (310,120.6) .. (310,120.6) ;
\draw    (360,120) -- (391.76,120.18) -- (410,120) ;
\draw  [line width=3] [line join = round][line cap = round] (410,120) .. controls (410,120) and (410,120) .. (410,120) ;
\draw  [line width=3] [line join = round][line cap = round] (170.77,120.84) .. controls (170.77,120.84) and (170.77,120.84) .. (170.77,120.84) ;
\draw    (310,120) -- (360,120) ;
\draw  [line width=3] [line join = round][line cap = round] (360,120) .. controls (360,120) and (360,120) .. (360,120) ;
\draw    (460,120) -- (491.76,120.18) -- (510,120) ;
\draw  [line width=3] [line join = round][line cap = round] (610,120) .. controls (610,120) and (610,120) .. (610,120) ;
\draw    (410,120) -- (441.76,120.18) -- (460,120) ;
\draw  [line width=3] [line join = round][line cap = round] (560,120) .. controls (560,120) and (560,120) .. (560,120) ;
\draw  [line width=3] [line join = round][line cap = round] (460,120) .. controls (460,120) and (460,120) .. (460,120) ;
\draw  [line width=3] [line join = round][line cap = round] (511,120) .. controls (511,120) and (511,120) .. (511,120) ;
\draw    (510,120) -- (541.76,120.18) -- (560,120) ;
\draw    (560,120) -- (591.76,120.18) -- (610,120) ;
\draw   (364.5,120) .. controls (364.5,117.51) and (362.49,115.5) .. (360,115.5) .. controls (357.51,115.5) and (355.5,117.51) .. (355.5,120) .. controls (355.5,122.49) and (357.51,124.5) .. (360,124.5) .. controls (362.49,124.5) and (364.5,122.49) .. (364.5,120) -- cycle ;
\draw  [line width=3] [line join = round][line cap = round] (359,119.2) .. controls (359.67,119.2) and (360.53,119.67) .. (361,119.2) .. controls (361.24,118.96) and (361.3,118.35) .. (361,118.2) .. controls (360.16,117.78) and (358.33,119.53) .. (359,120.2) .. controls (360.33,121.53) and (362.33,119.53) .. (361,118.2) .. controls (357.78,114.98) and (359.97,122.17) .. (358,120.2) .. controls (356.22,118.42) and (361.78,119.98) .. (360,118.2) .. controls (359.03,117.23) and (358.39,120.97) .. (359,122.2) .. controls (359.3,122.8) and (360.53,122.67) .. (361,122.2) .. controls (361.47,121.73) and (361.67,120.2) .. (361,120.2) ;
\draw  [line width=3] [line join = round][line cap = round] (363,120.2) .. controls (362.67,120.2) and (362.33,120.2) .. (362,120.2) ;
\draw  [line width=3] [line join = round][line cap = round] (362.28,118.34) .. controls (362.28,118.34) and (362.28,118.34) .. (362.28,118.34) ;
\draw  [line width=3] [line join = round][line cap = round] (358.28,118.34) .. controls (358.28,118.34) and (358.28,118.34) .. (358.28,118.34) ;
\draw  [line width=3] [line join = round][line cap = round] (358.28,121.34) .. controls (358.28,121.34) and (358.28,121.34) .. (358.28,121.34) ;
\draw  [line width=3] [line join = round][line cap = round] (359.28,119.34) .. controls (359.28,119.34) and (359.28,119.34) .. (359.28,119.34) ;
\draw   (414.5,120) .. controls (414.5,117.51) and (412.49,115.5) .. (410,115.5) .. controls (407.51,115.5) and (405.5,117.51) .. (405.5,120) .. controls (405.5,122.49) and (407.51,124.5) .. (410,124.5) .. controls (412.49,124.5) and (414.5,122.49) .. (414.5,120) -- cycle ;
\draw  [line width=3] [line join = round][line cap = round] (409,119.2) .. controls (409.67,119.2) and (410.53,119.67) .. (411,119.2) .. controls (411.24,118.96) and (411.3,118.35) .. (411,118.2) .. controls (410.16,117.78) and (408.33,119.53) .. (409,120.2) .. controls (410.33,121.53) and (412.33,119.53) .. (411,118.2) .. controls (407.78,114.98) and (409.97,122.17) .. (408,120.2) .. controls (406.22,118.42) and (411.78,119.98) .. (410,118.2) .. controls (409.03,117.23) and (408.39,120.97) .. (409,122.2) .. controls (409.3,122.8) and (410.53,122.67) .. (411,122.2) .. controls (411.47,121.73) and (411.67,120.2) .. (411,120.2) ;
\draw  [line width=3] [line join = round][line cap = round] (413,120.2) .. controls (412.67,120.2) and (412.33,120.2) .. (412,120.2) ;
\draw  [line width=3] [line join = round][line cap = round] (412.28,118.34) .. controls (412.28,118.34) and (412.28,118.34) .. (412.28,118.34) ;
\draw  [line width=3] [line join = round][line cap = round] (408.28,118.34) .. controls (408.28,118.34) and (408.28,118.34) .. (408.28,118.34) ;
\draw  [line width=3] [line join = round][line cap = round] (408.28,121.34) .. controls (408.28,121.34) and (408.28,121.34) .. (408.28,121.34) ;
\draw  [line width=3] [line join = round][line cap = round] (409.28,119.34) .. controls (409.28,119.34) and (409.28,119.34) .. (409.28,119.34) ;
\draw   (464.5,120) .. controls (464.5,117.51) and (462.49,115.5) .. (460,115.5) .. controls (457.51,115.5) and (455.5,117.51) .. (455.5,120) .. controls (455.5,122.49) and (457.51,124.5) .. (460,124.5) .. controls (462.49,124.5) and (464.5,122.49) .. (464.5,120) -- cycle ;
\draw  [line width=3] [line join = round][line cap = round] (459,119.2) .. controls (459.67,119.2) and (460.53,119.67) .. (461,119.2) .. controls (461.24,118.96) and (461.3,118.35) .. (461,118.2) .. controls (460.16,117.78) and (458.33,119.53) .. (459,120.2) .. controls (460.33,121.53) and (462.33,119.53) .. (461,118.2) .. controls (457.78,114.98) and (459.97,122.17) .. (458,120.2) .. controls (456.22,118.42) and (461.78,119.98) .. (460,118.2) .. controls (459.03,117.23) and (458.39,120.97) .. (459,122.2) .. controls (459.3,122.8) and (460.53,122.67) .. (461,122.2) .. controls (461.47,121.73) and (461.67,120.2) .. (461,120.2) ;
\draw  [line width=3] [line join = round][line cap = round] (463,120.2) .. controls (462.67,120.2) and (462.33,120.2) .. (462,120.2) ;
\draw  [line width=3] [line join = round][line cap = round] (462.28,118.34) .. controls (462.28,118.34) and (462.28,118.34) .. (462.28,118.34) ;
\draw  [line width=3] [line join = round][line cap = round] (458.28,118.34) .. controls (458.28,118.34) and (458.28,118.34) .. (458.28,118.34) ;
\draw  [line width=3] [line join = round][line cap = round] (458.28,121.34) .. controls (458.28,121.34) and (458.28,121.34) .. (458.28,121.34) ;
\draw  [line width=3] [line join = round][line cap = round] (459.28,119.34) .. controls (459.28,119.34) and (459.28,119.34) .. (459.28,119.34) ;
\draw  [line width=3] [line join = round][line cap = round] (610,120) .. controls (610,120) and (610,120) .. (610,120) ;
\draw   (614.5,120) .. controls (614.5,117.51) and (612.49,115.5) .. (610,115.5) .. controls (607.51,115.5) and (605.5,117.51) .. (605.5,120) .. controls (605.5,122.49) and (607.51,124.5) .. (610,124.5) .. controls (612.49,124.5) and (614.5,122.49) .. (614.5,120) -- cycle ;
\draw  [line width=3] [line join = round][line cap = round] (609,119.2) .. controls (609.67,119.2) and (610.53,119.67) .. (611,119.2) .. controls (611.24,118.96) and (611.3,118.35) .. (611,118.2) .. controls (610.16,117.78) and (608.33,119.53) .. (609,120.2) .. controls (610.33,121.53) and (612.33,119.53) .. (611,118.2) .. controls (607.78,114.98) and (609.97,122.17) .. (608,120.2) .. controls (606.22,118.42) and (611.78,119.98) .. (610,118.2) .. controls (609.03,117.23) and (608.39,120.97) .. (609,122.2) .. controls (609.3,122.8) and (610.53,122.67) .. (611,122.2) .. controls (611.47,121.73) and (611.67,120.2) .. (611,120.2) ;
\draw  [line width=3] [line join = round][line cap = round] (613,120.2) .. controls (612.67,120.2) and (612.33,120.2) .. (612,120.2) ;
\draw  [line width=3] [line join = round][line cap = round] (612.28,118.34) .. controls (612.28,118.34) and (612.28,118.34) .. (612.28,118.34) ;
\draw  [line width=3] [line join = round][line cap = round] (608.28,118.34) .. controls (608.28,118.34) and (608.28,118.34) .. (608.28,118.34) ;
\draw  [line width=3] [line join = round][line cap = round] (608.28,121.34) .. controls (608.28,121.34) and (608.28,121.34) .. (608.28,121.34) ;
\draw  [line width=3] [line join = round][line cap = round] (609.28,119.34) .. controls (609.28,119.34) and (609.28,119.34) .. (609.28,119.34) ;
\draw  [line width=3] [line join = round][line cap = round] (510,120) .. controls (510,120) and (510,120) .. (510,120) ;
\draw   (514.5,120) .. controls (514.5,117.51) and (512.49,115.5) .. (510,115.5) .. controls (507.51,115.5) and (505.5,117.51) .. (505.5,120) .. controls (505.5,122.49) and (507.51,124.5) .. (510,124.5) .. controls (512.49,124.5) and (514.5,122.49) .. (514.5,120) -- cycle ;
\draw  [line width=3] [line join = round][line cap = round] (509,119.2) .. controls (509.67,119.2) and (510.53,119.67) .. (511,119.2) .. controls (511.24,118.96) and (511.3,118.35) .. (511,118.2) .. controls (510.16,117.78) and (508.33,119.53) .. (509,120.2) .. controls (510.33,121.53) and (512.33,119.53) .. (511,118.2) .. controls (507.78,114.98) and (509.97,122.17) .. (508,120.2) .. controls (506.22,118.42) and (511.78,119.98) .. (510,118.2) .. controls (509.03,117.23) and (508.39,120.97) .. (509,122.2) .. controls (509.3,122.8) and (510.53,122.67) .. (511,122.2) .. controls (511.47,121.73) and (511.67,120.2) .. (511,120.2) ;
\draw  [line width=3] [line join = round][line cap = round] (513,120.2) .. controls (512.67,120.2) and (512.33,120.2) .. (512,120.2) ;
\draw  [line width=3] [line join = round][line cap = round] (512.28,118.34) .. controls (512.28,118.34) and (512.28,118.34) .. (512.28,118.34) ;
\draw  [line width=3] [line join = round][line cap = round] (508.28,118.34) .. controls (508.28,118.34) and (508.28,118.34) .. (508.28,118.34) ;
\draw  [line width=3] [line join = round][line cap = round] (508.28,121.34) .. controls (508.28,121.34) and (508.28,121.34) .. (508.28,121.34) ;
\draw  [line width=3] [line join = round][line cap = round] (509.28,119.34) .. controls (509.28,119.34) and (509.28,119.34) .. (509.28,119.34) ;
\draw  [line width=3] [line join = round][line cap = round] (560,120) .. controls (560,120) and (560,120) .. (560,120) ;
\draw   (564.5,120) .. controls (564.5,117.51) and (562.49,115.5) .. (560,115.5) .. controls (557.51,115.5) and (555.5,117.51) .. (555.5,120) .. controls (555.5,122.49) and (557.51,124.5) .. (560,124.5) .. controls (562.49,124.5) and (564.5,122.49) .. (564.5,120) -- cycle ;
\draw  [line width=3] [line join = round][line cap = round] (559,119.2) .. controls (559.67,119.2) and (560.53,119.67) .. (561,119.2) .. controls (561.24,118.96) and (561.3,118.35) .. (561,118.2) .. controls (560.16,117.78) and (558.33,119.53) .. (559,120.2) .. controls (560.33,121.53) and (562.33,119.53) .. (561,118.2) .. controls (557.78,114.98) and (559.97,122.17) .. (558,120.2) .. controls (556.22,118.42) and (561.78,119.98) .. (560,118.2) .. controls (559.03,117.23) and (558.39,120.97) .. (559,122.2) .. controls (559.3,122.8) and (560.53,122.67) .. (561,122.2) .. controls (561.47,121.73) and (561.67,120.2) .. (561,120.2) ;
\draw  [line width=3] [line join = round][line cap = round] (563,120.2) .. controls (562.67,120.2) and (562.33,120.2) .. (562,120.2) ;
\draw  [line width=3] [line join = round][line cap = round] (562.28,118.34) .. controls (562.28,118.34) and (562.28,118.34) .. (562.28,118.34) ;
\draw  [line width=3] [line join = round][line cap = round] (558.28,118.34) .. controls (558.28,118.34) and (558.28,118.34) .. (558.28,118.34) ;
\draw  [line width=3] [line join = round][line cap = round] (558.28,121.34) .. controls (558.28,121.34) and (558.28,121.34) .. (558.28,121.34) ;
\draw  [line width=3] [line join = round][line cap = round] (559.28,119.34) .. controls (559.28,119.34) and (559.28,119.34) .. (559.28,119.34) ;
\draw  [line width=3] [line join = round][line cap = round] (310,120) .. controls (310,120) and (310,120) .. (310,120) ;
\draw   (314.5,120) .. controls (314.5,117.51) and (312.49,115.5) .. (310,115.5) .. controls (307.51,115.5) and (305.5,117.51) .. (305.5,120) .. controls (305.5,122.49) and (307.51,124.5) .. (310,124.5) .. controls (312.49,124.5) and (314.5,122.49) .. (314.5,120) -- cycle ;
\draw  [line width=3] [line join = round][line cap = round] (309,119.2) .. controls (309.67,119.2) and (310.53,119.67) .. (311,119.2) .. controls (311.24,118.96) and (311.3,118.35) .. (311,118.2) .. controls (310.16,117.78) and (308.33,119.53) .. (309,120.2) .. controls (310.33,121.53) and (312.33,119.53) .. (311,118.2) .. controls (307.78,114.98) and (309.97,122.17) .. (308,120.2) .. controls (306.22,118.42) and (311.78,119.98) .. (310,118.2) .. controls (309.03,117.23) and (308.39,120.97) .. (309,122.2) .. controls (309.3,122.8) and (310.53,122.67) .. (311,122.2) .. controls (311.47,121.73) and (311.67,120.2) .. (311,120.2) ;
\draw  [line width=3] [line join = round][line cap = round] (313,120.2) .. controls (312.67,120.2) and (312.33,120.2) .. (312,120.2) ;
\draw  [line width=3] [line join = round][line cap = round] (312.27,118.34) .. controls (312.27,118.34) and (312.27,118.34) .. (312.27,118.34) ;
\draw  [line width=3] [line join = round][line cap = round] (308.27,118.34) .. controls (308.27,118.34) and (308.27,118.34) .. (308.27,118.34) ;
\draw  [line width=3] [line join = round][line cap = round] (308.27,121.34) .. controls (308.27,121.34) and (308.27,121.34) .. (308.27,121.34) ;
\draw  [line width=3] [line join = round][line cap = round] (309.27,119.34) .. controls (309.27,119.34) and (309.27,119.34) .. (309.27,119.34) ;
\draw  [line width=3] [line join = round][line cap = round] (170.5,120.5) .. controls (170.5,120.5) and (170.5,120.5) .. (170.5,120.5) ;
\draw   (175,120.5) .. controls (175,118.01) and (172.99,116) .. (170.5,116) .. controls (168.01,116) and (166,118.01) .. (166,120.5) .. controls (166,122.99) and (168.01,125) .. (170.5,125) .. controls (172.99,125) and (175,122.99) .. (175,120.5) -- cycle ;
\draw  [line width=3] [line join = round][line cap = round] (169.5,119.7) .. controls (170.17,119.7) and (171.03,120.17) .. (171.5,119.7) .. controls (171.74,119.46) and (171.8,118.85) .. (171.5,118.7) .. controls (170.66,118.28) and (168.83,120.03) .. (169.5,120.7) .. controls (170.83,122.03) and (172.83,120.03) .. (171.5,118.7) .. controls (168.28,115.48) and (170.47,122.67) .. (168.5,120.7) .. controls (166.72,118.92) and (172.28,120.48) .. (170.5,118.7) .. controls (169.53,117.73) and (168.89,121.47) .. (169.5,122.7) .. controls (169.8,123.3) and (171.03,123.17) .. (171.5,122.7) .. controls (171.97,122.23) and (172.17,120.7) .. (171.5,120.7) ;
\draw  [line width=3] [line join = round][line cap = round] (173.5,120.7) .. controls (173.17,120.7) and (172.83,120.7) .. (172.5,120.7) ;
\draw  [line width=3] [line join = round][line cap = round] (172.77,118.84) .. controls (172.77,118.84) and (172.77,118.84) .. (172.77,118.84) ;
\draw  [line width=3] [line join = round][line cap = round] (168.77,118.84) .. controls (168.77,118.84) and (168.77,118.84) .. (168.77,118.84) ;
\draw  [line width=3] [line join = round][line cap = round] (168.77,121.84) .. controls (168.77,121.84) and (168.77,121.84) .. (168.77,121.84) ;
\draw  [line width=3] [line join = round][line cap = round] (169.77,119.84) .. controls (169.77,119.84) and (169.77,119.84) .. (169.77,119.84) ;
\draw  [line width=3] [line join = round][line cap = round] (240.5,190.6) .. controls (240.5,190.6) and (240.5,190.6) .. (240.5,190.6) ;
\draw  [line width=3] [line join = round][line cap = round] (240.27,50.34) .. controls (240.27,50.34) and (240.27,50.34) .. (240.27,50.34) ;
\draw  [line width=3] [line join = round][line cap = round] (240.5,190) .. controls (240.5,190) and (240.5,190) .. (240.5,190) ;
\draw   (245,190) .. controls (245,187.51) and (242.99,185.5) .. (240.5,185.5) .. controls (238.01,185.5) and (236,187.51) .. (236,190) .. controls (236,192.49) and (238.01,194.5) .. (240.5,194.5) .. controls (242.99,194.5) and (245,192.49) .. (245,190) -- cycle ;
\draw  [line width=3] [line join = round][line cap = round] (239.5,189.2) .. controls (240.17,189.2) and (241.03,189.67) .. (241.5,189.2) .. controls (241.74,188.96) and (241.8,188.35) .. (241.5,188.2) .. controls (240.66,187.78) and (238.83,189.53) .. (239.5,190.2) .. controls (240.83,191.53) and (242.83,189.53) .. (241.5,188.2) .. controls (238.28,184.98) and (240.47,192.17) .. (238.5,190.2) .. controls (236.72,188.42) and (242.28,189.98) .. (240.5,188.2) .. controls (239.53,187.23) and (238.89,190.97) .. (239.5,192.2) .. controls (239.8,192.8) and (241.03,192.67) .. (241.5,192.2) .. controls (241.97,191.73) and (242.17,190.2) .. (241.5,190.2) ;
\draw  [line width=3] [line join = round][line cap = round] (243.5,190.2) .. controls (243.17,190.2) and (242.83,190.2) .. (242.5,190.2) ;
\draw  [line width=3] [line join = round][line cap = round] (242.77,188.34) .. controls (242.77,188.34) and (242.77,188.34) .. (242.77,188.34) ;
\draw  [line width=3] [line join = round][line cap = round] (238.77,188.34) .. controls (238.77,188.34) and (238.77,188.34) .. (238.77,188.34) ;
\draw  [line width=3] [line join = round][line cap = round] (238.77,191.34) .. controls (238.77,191.34) and (238.77,191.34) .. (238.77,191.34) ;
\draw  [line width=3] [line join = round][line cap = round] (239.77,189.34) .. controls (239.77,189.34) and (239.77,189.34) .. (239.77,189.34) ;
\draw  [line width=3] [line join = round][line cap = round] (240,50) .. controls (240,50) and (240,50) .. (240,50) ;
\draw   (244.5,50) .. controls (244.5,47.51) and (242.49,45.5) .. (240,45.5) .. controls (237.51,45.5) and (235.5,47.51) .. (235.5,50) .. controls (235.5,52.49) and (237.51,54.5) .. (240,54.5) .. controls (242.49,54.5) and (244.5,52.49) .. (244.5,50) -- cycle ;
\draw  [line width=3] [line join = round][line cap = round] (239,49.2) .. controls (239.67,49.2) and (240.53,49.67) .. (241,49.2) .. controls (241.24,48.96) and (241.3,48.35) .. (241,48.2) .. controls (240.16,47.78) and (238.33,49.53) .. (239,50.2) .. controls (240.33,51.53) and (242.33,49.53) .. (241,48.2) .. controls (237.78,44.98) and (239.97,52.17) .. (238,50.2) .. controls (236.22,48.42) and (241.78,49.98) .. (240,48.2) .. controls (239.03,47.23) and (238.39,50.97) .. (239,52.2) .. controls (239.3,52.8) and (240.53,52.67) .. (241,52.2) .. controls (241.47,51.73) and (241.67,50.2) .. (241,50.2) ;
\draw  [line width=3] [line join = round][line cap = round] (243,50.2) .. controls (242.67,50.2) and (242.33,50.2) .. (242,50.2) ;
\draw  [line width=3] [line join = round][line cap = round] (242.27,48.34) .. controls (242.27,48.34) and (242.27,48.34) .. (242.27,48.34) ;
\draw  [line width=3] [line join = round][line cap = round] (238.27,48.34) .. controls (238.27,48.34) and (238.27,48.34) .. (238.27,48.34) ;
\draw  [line width=3] [line join = round][line cap = round] (238.27,51.34) .. controls (238.27,51.34) and (238.27,51.34) .. (238.27,51.34) ;
\draw  [line width=3] [line join = round][line cap = round] (239.27,49.34) .. controls (239.27,49.34) and (239.27,49.34) .. (239.27,49.34) ;
\draw    (240,50) -- (310,120) ;
\draw    (240,50) -- (170,120) ;
\draw    (240,190) -- (170,120) ;
\draw    (310,120) -- (240,190) ;
\draw (312,90.4) node [anchor=north west][inner sep=0.75pt]    {$x_{4}$};
\draw (142,112.4) node [anchor=north west][inner sep=0.75pt]    {$x_{2}$};
\draw (352,129.9) node [anchor=north west][inner sep=0.75pt]    {$y_{1}$};
\draw (402.5,130.4) node [anchor=north west][inner sep=0.75pt]    {$y_{2}$};
\draw (543.76,129.58) node [anchor=north west][inner sep=0.75pt]    {$y_{n}{}_{-1}$};
\draw (602.5,130.4) node [anchor=north west][inner sep=0.75pt]    {$y_{n}$};
\draw (211,31.4) node [anchor=north west][inner sep=0.75pt]    {$x_{1}$};
\draw (211.5,186.4) node [anchor=north west][inner sep=0.75pt]    {$x_{3}$};
\end{tikzpicture}
    \caption{The tadpole $T_{4,n}$}
    \label{fig4}
\end{figure}
\begin{proof}
    From Proposition \ref{comp_inde_graph}, see Figure \ref{fig4}, we have
    \begin{align*}
        I\br{T_{4,n};t} &= I\br{T_{4,n}\setminus x_1;t}+ tI\br{T_{4,n}\setminus N\brc{x_1};t}\\
        &= I\br{P_{n+3};t}+ t(1+t)I\br{P_n;t}\\
        &= \br{I\br{P_{n+2};t}+tI\br{P_{n+1};t}}+ tI\br{P_n;t}+t^2I\br{P_n;t}\\
        &= \br{I\br{P_{n+2};t}+ tI\br{P_n;t}}+ t\br{I\br{P_{n+1};t}+ tI\br{P_n;t}}\\
        &= I\br{C_{n+3};t}+ tI\br{P_{n+2};t}.
    \end{align*}

Applying Proposition \ref{compare_modes}(ii), we have two following cases:\\
\underline{\textbf{Case 1.} $\rho_{n+3}=\lambda_{n+2}+1$.} Both of polynomials $I\br{C_{n+3};t}$ and $tI\br{P_{n+2};t}$ are unimodal with mode $\lambda_{n+2}+1$. Therefore $I\br{T_{4,n};t}$ is unimodal with the mode $\lambda_{n+2}+1$.\\
\underline{\textbf{Case 2.} $\rho_{n+3}=\lambda_{n+2}$.} $I\br{C_{n+3};t}$ has the mode $\rho_{n+3}=\lambda_{n+2}$, while $tI\br{P_{n+2};t}$ has the mode $\lambda_{n+2}+1$. Therefore, applying Lemma \ref{unimodal_mode_unit}, $I\br{T_{4,n};t}$ is unimodal whose mode beglongs to $\brb{\lambda_{n+2},\lambda_{n+2}+1}$.

In conclusion, we have $I\br{T_{4,n};t}$ is unimodal whose mode belongs to $\brb{\lambda_{n+2},\lambda_{n+2}+1}$.
\end{proof}

\begin{prop}
    $I\br{T_{5,n};t}$ is unimodal with the mode belongs to $\brb{\lambda_{n+2},\lambda_{n+2}+1,\lambda_{n+2}+2}$, for all $n\ge 5$.
\end{prop}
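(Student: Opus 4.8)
The plan is to imitate the proof of the preceding proposition on $T_{4,n}$: peel off a cycle vertex adjacent to the bridge, read off the resulting pieces as independence polynomials of paths, and then control the mode of the sum using \Cref{compare_modes} and \Cref{unimodal_mode_unit}.

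First I would apply \Cref{comp_inde_graph}(i) to the cycle vertex $x_1$ that is adjacent to the attachment vertex $x_5$. Deleting $x_1$ opens the $5$-cycle into the path $x_2,x_3,x_4,x_5,y_1,\dots,y_n$, so $T_{5,n}\setminus x_1=P_{n+4}$; deleting $N[x_1]=\{x_1,x_2,x_5\}$ severs the bridge and leaves $\{x_3,x_4\}$ together with $y_1,\dots,y_n$, so $T_{5,n}\setminus N[x_1]=P_2\sqcup P_n$. Using \Cref{comp_inde_graph}(ii) and $I(P_2;t)=1+2t$ this gives
\begin{equation*}
I\br{T_{5,n};t}=I\br{P_{n+4};t}+t(1+2t)I\br{P_n;t}=I\br{P_{n+4};t}+tI\br{P_n;t}+2t^2I\br{P_n;t}.
\end{equation*}
Thus $I(T_{5,n};t)$ is a sum of three unimodal polynomials (independence polynomials of paths are unimodal), with modes $\lambda_{n+4}$, $\lambda_n+1$ and $\lambda_n+2$, respectively.

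Next I would pin down where these three modes sit. From \Cref{compare_modes}(i) one has $\lambda_n\le \lambda_{n+4}-1$, hence $\lambda_{n+4}\ge \lambda_n+1$; and $\lambda_{n+4}\le \lambda_{n+1}+1\le \lambda_n+2$ (monotonicity together with $\lambda_{n+3}\le \lambda_n+1$). So $\lambda_{n+4}\in\{\lambda_n+1,\lambda_n+2\}$, and therefore all three modes lie in the two-element window $\{\lambda_n+1,\lambda_n+2\}$. I would then combine the summands in an order that pairs two equal-mode pieces first, so that the equality case of \Cref{unimodal_mode_unit} yields an \emph{exact} intermediate mode before the last addition. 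Concretely: if $\lambda_{n+4}=\lambda_n+1$, add $I(P_{n+4};t)$ and $tI(P_n;t)$ first (both of mode $\lambda_n+1$) and then $2t^2I(P_n;t)$; if $\lambda_{n+4}=\lambda_n+2$, add $I(P_{n+4};t)$ and $2t^2I(P_n;t)$ first (both of mode $\lambda_n+2$) and then $tI(P_n;t)$. In either case two applications of \Cref{unimodal_mode_unit} show $I(T_{5,n};t)$ is unimodal with mode in $\{\lambda_n+1,\lambda_n+2\}$. Finally, to match the stated range I would translate from $\lambda_n$ to $\lambda_{n+2}$: by \Cref{compare_modes}(i) (monotonicity and $\lambda_{n+2}\le \lambda_n+1$) we have $\lambda_{n+2}-1\le \lambda_n\le \lambda_{n+2}$, whence $\{\lambda_n+1,\lambda_n+2\}\subseteq\{\lambda_{n+2},\lambda_{n+2}+1,\lambda_{n+2}+2\}$.

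The main obstacle is precisely the mode bookkeeping in the combination step. Unlike the $T_{4,n}$ case, there are three summands, and they can peak at two different degrees, so a single application of the pairwise lemma is not enough; moreover a careless order of summation would let the estimate drift up to $\lambda_n+3$, since the bound $\{\min(p,q),\min(p,q)+1\}$ in \Cref{unimodal_mode_unit} is lossy when $p=q$. Pairing the coincident modes first (and using that $p=q$ forces the sum to have mode exactly $p$, as in the proof of \Cref{unimodal_mode_unit}) is what keeps everything inside the window $\{\lambda_n+1,\lambda_n+2\}$. Alternatively, one can bypass the ordering issue with the direct observation that any finite sum of unimodal polynomials whose modes all lie in $\{a,a+1\}$ is again unimodal with mode in $\{a,a+1\}$, because the coefficient sequence of the sum is non-decreasing up to degree $a$ and non-increasing from degree $a+1$. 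The one genuinely nontrivial arithmetic point that must be verified carefully is the upper bound $\lambda_{n+4}\le \lambda_n+2$, which is what guarantees that the three modes really do fit into a single window of width one.
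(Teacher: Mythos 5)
Your proposal is correct. You start from the same deletion at $x_1$ and the same identity $I\br{T_{5,n};t}=I\br{P_{n+4};t}+t(1+2t)I\br{P_n;t}$ as the paper, but from that point the two arguments diverge. The paper regroups the right-hand side as $I\br{T_{4,n};t}+tI\br{C_{n+3};t}$, so that it can quote the mode window already established for $I\br{T_{4,n};t}$ and split into cases according to whether $\rho_{n+3}$ equals $\lambda_{n+2}$ or $\lambda_{n+2}+1$ (via \Cref{compare_modes}(ii)); a single application of \Cref{unimodal_mode_unit} then finishes each case. You instead keep the three path summands, show via \Cref{compare_modes}(i) that their modes $\lambda_{n+4}$, $\lambda_n+1$, $\lambda_n+2$ all lie in the width-one window $\brb{\lambda_n+1,\lambda_n+2}$, and apply \Cref{unimodal_mode_unit} twice, pairing the two equal-mode summands first so that the intermediate mode is exact and the estimate cannot drift to $\lambda_n+3$. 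Both routes are valid: the paper's is shorter because it recycles the $T_{4,n}$ proposition and the cycle--path mode comparison, while yours is more self-contained (it needs only facts about paths) and in fact yields the slightly sharper window $\brb{\lambda_n+1,\lambda_n+2}$, which you correctly translate into the stated three-element set using $\lambda_{n+2}-1\le\lambda_n\le\lambda_{n+2}$. Your closing ``direct observation'' is also sound, provided one adds that at least one summand peaks at the lower end of the window (or else all peak at the upper end), which supplies the strict increase required by the paper's definition of mode.
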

\begin{figure}[!ht]
    \centering
\tikzset{every picture/.style={line width=0.75pt}} 
\begin{tikzpicture}[x=0.75pt,y=0.75pt,yscale=-1,xscale=1]
\draw  [line width=3] [line join = round][line cap = round] (310,120.6) .. controls (310,120.6) and (310,120.6) .. (310,120.6) ;
\draw    (360,120) -- (391.76,120.18) -- (410,120) ;
\draw  [line width=3] [line join = round][line cap = round] (410,120) .. controls (410,120) and (410,120) .. (410,120) ;
\draw  [line width=3] [line join = round][line cap = round] (204.77,86.84) .. controls (204.77,86.84) and (204.77,86.84) .. (204.77,86.84) ;
\draw    (310,120) -- (360,120) ;
\draw  [line width=3] [line join = round][line cap = round] (360,120) .. controls (360,120) and (360,120) .. (360,120) ;
\draw  [line width=3] [line join = round][line cap = round] (268.27,174.34) .. controls (268.27,174.34) and (268.27,174.34) .. (268.27,174.34) ;
\draw    (460,120) -- (491.76,120.18) -- (510,120) ;
\draw  [line width=3] [line join = round][line cap = round] (610,120) .. controls (610,120) and (610,120) .. (610,120) ;
\draw    (410,120) -- (441.76,120.18) -- (460,120) ;
\draw  [line width=3] [line join = round][line cap = round] (560,120) .. controls (560,120) and (560,120) .. (560,120) ;
\draw  [line width=3] [line join = round][line cap = round] (460,120) .. controls (460,120) and (460,120) .. (460,120) ;
\draw  [line width=3] [line join = round][line cap = round] (511,120) .. controls (511,120) and (511,120) .. (511,120) ;
\draw    (510,120) -- (541.76,120.18) -- (560,120) ;
\draw    (560,120) -- (591.76,120.18) -- (610,120) ;
\draw   (364.5,120) .. controls (364.5,117.51) and (362.49,115.5) .. (360,115.5) .. controls (357.51,115.5) and (355.5,117.51) .. (355.5,120) .. controls (355.5,122.49) and (357.51,124.5) .. (360,124.5) .. controls (362.49,124.5) and (364.5,122.49) .. (364.5,120) -- cycle ;
\draw  [line width=3] [line join = round][line cap = round] (359,119.2) .. controls (359.67,119.2) and (360.53,119.67) .. (361,119.2) .. controls (361.24,118.96) and (361.3,118.35) .. (361,118.2) .. controls (360.16,117.78) and (358.33,119.53) .. (359,120.2) .. controls (360.33,121.53) and (362.33,119.53) .. (361,118.2) .. controls (357.78,114.98) and (359.97,122.17) .. (358,120.2) .. controls (356.22,118.42) and (361.78,119.98) .. (360,118.2) .. controls (359.03,117.23) and (358.39,120.97) .. (359,122.2) .. controls (359.3,122.8) and (360.53,122.67) .. (361,122.2) .. controls (361.47,121.73) and (361.67,120.2) .. (361,120.2) ;
\draw  [line width=3] [line join = round][line cap = round] (363,120.2) .. controls (362.67,120.2) and (362.33,120.2) .. (362,120.2) ;
\draw  [line width=3] [line join = round][line cap = round] (362.28,118.34) .. controls (362.28,118.34) and (362.28,118.34) .. (362.28,118.34) ;
\draw  [line width=3] [line join = round][line cap = round] (358.28,118.34) .. controls (358.28,118.34) and (358.28,118.34) .. (358.28,118.34) ;
\draw  [line width=3] [line join = round][line cap = round] (358.28,121.34) .. controls (358.28,121.34) and (358.28,121.34) .. (358.28,121.34) ;
\draw  [line width=3] [line join = round][line cap = round] (359.28,119.34) .. controls (359.28,119.34) and (359.28,119.34) .. (359.28,119.34) ;
\draw   (414.5,120) .. controls (414.5,117.51) and (412.49,115.5) .. (410,115.5) .. controls (407.51,115.5) and (405.5,117.51) .. (405.5,120) .. controls (405.5,122.49) and (407.51,124.5) .. (410,124.5) .. controls (412.49,124.5) and (414.5,122.49) .. (414.5,120) -- cycle ;
\draw  [line width=3] [line join = round][line cap = round] (409,119.2) .. controls (409.67,119.2) and (410.53,119.67) .. (411,119.2) .. controls (411.24,118.96) and (411.3,118.35) .. (411,118.2) .. controls (410.16,117.78) and (408.33,119.53) .. (409,120.2) .. controls (410.33,121.53) and (412.33,119.53) .. (411,118.2) .. controls (407.78,114.98) and (409.97,122.17) .. (408,120.2) .. controls (406.22,118.42) and (411.78,119.98) .. (410,118.2) .. controls (409.03,117.23) and (408.39,120.97) .. (409,122.2) .. controls (409.3,122.8) and (410.53,122.67) .. (411,122.2) .. controls (411.47,121.73) and (411.67,120.2) .. (411,120.2) ;
\draw  [line width=3] [line join = round][line cap = round] (413,120.2) .. controls (412.67,120.2) and (412.33,120.2) .. (412,120.2) ;
\draw  [line width=3] [line join = round][line cap = round] (412.28,118.34) .. controls (412.28,118.34) and (412.28,118.34) .. (412.28,118.34) ;
\draw  [line width=3] [line join = round][line cap = round] (408.28,118.34) .. controls (408.28,118.34) and (408.28,118.34) .. (408.28,118.34) ;
\draw  [line width=3] [line join = round][line cap = round] (408.28,121.34) .. controls (408.28,121.34) and (408.28,121.34) .. (408.28,121.34) ;
\draw  [line width=3] [line join = round][line cap = round] (409.28,119.34) .. controls (409.28,119.34) and (409.28,119.34) .. (409.28,119.34) ;
\draw   (464.5,120) .. controls (464.5,117.51) and (462.49,115.5) .. (460,115.5) .. controls (457.51,115.5) and (455.5,117.51) .. (455.5,120) .. controls (455.5,122.49) and (457.51,124.5) .. (460,124.5) .. controls (462.49,124.5) and (464.5,122.49) .. (464.5,120) -- cycle ;
\draw  [line width=3] [line join = round][line cap = round] (459,119.2) .. controls (459.67,119.2) and (460.53,119.67) .. (461,119.2) .. controls (461.24,118.96) and (461.3,118.35) .. (461,118.2) .. controls (460.16,117.78) and (458.33,119.53) .. (459,120.2) .. controls (460.33,121.53) and (462.33,119.53) .. (461,118.2) .. controls (457.78,114.98) and (459.97,122.17) .. (458,120.2) .. controls (456.22,118.42) and (461.78,119.98) .. (460,118.2) .. controls (459.03,117.23) and (458.39,120.97) .. (459,122.2) .. controls (459.3,122.8) and (460.53,122.67) .. (461,122.2) .. controls (461.47,121.73) and (461.67,120.2) .. (461,120.2) ;
\draw  [line width=3] [line join = round][line cap = round] (463,120.2) .. controls (462.67,120.2) and (462.33,120.2) .. (462,120.2) ;
\draw  [line width=3] [line join = round][line cap = round] (462.28,118.34) .. controls (462.28,118.34) and (462.28,118.34) .. (462.28,118.34) ;
\draw  [line width=3] [line join = round][line cap = round] (458.28,118.34) .. controls (458.28,118.34) and (458.28,118.34) .. (458.28,118.34) ;
\draw  [line width=3] [line join = round][line cap = round] (458.28,121.34) .. controls (458.28,121.34) and (458.28,121.34) .. (458.28,121.34) ;
\draw  [line width=3] [line join = round][line cap = round] (459.28,119.34) .. controls (459.28,119.34) and (459.28,119.34) .. (459.28,119.34) ;
\draw  [line width=3] [line join = round][line cap = round] (610,120) .. controls (610,120) and (610,120) .. (610,120) ;
\draw   (614.5,120) .. controls (614.5,117.51) and (612.49,115.5) .. (610,115.5) .. controls (607.51,115.5) and (605.5,117.51) .. (605.5,120) .. controls (605.5,122.49) and (607.51,124.5) .. (610,124.5) .. controls (612.49,124.5) and (614.5,122.49) .. (614.5,120) -- cycle ;
\draw  [line width=3] [line join = round][line cap = round] (609,119.2) .. controls (609.67,119.2) and (610.53,119.67) .. (611,119.2) .. controls (611.24,118.96) and (611.3,118.35) .. (611,118.2) .. controls (610.16,117.78) and (608.33,119.53) .. (609,120.2) .. controls (610.33,121.53) and (612.33,119.53) .. (611,118.2) .. controls (607.78,114.98) and (609.97,122.17) .. (608,120.2) .. controls (606.22,118.42) and (611.78,119.98) .. (610,118.2) .. controls (609.03,117.23) and (608.39,120.97) .. (609,122.2) .. controls (609.3,122.8) and (610.53,122.67) .. (611,122.2) .. controls (611.47,121.73) and (611.67,120.2) .. (611,120.2) ;
\draw  [line width=3] [line join = round][line cap = round] (613,120.2) .. controls (612.67,120.2) and (612.33,120.2) .. (612,120.2) ;
\draw  [line width=3] [line join = round][line cap = round] (612.28,118.34) .. controls (612.28,118.34) and (612.28,118.34) .. (612.28,118.34) ;
\draw  [line width=3] [line join = round][line cap = round] (608.28,118.34) .. controls (608.28,118.34) and (608.28,118.34) .. (608.28,118.34) ;
\draw  [line width=3] [line join = round][line cap = round] (608.28,121.34) .. controls (608.28,121.34) and (608.28,121.34) .. (608.28,121.34) ;
\draw  [line width=3] [line join = round][line cap = round] (609.28,119.34) .. controls (609.28,119.34) and (609.28,119.34) .. (609.28,119.34) ;
\draw  [line width=3] [line join = round][line cap = round] (510,120) .. controls (510,120) and (510,120) .. (510,120) ;
%Shape: Circle [id:dp36708853697237753] 
\draw   (514.5,120) .. controls (514.5,117.51) and (512.49,115.5) .. (510,115.5) .. controls (507.51,115.5) and (505.5,117.51) .. (505.5,120) .. controls (505.5,122.49) and (507.51,124.5) .. (510,124.5) .. controls (512.49,124.5) and (514.5,122.49) .. (514.5,120) -- cycle ;
\draw  [line width=3] [line join = round][line cap = round] (509,119.2) .. controls (509.67,119.2) and (510.53,119.67) .. (511,119.2) .. controls (511.24,118.96) and (511.3,118.35) .. (511,118.2) .. controls (510.16,117.78) and (508.33,119.53) .. (509,120.2) .. controls (510.33,121.53) and (512.33,119.53) .. (511,118.2) .. controls (507.78,114.98) and (509.97,122.17) .. (508,120.2) .. controls (506.22,118.42) and (511.78,119.98) .. (510,118.2) .. controls (509.03,117.23) and (508.39,120.97) .. (509,122.2) .. controls (509.3,122.8) and (510.53,122.67) .. (511,122.2) .. controls (511.47,121.73) and (511.67,120.2) .. (511,120.2) ;
\draw  [line width=3] [line join = round][line cap = round] (513,120.2) .. controls (512.67,120.2) and (512.33,120.2) .. (512,120.2) ;
\draw  [line width=3] [line join = round][line cap = round] (512.28,118.34) .. controls (512.28,118.34) and (512.28,118.34) .. (512.28,118.34) ;
\draw  [line width=3] [line join = round][line cap = round] (508.28,118.34) .. controls (508.28,118.34) and (508.28,118.34) .. (508.28,118.34) ;
\draw  [line width=3] [line join = round][line cap = round] (508.28,121.34) .. controls (508.28,121.34) and (508.28,121.34) .. (508.28,121.34) ;
\draw  [line width=3] [line join = round][line cap = round] (509.28,119.34) .. controls (509.28,119.34) and (509.28,119.34) .. (509.28,119.34) ;
\draw  [line width=3] [line join = round][line cap = round] (560,120) .. controls (560,120) and (560,120) .. (560,120) ;
\draw   (564.5,120) .. controls (564.5,117.51) and (562.49,115.5) .. (560,115.5) .. controls (557.51,115.5) and (555.5,117.51) .. (555.5,120) .. controls (555.5,122.49) and (557.51,124.5) .. (560,124.5) .. controls (562.49,124.5) and (564.5,122.49) .. (564.5,120) -- cycle ;
\draw  [line width=3] [line join = round][line cap = round] (559,119.2) .. controls (559.67,119.2) and (560.53,119.67) .. (561,119.2) .. controls (561.24,118.96) and (561.3,118.35) .. (561,118.2) .. controls (560.16,117.78) and (558.33,119.53) .. (559,120.2) .. controls (560.33,121.53) and (562.33,119.53) .. (561,118.2) .. controls (557.78,114.98) and (559.97,122.17) .. (558,120.2) .. controls (556.22,118.42) and (561.78,119.98) .. (560,118.2) .. controls (559.03,117.23) and (558.39,120.97) .. (559,122.2) .. controls (559.3,122.8) and (560.53,122.67) .. (561,122.2) .. controls (561.47,121.73) and (561.67,120.2) .. (561,120.2) ;
\draw  [line width=3] [line join = round][line cap = round] (563,120.2) .. controls (562.67,120.2) and (562.33,120.2) .. (562,120.2) ;
\draw  [line width=3] [line join = round][line cap = round] (562.28,118.34) .. controls (562.28,118.34) and (562.28,118.34) .. (562.28,118.34) ;
\draw  [line width=3] [line join = round][line cap = round] (558.28,118.34) .. controls (558.28,118.34) and (558.28,118.34) .. (558.28,118.34) ;
\draw  [line width=3] [line join = round][line cap = round] (558.28,121.34) .. controls (558.28,121.34) and (558.28,121.34) .. (558.28,121.34) ;
\draw  [line width=3] [line join = round][line cap = round] (559.28,119.34) .. controls (559.28,119.34) and (559.28,119.34) .. (559.28,119.34) ;
\draw  [line width=3] [line join = round][line cap = round] (310,120) .. controls (310,120) and (310,120) .. (310,120) ;
\draw   (314.5,120) .. controls (314.5,117.51) and (312.49,115.5) .. (310,115.5) .. controls (307.51,115.5) and (305.5,117.51) .. (305.5,120) .. controls (305.5,122.49) and (307.51,124.5) .. (310,124.5) .. controls (312.49,124.5) and (314.5,122.49) .. (314.5,120) -- cycle ;
\draw  [line width=3] [line join = round][line cap = round] (309,119.2) .. controls (309.67,119.2) and (310.53,119.67) .. (311,119.2) .. controls (311.24,118.96) and (311.3,118.35) .. (311,118.2) .. controls (310.16,117.78) and (308.33,119.53) .. (309,120.2) .. controls (310.33,121.53) and (312.33,119.53) .. (311,118.2) .. controls (307.78,114.98) and (309.97,122.17) .. (308,120.2) .. controls (306.22,118.42) and (311.78,119.98) .. (310,118.2) .. controls (309.03,117.23) and (308.39,120.97) .. (309,122.2) .. controls (309.3,122.8) and (310.53,122.67) .. (311,122.2) .. controls (311.47,121.73) and (311.67,120.2) .. (311,120.2) ;
\draw  [line width=3] [line join = round][line cap = round] (313,120.2) .. controls (312.67,120.2) and (312.33,120.2) .. (312,120.2) ;
\draw  [line width=3] [line join = round][line cap = round] (312.27,118.34) .. controls (312.27,118.34) and (312.27,118.34) .. (312.27,118.34) ;
\draw  [line width=3] [line join = round][line cap = round] (308.27,118.34) .. controls (308.27,118.34) and (308.27,118.34) .. (308.27,118.34) ;
\draw  [line width=3] [line join = round][line cap = round] (308.27,121.34) .. controls (308.27,121.34) and (308.27,121.34) .. (308.27,121.34) ;
\draw  [line width=3] [line join = round][line cap = round] (309.27,119.34) .. controls (309.27,119.34) and (309.27,119.34) .. (309.27,119.34) ;
\draw  [line width=3] [line join = round][line cap = round] (268,174) .. controls (268,174) and (268,174) .. (268,174) ;
\draw   (272.5,174) .. controls (272.5,171.51) and (270.49,169.5) .. (268,169.5) .. controls (265.51,169.5) and (263.5,171.51) .. (263.5,174) .. controls (263.5,176.49) and (265.51,178.5) .. (268,178.5) .. controls (270.49,178.5) and (272.5,176.49) .. (272.5,174) -- cycle ;
\draw  [line width=3] [line join = round][line cap = round] (267,173.2) .. controls (267.67,173.2) and (268.53,173.67) .. (269,173.2) .. controls (269.24,172.96) and (269.3,172.35) .. (269,172.2) .. controls (268.16,171.78) and (266.33,173.53) .. (267,174.2) .. controls (268.33,175.53) and (270.33,173.53) .. (269,172.2) .. controls (265.78,168.98) and (267.97,176.17) .. (266,174.2) .. controls (264.22,172.42) and (269.78,173.98) .. (268,172.2) .. controls (267.03,171.23) and (266.39,174.97) .. (267,176.2) .. controls (267.3,176.8) and (268.53,176.67) .. (269,176.2) .. controls (269.47,175.73) and (269.67,174.2) .. (269,174.2) ;
\draw  [line width=3] [line join = round][line cap = round] (271,174.2) .. controls (270.67,174.2) and (270.33,174.2) .. (270,174.2) ;
\draw  [line width=3] [line join = round][line cap = round] (270.27,172.34) .. controls (270.27,172.34) and (270.27,172.34) .. (270.27,172.34) ;
\draw  [line width=3] [line join = round][line cap = round] (266.27,172.34) .. controls (266.27,172.34) and (266.27,172.34) .. (266.27,172.34) ;
\draw  [line width=3] [line join = round][line cap = round] (266.27,175.34) .. controls (266.27,175.34) and (266.27,175.34) .. (266.27,175.34) ;
\draw  [line width=3] [line join = round][line cap = round] (267.27,173.34) .. controls (267.27,173.34) and (267.27,173.34) .. (267.27,173.34) ;
\draw  [line width=3] [line join = round][line cap = round] (204.5,86.5) .. controls (204.5,86.5) and (204.5,86.5) .. (204.5,86.5) ;
\draw   (209,86.5) .. controls (209,84.01) and (206.99,82) .. (204.5,82) .. controls (202.01,82) and (200,84.01) .. (200,86.5) .. controls (200,88.99) and (202.01,91) .. (204.5,91) .. controls (206.99,91) and (209,88.99) .. (209,86.5) -- cycle ;
\draw  [line width=3] [line join = round][line cap = round] (203.5,85.7) .. controls (204.17,85.7) and (205.03,86.17) .. (205.5,85.7) .. controls (205.74,85.46) and (205.8,84.85) .. (205.5,84.7) .. controls (204.66,84.28) and (202.83,86.03) .. (203.5,86.7) .. controls (204.83,88.03) and (206.83,86.03) .. (205.5,84.7) .. controls (202.28,81.48) and (204.47,88.67) .. (202.5,86.7) .. controls (200.72,84.92) and (206.28,86.48) .. (204.5,84.7) .. controls (203.53,83.73) and (202.89,87.47) .. (203.5,88.7) .. controls (203.8,89.3) and (205.03,89.17) .. (205.5,88.7) .. controls (205.97,88.23) and (206.17,86.7) .. (205.5,86.7) ;
\draw  [line width=3] [line join = round][line cap = round] (207.5,86.7) .. controls (207.17,86.7) and (206.83,86.7) .. (206.5,86.7) ;
\draw  [line width=3] [line join = round][line cap = round] (206.77,84.84) .. controls (206.77,84.84) and (206.77,84.84) .. (206.77,84.84) ;
\draw  [line width=3] [line join = round][line cap = round] (202.77,84.84) .. controls (202.77,84.84) and (202.77,84.84) .. (202.77,84.84) ;
\draw  [line width=3] [line join = round][line cap = round] (202.77,87.84) .. controls (202.77,87.84) and (202.77,87.84) .. (202.77,87.84) ;
\draw  [line width=3] [line join = round][line cap = round] (203.77,85.84) .. controls (203.77,85.84) and (203.77,85.84) .. (203.77,85.84) ;
\draw  [line width=3] [line join = round][line cap = round] (204,154.6) .. controls (204,154.6) and (204,154.6) .. (204,154.6) ;
\draw  [line width=3] [line join = round][line cap = round] (268.27,65.34) .. controls (268.27,65.34) and (268.27,65.34) .. (268.27,65.34) ;
\draw  [line width=3] [line join = round][line cap = round] (204,154) .. controls (204,154) and (204,154) .. (204,154) ;
\draw   (208.5,154) .. controls (208.5,151.51) and (206.49,149.5) .. (204,149.5) .. controls (201.51,149.5) and (199.5,151.51) .. (199.5,154) .. controls (199.5,156.49) and (201.51,158.5) .. (204,158.5) .. controls (206.49,158.5) and (208.5,156.49) .. (208.5,154) -- cycle ;
\draw  [line width=3] [line join = round][line cap = round] (203,153.2) .. controls (203.67,153.2) and (204.53,153.67) .. (205,153.2) .. controls (205.24,152.96) and (205.3,152.35) .. (205,152.2) .. controls (204.16,151.78) and (202.33,153.53) .. (203,154.2) .. controls (204.33,155.53) and (206.33,153.53) .. (205,152.2) .. controls (201.78,148.98) and (203.97,156.17) .. (202,154.2) .. controls (200.22,152.42) and (205.78,153.98) .. (204,152.2) .. controls (203.03,151.23) and (202.39,154.97) .. (203,156.2) .. controls (203.3,156.8) and (204.53,156.67) .. (205,156.2) .. controls (205.47,155.73) and (205.67,154.2) .. (205,154.2) ;
\draw  [line width=3] [line join = round][line cap = round] (207,154.2) .. controls (206.67,154.2) and (206.33,154.2) .. (206,154.2) ;
\draw  [line width=3] [line join = round][line cap = round] (206.27,152.34) .. controls (206.27,152.34) and (206.27,152.34) .. (206.27,152.34) ;
\draw  [line width=3] [line join = round][line cap = round] (202.27,152.34) .. controls (202.27,152.34) and (202.27,152.34) .. (202.27,152.34) ;
\draw  [line width=3] [line join = round][line cap = round] (202.27,155.34) .. controls (202.27,155.34) and (202.27,155.34) .. (202.27,155.34) ;
\draw  [line width=3] [line join = round][line cap = round] (203.27,153.34) .. controls (203.27,153.34) and (203.27,153.34) .. (203.27,153.34) ;
\draw  [line width=3] [line join = round][line cap = round] (268,65) .. controls (268,65) and (268,65) .. (268,65) ;
\draw   (272.5,65) .. controls (272.5,62.51) and (270.49,60.5) .. (268,60.5) .. controls (265.51,60.5) and (263.5,62.51) .. (263.5,65) .. controls (263.5,67.49) and (265.51,69.5) .. (268,69.5) .. controls (270.49,69.5) and (272.5,67.49) .. (272.5,65) -- cycle ;
\draw  [line width=3] [line join = round][line cap = round] (267,64.2) .. controls (267.67,64.2) and (268.53,64.67) .. (269,64.2) .. controls (269.24,63.96) and (269.3,63.35) .. (269,63.2) .. controls (268.16,62.78) and (266.33,64.53) .. (267,65.2) .. controls (268.33,66.53) and (270.33,64.53) .. (269,63.2) .. controls (265.78,59.98) and (267.97,67.17) .. (266,65.2) .. controls (264.22,63.42) and (269.78,64.98) .. (268,63.2) .. controls (267.03,62.23) and (266.39,65.97) .. (267,67.2) .. controls (267.3,67.8) and (268.53,67.67) .. (269,67.2) .. controls (269.47,66.73) and (269.67,65.2) .. (269,65.2) ;
\draw  [line width=3] [line join = round][line cap = round] (271,65.2) .. controls (270.67,65.2) and (270.33,65.2) .. (270,65.2) ;
\draw  [line width=3] [line join = round][line cap = round] (270.27,63.34) .. controls (270.27,63.34) and (270.27,63.34) .. (270.27,63.34) ;
\draw  [line width=3] [line join = round][line cap = round] (266.27,63.34) .. controls (266.27,63.34) and (266.27,63.34) .. (266.27,63.34) ; 
\draw  [line width=3] [line join = round][line cap = round] (266.27,66.34) .. controls (266.27,66.34) and (266.27,66.34) .. (266.27,66.34) ;
\draw  [line width=3] [line join = round][line cap = round] (267.27,64.34) .. controls (267.27,64.34) and (267.27,64.34) .. (267.27,64.34) ;
\draw   (310,120) -- (269.44,175.83) -- (203.81,154.5) -- (203.81,85.5) -- (269.44,64.17) -- cycle ;
\draw (312,90.4) node [anchor=north west][inner sep=0.75pt]    {$x_{5}$};
\draw (172,74.4) node [anchor=north west][inner sep=0.75pt]    {$x_{2}$};
\draw (282,169.4) node [anchor=north west][inner sep=0.75pt]    {$x_{4}$};
\draw (352,129.9) node [anchor=north west][inner sep=0.75pt]    {$y_{1}$};
\draw (402.5,130.4) node [anchor=north west][inner sep=0.75pt]    {$y_{2}$};
\draw (543.76,129.58) node [anchor=north west][inner sep=0.75pt]    {$y_{n}{}_{-1}$};
\draw (602.5,130.4) node [anchor=north west][inner sep=0.75pt]    {$y_{n}$};
\draw (282,48.4) node [anchor=north west][inner sep=0.75pt]    {$x_{1}$};
\draw (172,150.4) node [anchor=north west][inner sep=0.75pt]    {$x_{3}$};
\end{tikzpicture}
    \caption{The tadpole $T_{5,n}$}
    \label{fig5}
\end{figure}
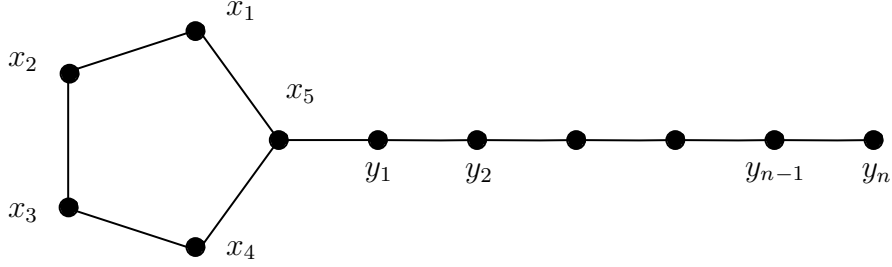
\begin{proof}
    From Proposition \ref{comp_inde_graph}, see Figure \ref{fig5}, we have
    \begin{align*}
        I\br{T_{5,n};t} &= I\br{T_{5,n}\setminus x_1;t}+ tI\br{T_{5,n}\setminus N\brc{x_1};t}\\
        &= I\br{P_{n+4};t}+ t(1+2t)I\br{P_n;t}\\
        &= \br{I\br{P_{n+3};t}+ tI\br{P_{n+2};t}}+ t(1+2t)I\br{P_n;t}\\
        &= \br{I\br{P_{n+3};t}+ t(1+t)I\br{P_n;t}} + t\br{I\br{P_{n+2};t}+ tI\br{P_n;t}}\\
        &= I\br{T_{4,n};t}+ tI\br{C_{n+3};t}.
    \end{align*}

Applying Proposition \ref{compare_modes}(ii), we have two following cases:\\
\underline{\textbf{Case 1.} $\rho_{n+3}=\lambda_{n+2}+1$.} From Proposition \ref{T_4,n}, $I\br{T_{4,n};t}$ is unimodal with the mode $\lambda_{n+2}+1$. Hence, applying Lemma \ref{unimodal_mode_unit}, $I\br{T_{5,n};t}$ is unimodal whose mode belongs to $\brb{\lambda_{n+2}+1,\lambda_{n+2}+2}$.\\
\underline{\textbf{Case 2.} $\rho_{n+3}=\lambda_{n+2}$.} From Proposition \ref{T_4,n}, $I\br{T_{4,n};t}$ is unimodal whose mode belongs to $\brb{\lambda_{n+2},\lambda_{n+2}+1}$. Applying Lemma \ref{unimodal_mode_unit}, $I\br{T_{5,n};t}$ is unimodal whose mode belongs to $\brb{\lambda_{n+2},\lambda_{n+2}+1}$.

In conclusion, $I\br{T_{5,n};t}$ is unimodal with the mode belongs to $\brb{\lambda_{n+2},\lambda_{n+2}+1,\lambda_{n+2}+2}$.
\end{proof}

%%%%%%%%%%%%%%%%%%%%%%%%%%%%%%%%%%%%%%%%%%%%%%
\section{WLP for algebras associated to tadpole graphs}
In this section, we study the WLP for artinian monomial algebras associated to certain tadpole graphs. 
For a tadpole graph $T_{m,n}$, with $m\geq 3,n\geq 1$, we consider 
\begin{align*}
R= \Bbbk\brc{x_1,x_2,\ldots,x_m,y_1,y_2,\ldots,y_n},
\end{align*} 
$I(T_{m,n})\subset R$ is the edge ideal of $T_{m,n}$, $I=\br{x_1^2,\ldots,x_m^2,y_1^2,\ldots,y_n^2}+I\br{T_{m,n}}$. Then, $A\br{T_{m,n}}$ $=R/I$ is artinian monomial algebra associated to $T_{m,n}$. From now on, we always assume that the field $\Bbbk$ is of characteristic zero and denote by $\ell$ the sum of variables in the polynomial ring we are working with.
%Theorem 4.1
\begin{thm}\label{wlp_Tm2}
$A\br{T_{m,2}}$ has the WLP if and only if $m\in \brb{4,5,7,8,11}$.
\end{thm}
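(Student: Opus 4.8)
The plan is to handle both directions at once, reducing the statement to a finite \texttt{Macaulay2} verification together with a structural argument that forces the WLP to fail for every large $m$. Since the Hilbert series of $A(T_{m,2})$ equals $I(T_{m,2};t)$, which is already known to be unimodal with mode in $\brb{\rho_m,\rho_m+1}$, the necessary unimodality condition of \Cref{unimodalityofHilbertseries} is automatic; the entire content therefore lies in controlling the ranks of the maps $\times\ell$. Throughout I take $\ell$ to be the sum of all variables and write $\ell_C=x_1+\cdots+x_m$ and $\ell_{\mathrm{Pan}}=\ell_C+y_1$.

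The main tool I would set up is an exact sequence refining the identity $I(T_{m,2};t)=I(\mathrm{Pan}_m;t)+t\,I(C_m;t)$ used in \Cref{modeT_{m,n}}. Because $y_2$ is adjacent only to $y_1$, the subspace $y_2\cdot A(C_m)\subset A(T_{m,2})$ spanned by the monomials divisible by $y_2$ is stable under $\times\ell$, on which $\ell$ acts as $\times\ell_C$, while the quotient is $A(\mathrm{Pan}_m)$ with $\times\ell_{\mathrm{Pan}}$. This gives a short exact sequence $0\to y_2A(C_m)\to A(T_{m,2})\to A(\mathrm{Pan}_m)\to 0$ compatible with multiplication by $\ell$, and the snake lemma produces, in each degree $j$, the six-term sequence
\[
0\to K^{C}_{j-1}\to K^{T}_{j}\to K^{\mathrm{Pan}}_{j}\xrightarrow{\ \delta\ }Q^{C}_{j-1}\to Q^{T}_{j}\to Q^{\mathrm{Pan}}_{j}\to 0,
\]
where $K^{\bullet}$ and $Q^{\bullet}$ denote kernels and cokernels of the pertinent multiplication maps. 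In particular there is always a surjection $Q^{T}_{j}\twoheadrightarrow Q^{\mathrm{Pan}}_{j}$ and an injection $K^{C}_{j-1}\hookrightarrow K^{T}_{j}$, which reduce the maximal-rank question for $A(T_{m,2})$ to the corresponding questions for $\mathrm{Pan}_m$ and $C_m$, modulated by the connecting map $\delta$.

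For sufficiency, and for the necessity at small $m$ below a threshold $m_0$, I would compute the rank of $\times\ell$ in every degree directly in \texttt{Macaulay2}, confirming that WLP holds exactly on $\brb{4,5,7,8,11}$ and fails for the remaining $m\le m_0$ (notably $m=3,6,9,10$ and $12\le m\le m_0$). For necessity when $m>m_0$ I would show WLP must fail using the surjection $Q^{T}_{j}\twoheadrightarrow Q^{\mathrm{Pan}}_{j}$: it suffices to exhibit a degree $j\ge \mathrm{mode}(T_{m,2})$ at which $A(\mathrm{Pan}_m)$ already fails surjectivity. The WLP classification of the pan graphs $\mathrm{Pan}_m=T_{m,1}$ from \cite{NT2024} supplies such a failure at the pan's mode $\zeta_m$ for all large $m$, and the comparisons of \Cref{compare_modes}, which put $\zeta_m$ and $\mathrm{mode}(T_{m,2})$ both in $\brb{\rho_m,\rho_m+1}$, are exactly what let me line this failure degree up against a degree where $A(T_{m,2})$ is required to be surjective; in the residual cases I would instead feed the cycle's surjectivity failure at $\rho_m$ from \Cref{thm_Cn} through the term $Q^{C}_{j-1}$.

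The step I expect to be the main obstacle is precisely this index alignment together with the behaviour of $\delta$. In the boundary cases where $\mathrm{mode}(T_{m,2})=\rho_m+1$ while the pan's failure sits in degree $\rho_m$, the surjection $Q^{T}\twoheadrightarrow Q^{\mathrm{Pan}}$ no longer lands in a degree where surjectivity is demanded, and one must instead show that $\delta$ fails to surject onto $Q^{C}_{j-1}$, so that the cycle's cokernel genuinely survives in $Q^{T}_{j}$. This requires the explicit description of the connecting map (on $z\in K^{\mathrm{Pan}}_{j}$ it returns the class of the $y_1$-free part of $z$ in $Q^{C}_{j-1}$) together with a dimension count near the mode; verifying that this survival is eventually periodic in $m$ is what pins down the finite exceptional list and closes the inductive argument.
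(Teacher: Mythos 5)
Your reduction via the short exact sequence $0\to y_2A\br{C_m}\to A\br{T_{m,2}}\to A\br{\mbox{Pan}_m}\to 0$ and the snake lemma is a genuinely different (and in principle workable) framework from the paper's, which instead kills or colons out single variables to land directly on path algebras: the paper uses $R/\br{I+\br{x_{m-1}}}\cong A\br{P_{m+1}}$ for surjectivity failures, $R/\br{I:\br{x_2y_1}}\cong A\br{P_{m-4}}$ shifted by $2$ for injectivity failures, and $R/\br{I+\br{x_m}}\cong A\br{P_{m-1}}\otimes_\Bbbk A\br{P_2}$ together with \Cref{tensor}, so that every needed rank failure is imported from the explicit statements of \Cref{thm_Pn}. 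Your approach has the aesthetic advantage of categorifying the identity $I\br{T_{m,2};t}=I\br{\mbox{Pan}_m;t}+tI\br{C_m;t}$, but it is not completed, and the incompleteness sits exactly at the decisive point.

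Concretely, there are two gaps. First, you invoke a surjectivity failure of $A\br{\mbox{Pan}_m}$ at its mode $\zeta_m$ for all large $m$; the failure of the WLP for $T_{m,1}$ recorded in \cite{NT2024} does not by itself locate the failure at $\zeta_m$ nor guarantee it is a surjectivity (rather than injectivity) failure, and no such statement is quoted in this paper (only the path and cycle versions, \Cref{thm_Pn} and \Cref{thm_Cn}, are available). Second, and more seriously, in the boundary case where the mode of $I\br{T_{m,2};t}$ is $\rho_m+1$ while the available failures sit in degree $\rho_m$, your plan is to show the connecting map $\delta:K^{\mathrm{Pan}}_{\rho_m+1}\to Q^{C}_{\rho_m}$ is not surjective so that $Q^{C}_{\rho_m}$ survives into $Q^{T}_{\rho_m+1}$. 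You give no argument for this beyond "a dimension count near the mode," and the suggested control by periodicity in $m$ cannot work as stated: the case distinctions are governed by the jumps of $\lambda_m=\ceil{\frac{5m+2-\sqrt{5m^2+20m+24}}{10}}$, which are not periodic. This is precisely the case the paper must split further (according to whether $\lambda_m=\lambda_{m-1}+1$ or $\lambda_m=\lambda_{m-1}$, with $m=17$ handled separately), using an injectivity failure of $A\br{P_{m-4}}$ strictly below the mode in one branch and a surjectivity failure of $A\br{P_{m-1}}\otimes_\Bbbk A\br{P_2}$ at the mode in the other. Until you either carry out the analysis of $\delta$ or replace it by such reductions, the argument establishes the failure of the WLP only when the relevant failure degree already lies at or above the mode of $I\br{T_{m,2};t}$, and the theorem is not proved for the remaining infinitely many $m$.
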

\begin{proof}
By using \texttt{Macaulay2} \cite{Macaulay2}, we can check that for $3\leq m\leq 15$, $A\br{T_{m,2}}$ has the WLP if and only if $m\in \brb{4,5,7,8,11}$. Consider $m\geq 16$ and see Figure \ref{fig2}. By \Cref{modeT_{m,n}}, we consider the following two cases:\\
\underline{\textbf{Case 1.} $I\br{T_{m,2};t}$ has the mode $\rho_m$.} We have the exact sequence 
    \begin{center}
        ${\xymatrix{
        R/I \ar@{->>}[r] & R/\br{I+\br{x_{m-1}}} \ar[r] &0
        }}$
    \end{center}
    and $R/\br{I+\br{x_{m-1}}}\cong A\br{P_{m+1}}$. Because $m+1\geq 17$, applying \Cref{thm_Pn}, $R/\br{I+\br{x_{m-1}}}$ fails the surjectivity at $\lambda_{m+1}$. Note that $\lambda_{m+1}\geq \rho_m$. Hence, $R/I$ fails the surjectivity at $\lambda_{m+1}$. In other words, $A\br{T_{m,2}}$ fails the WLP in this case.\\
    \underline{\textbf{Case 2.} $I\br{T_{m,2};t}$ has the mode $\rho_m+1$.} If $\lambda_{m+1}\geq \rho_m+1$, we can prove similarly as in Case 1. Consider $\lambda_{m+1}< \rho_m+1$. Applying \Cref{thm_Pn}, we have $\lambda_{m+1}=\lambda_m=\rho_m$.\\
    \underline{\textbf{Subcase 2.1.} $\lambda_m=\lambda_{m-1}+1$.} Applying Proposition \ref{compare_modes}, we have $\lambda_{m-1}=\lambda_{m-2}=\lambda_{m-3}=\lambda_{m-4}$ and $\lambda_{m-4}=\lambda_{m-5}+1$. We have the exact sequence 
    \begin{center}
        ${\xymatrix{
        0 \ar[r] & R/\br{I:\br{x_2y_1}}(-2) \ar@{^{(}->}[r]^-{\cdot x_2y_1} & R/I
        }}$
    \end{center}
    and $R/\br{I:\br{x_2y_1}}\cong A\br{P_{m-4}}$. Because $m-4\geq12$, applying Theorem \ref{thm_Pn}, $\times \ell :\brc{A\br{P_{m-4}}}_{\lambda_{m-4}-1}\rightarrow \brc{A\br{P_{m-4}}}_{\lambda_{m-4}}$ is not injective. Note that $\lambda_{m-4}-1=\lambda_m-2=\rho_m-2$. Therefore, the map $\times \ell :\brc{R/I}_{\rho_m}\rightarrow \brc{R/I}_{\rho_m+1}$ is not injective. Hence, $A\br{T_{m,2}}$ fails the WLP in this case.\\
    \underline{\textbf{Subcase 2.2.} $\lambda_m=\lambda_{m-1}$.} If $m\geq18$ or $m=16$, consider the exact sequence
    \begin{center}
        ${\xymatrix{
        R/I \ar@{->>}[r] & R/\br{I+\br{x_{m}}} \ar[r] &0
        }}$
    \end{center}
     and $R/\br{I+\br{x_m}}\cong A\br{P_{m-1}}\otimes_\Bbbk A\br{P_2}$. We have $m-1\geq 17$ or $m-1=15$ so $A\br{P_{m-1}}$ fails the surjectivity at $\lambda_{m-1}$. On the other hand, $A\br{P_2}$ fails the surjectivity at 0. Then $A\br{P_{m-1}}\otimes_\Bbbk A\br{P_2}$ fails the surjectivity at $\lambda_{m-1}+1$ (Lemma \ref{tensor}). Therefore, the map $\times \ell :\brc{R/I}_{\rho_m+1}\rightarrow \brc{R/I}_{\rho_m+2}$ is not surjective. If $m=17$, the mode of $I\br{T_{17,2};t}$ is $\rho_{17}$ (return to Case 1).
\end{proof}

%Theorem 4.2
\begin{thm}\label{wlp_Tm3}
    $A\br{T_{m,3}}$ has the WLP if and only if $m\in \brb{3,4,5,6,7,8,10,11,14}$.
\end{thm}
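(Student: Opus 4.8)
The plan is to follow the template of the proof of \Cref{wlp_Tm2}: first verify the statement for all small $m$ by a direct \texttt{Macaulay2} computation, and then prove that $A\br{T_{m,3}}$ fails the WLP for every sufficiently large $m$ (the cutoff being governed by the thresholds $n\ge 17$ and $n\ge 12$ appearing in \Cref{thm_Pn}). Throughout I would use that $HS\br{A\br{T_{m,3}}}=I\br{T_{m,3};t}$ is unimodal with mode $M\in\brb{\rho_m,\rho_m+1,\rho_m+2}$, so that at each degree below $M$ the map $\times\ell$ must be injective and at each degree $\ge M$ it must be surjective; the game is to transport, through an induced commutative square, a failure of maximal rank from a path algebra realised either as a quotient or as a submodule of $R/I$.

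For the inductive part I would first record the reductions to paths (see \Cref{fig3}). Deleting the cycle vertex $x_{m-1}$ gives the surjection $R/I \twoheadrightarrow R/\br{I+\br{x_{m-1}}}\cong A\br{P_{m+2}}$, and deleting $x_m$ gives $R/\br{I+\br{x_m}}\cong A\br{P_{m-1}}\otimes_\kk A\br{P_3}$; dually, the independent set $\brb{y_1,y_3}$ yields the inclusion $A\br{P_{m-1}}(-2)\hookrightarrow R/I$ via multiplication by $y_1y_3$, since $R/\br{I:\br{y_1y_3}}\cong A\br{P_{m-1}}$, and $\brb{x_2,y_1,y_3}$ yields $A\br{P_{m-4}}(-3)\hookrightarrow R/I$. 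By \Cref{thm_Pn}, $A\br{P_n}$ fails surjectivity at $\lambda_n$ for $n\ge 17$ and fails injectivity from $\lambda_n-1$ to $\lambda_n$ whenever $\lambda_n=\lambda_{n-1}+1$ and $n\ge 12$. After the degree shifts these four maps feed failures into $R/I$ located, respectively, at degrees $\lambda_{m+2}$ (surjectivity), $\lambda_{m-1}+1$ (surjectivity, via \Cref{tensor}(i) and the fact that $A\br{P_3}$ is not surjective in degree $0$), $\lambda_{m-1}+1$ (injectivity) and $\lambda_{m-4}+2$ (injectivity).

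The decisive step is to locate $M$ relative to these degrees. Using \Cref{compare_modes} I would first pin down $\lambda_{m+2}\in\brb{\rho_m,\rho_m+1}$: indeed $\lambda_{m+2}\ge\lambda_m\ge\rho_m$ by monotonicity and (iii), while $\lambda_{m+2}\le\lambda_{m-1}+1\le\rho_m+1$ by (i) and (ii). Whenever $M\le\lambda_{m+2}$, the surjection onto $A\br{P_{m+2}}$ already fails surjectivity at a degree $\ge M$, hence in the non-increasing range of the Hilbert function, and the WLP fails; the tensor surjection onto $A\br{P_{m-1}}\otimes_\kk A\br{P_3}$ produces a failure at $\lambda_{m-1}+1$ and disposes of the cases with $M=\lambda_{m-1}+1$.

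The hard part is the remaining range $M>\lambda_{m+2}$, and in particular the genuinely new value $M=\rho_m+2$ that cannot occur for $T_{m,2}$. Here surjectivity cannot be used and one must force a failure of injectivity at degree $M-1$, which sits strictly below the mode. As in Subcases 2.1--2.2 of \Cref{wlp_Tm2}, I would split according to the increment behaviour of the relevant path mode: when $\lambda_{m-1}=\lambda_{m-2}+1$ or $\lambda_{m-4}=\lambda_{m-5}+1$, the inclusions $A\br{P_{m-1}}(-2)\hookrightarrow R/I$ or $A\br{P_{m-4}}(-3)\hookrightarrow R/I$ give injectivity failures at $\lambda_{m-1}+1$ or $\lambda_{m-4}+2$, and the inequalities of \Cref{compare_modes} must be invoked to certify that this degree equals $M-1$; when the mode does not jump, one falls back on the tensor surjection. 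The main obstacle is exactly this alignment: checking, case by case through the inequalities relating $\lambda_{m-1},\lambda_{m-4},\rho_m$ and $M$, that in every subcase at least one of the four transported failures lands in the half of the Hilbert function where maximal rank demands the opposite behaviour. Finally I would confirm that the finitely many values of $m$ not reached by the generic inequalities lie within the \texttt{Macaulay2} range, completing the classification $m\in\brb{3,4,5,6,7,8,10,11,14}$.
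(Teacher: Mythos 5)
Your overall skeleton matches the paper's: a \texttt{Macaulay2} check for small $m$, then a trichotomy on the mode $M\in\brb{\rho_m,\rho_m+1,\rho_m+2}$ of $I\br{T_{m,3};t}$, with surjectivity failures transported through the quotients $A\br{P_{m+2}}$ and $A\br{P_{m-1}}\otimes_\kk A\br{P_3}$ and injectivity failures through colon ideals. The surjectivity half of your plan (the range $M\le\lambda_{m+2}$, and the no-jump subcase of $M=\rho_m+1$, where one shows $\lambda_{m-1}=\rho_m$ so the tensor failure lands exactly at the mode) is essentially the paper's Cases 1 and 2.

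There is, however, a genuine gap in your treatment of $M=\rho_m+2$, which you yourself flag as the hard part. Two problems. First, your injectivity reductions are keyed to the wrong indices: the inclusions via $y_1y_3$ and $x_2y_1y_3$ need the jumps $\lambda_{m-1}=\lambda_{m-2}+1$, resp.\ $\lambda_{m-4}=\lambda_{m-5}+1$, to invoke \Cref{thm_Pn}(ii), whereas the hypotheses available in this case control a jump at $m$ itself; since $\lambda_n$ increases only about once every $3$ or $4$ steps, neither of your required jumps is guaranteed when $\lambda_m=\lambda_{m-1}+1$. The paper instead colons by the single vertex $x_{m-2}$, giving $A\br{P_m}(-1)\hookrightarrow R/I$, so the injectivity failure of $A\br{P_m}$ at $\lambda_m-1$ (available precisely when $\lambda_m=\lambda_{m-1}+1$) lands at $\lambda_m=\rho_m+1=M-1$. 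Second, and more seriously, your fallback ``when the mode does not jump, use the tensor surjection'' does not work here: that surjection's failure sits at $\lambda_{m-1}+1\le\rho_m+1<M$, strictly below the mode, where non-surjectivity of $\times\ell$ is expected and proves nothing. The paper's mechanism for the subcase $\lambda_m=\rho_m$ is different in kind: it uses that $A\br{P_m}$ fails surjectivity at its mode $\lambda_m$ together with $\dim_\kk\brc{A\br{P_m}}_{\lambda_m}\ge\dim_\kk\brc{A\br{P_m}}_{\lambda_m+1}$ to conclude that $\times\ell:\brc{A\br{P_m}}_{\lambda_m}\longrightarrow\brc{A\br{P_m}}_{\lambda_m+1}$ is not injective (with $m=16$ checked by an explicit Hilbert series), and then transports this injectivity failure to degree $\rho_m+1=M-1$ of $R/I$. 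This conversion of a surjectivity failure at the mode into an injectivity failure at the mode is the missing idea; without it, your four listed reductions do not cover all subcases of $M=\rho_m+2$.
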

\begin{proof}
    By using \texttt{Macaulay2}, we can check that for $3\leq m\leq 14$, $A\br{T_{m,3}}$ has the WLP if and only if $m\in \brb{3,4,5,6,7,8,10,11,14}$. Consider $m\geq 15$ and see Figure \ref{fig3}. \\
    \underline{\textbf{Case 1.} $I\br{T_{m,3};t}$ has the mode $\rho_m$.} Consider the exact sequence
    \begin{center}
        ${\xymatrix{
        R/I \ar@{->>}[r] & R/\br{I+\br{x_{m-1}}} \ar[r] &0
        }}$
    \end{center}
    and $R/\br{I+\br{x_{m-1}}}\cong A\br{P_{m+2}}$. Because $m+2\geq 17$, applying Theorem \ref{thm_Pn}, $R/\br{I+\br{x_{m-1}}}$ fails the surjectivity at $\lambda_{m+2}$. Note that $\lambda_{m+2}\geq \rho_m$. Hence, $R/I$ fails the surjectivity at $\lambda_{m+2}$. In other words, $A\br{T_{m,3}}$ fails the WLP in this case.\\
    \underline{\textbf{Case 2.} $I\br{T_{m,3};t}$ has the mode $\rho_m+1$.}\\
    \underline{\textbf{Subcase 2.1.} $\lambda_{m+2}\geq\rho_m+1$.} We prove similarly as in Case 1.\\
    \underline{\textbf{Subcase 2.2.} $\lambda_{m+2}\leq\rho_m\leq\lambda_{m-4}+1\leq\lambda_m$.} Then $\lambda_m=\lambda_{m+1}=\lambda_{m+2}=\rho_m.$
    
 If $\lambda_m=\lambda_{m-1}+1$, then we consider the exact sequence
        \begin{center}
        ${\xymatrix{
        0 \ar[r] & R/\br{I:\br{x_{m-2}}}(-1) \ar@{^{(}->}[r]^-{\cdot x_{m-2}} &R/I
        }}$
    \end{center}
        in which $R/\br{I:\br{x_{m-2}}}\cong A\br{P_m}$ fails the injectivity at $\lambda_m-1$ (Theorem \ref{thm_Pn}). Then $A\br{T_{m,3}}$ fails the injectivity (hence fails the WLP) at $\rho_m$. 
Conversely, one has $\lambda_m=\lambda_{m-1}=\lambda_{m+1}=\lambda_{m+2}$. In the case $m=17$, we can check directly by \texttt{Macaulay2} that $A\br{T_{17,3}}$ fails the injectivity at its mode minus 1. Consider $m\geq 15$ and $m\neq 17$. We have the exact sequence
        \begin{center}
            ${\xymatrix{
            R/I\ar@{->>}[r] & R/\br{I+\br{x_m}}\ar[r] & 0
            }}$
        \end{center}
         and $R/\br{I+\br{x_m}}\cong A\br{P_{m-1}}\otimes_\Bbbk A\br{P_3}$. Finally, for $m\geq 15$ and $m\neq 17$, $A\br{P_{m-1}}$ fails the surjectivity at $\lambda_{m-1}$, $A\br{P_3}$ fails the surjectivity at 0. Applying Lemma \ref{tensor}, $R/\br{I+\br{x_m}}$ fails the surjectivity at $\lambda_{m-1}+1=\rho_m+1$. So $A\br{T_{m,3}}$ fails the WLP in this case.\\
    \underline{\textbf{Case 3.} $I\br{T_{m,3};t}$ has the mode $\rho_m+2$.}\\
    \underline{\textbf{Subcase 3.1.} $\lambda_{m+2}\geq\rho_m+2$.} Then applying the same method as in Case 1.\\
    \underline{\textbf{Subcase 3.2.} $\lambda_m\leq\lambda_{m+2}\leq\rho_m+1\leq\lambda_{m-4}+2\leq\lambda_m+1$.}
    \begin{itemize}
        \item If $\lambda_m=\rho_m+1$, then $\lambda_m=\lambda_{m-1}+1$ (otherwise, we have $\lambda_m=\lambda_{m-1}\leq\rho_m=\lambda_m-1$, contracdition). Consider the exact sequence
        \begin{center}
            ${\xymatrix{
            0 \ar[r] & R/\br{I:\br{x_{m-2}}}(-1) \ar@{^{(}->}[r]^-{\cdot x_{m-2}} & R/I
            }}$
        \end{center}
        in which $R/\br{I:\br{x_{m-2}}}\cong A\br{P_m}$ fails the injectivity at $\lambda_m-1$ (Theorem \ref{thm_Pn}). Hence $A\br{T_{m,3}}$ fails the injectivity at $\rho_m+1$ (therefore fails the WLP).
        
        \item If $\lambda_m=\rho_m$, then consider the exact sequence 
        \begin{center}
            ${\xymatrix{
            0 \ar[r] & R/\br{I:\br{x_{m-2}}}(-1) \ar@{^{(}->}[r]^-{\cdot x_{m-2}} & R/I
            }}$
        \end{center}
        and $R/\br{I:\br{x_{m-2}}}\cong A\br{P_m}$. We will prove that the map $\times \ell :\brc{A\br{P_m}}_{\lambda_m}\rightarrow \brc{A\br{P_m}}_{\lambda_m+1}$ is not injective. With $m=16$, we have that the Hilbert series of $A\br{P_{16}}$ is
        \begin{center}
            $HS\br{A\br{P_{16}},t}=1+16t+105t^2+364t^3+715t^4+792t^5+462t^6+120t^7+9t^8.$
        \end{center}
        Then the above statement is true with $m=16$. For $m=15$ or $m\geq17$, $A\br{P_m}$ fails the surjectivity at $\lambda_m$. Note that $\dim_\Bbbk\brc{A\br{P_m}}_{\lambda_m}\geq\dim_\Bbbk\brc{A\br{P_m}}_{\lambda_m+1}$. Hence, the above map cannot be injective. 
        
        Therefore, the map $\times \ell:\brc{A\br{T_{m,3}}}_{\rho_m+1}$$\rightarrow\brc{A\br{T_{m,3}}}_{\rho_m+2}$ is not injective.
    \end{itemize}
\end{proof}

%Theorem 4.3
\begin{thm}\label{wlp_T4n}
    $A\br{T_{4,n}}$ has the WLP if and only if $n\in \brb{1,2,...,7,9,10,13}$.
\end{thm}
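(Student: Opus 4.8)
The plan is to follow the strategy of \Cref{wlp_Tm2} and \Cref{wlp_Tm3}: verify the small cases by machine, and then for every large $n$ exhibit a single multiplication map of non-maximal rank whose location is pinned down by the mode computation of \Cref{T_4,n} together with the structural failures recorded in \Cref{thm_Pn}. First I would run \texttt{Macaulay2} \cite{Macaulay2} to confirm that, for all $n$ up to a suitable bound $N_0$ (taken large enough to cover $n=13$ at the top of the list, the sporadic failures $n\in\brb{8,11,12}$, and the handful of exceptional values left over by the generic argument below), $A\br{T_{4,n}}$ has the WLP exactly when $n\in\brb{1,\ldots,7,9,10,13}$. Since every $n$ for which the WLP is asserted to hold is at most $13$, this already settles the ``if'' direction in full and disposes of all the small failures.

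For $n>N_0$ I would prove that the WLP always fails, splitting according to whether the mode of $I\br{T_{4,n};t}$ equals $\lambda_{n+2}$ or $\lambda_{n+2}+1$, as permitted by \Cref{T_4,n}. Three tools drive the argument. The first is the surjection $R/I\twoheadrightarrow R/\br{I+\br{x_1}}\cong A\br{P_{n+3}}$ obtained by killing the cycle vertex $x_1$ opposite the bridge: since surjectivity of $\times\ell$ descends to quotients, any failure of surjectivity of $A\br{P_{n+3}}$ at a degree $\geq$ the mode is inherited by $A\br{T_{4,n}}$. The second is the inclusion $0\to A\br{P_{n-2}}\br{-2}\xrightarrow{\,\cdot x_2y_1\,}R/I$ coming from the colon ideal $I:\br{x_2y_1}$, which lifts a non-injective $\times\ell$ on $A\br{P_{n-2}}$ upward by two degrees. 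The third is the surjection $R/I\twoheadrightarrow R/\br{I+\br{x_4}}\cong A\br{P_3}\otimes_\kk A\br{P_n}$, to which \Cref{tensor} applies.

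When the mode is $\lambda_{n+2}$, or when it is $\lambda_{n+2}+1$ but $\lambda_{n+3}\geq\lambda_{n+2}+1$, the first tool suffices: by \Cref{thm_Pn}(i) the algebra $A\br{P_{n+3}}$ fails surjectivity at $\lambda_{n+3}$, a degree which is $\geq$ the mode by \Cref{compare_modes}(i). The delicate case, and the main obstacle, is when the mode is $\lambda_{n+2}+1$ while $\lambda_{n+3}=\lambda_{n+2}$: now the path quotient only fails surjectivity at $\lambda_{n+2}=\text{mode}-1$, in the region where injectivity is the relevant condition, so I must manufacture the obstruction at the correct degree by hand.

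Here I would use \Cref{compare_modes}(i) to show $\lambda_{n+2}\in\brb{\lambda_n,\lambda_n+1}$ and branch accordingly. If $\lambda_{n+2}=\lambda_n$, the tensor tool produces a surjectivity failure at $\lambda_n+1=\text{mode}$ via \Cref{tensor}(i), using that $A\br{P_3}$ fails surjectivity at degree $0$ and $A\br{P_n}$ at $\lambda_n$. If instead $\lambda_{n+2}=\lambda_n+1$, I would invoke \Cref{thm_Pn}(ii) for $A\br{P_{n-2}}$ to obtain a non-injective $\times\ell$ at $\lambda_{n-2}-1$, which the colon inclusion transports to degree $\lambda_{n-2}+1\leq\lambda_{n+2}=\text{mode}-1$, the inequality $\lambda_{n-2}+1\leq\lambda_{n+2}$ again coming from \Cref{compare_modes}(i). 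The bookkeeping of these mode comparisons—and in particular checking that the hypothesis $\lambda_{n-2}=\lambda_{n-3}+1$ of \Cref{thm_Pn}(ii) is met whenever the injectivity branch is needed—is where the real work lies; the eventual periodicity of the sequence $\brb{\lambda_k}$ reduces this to finitely many residues, and any residual $n$ for which none of the generic obstructions lands correctly (analogous to $m=17$ in \Cref{wlp_Tm2} and \Cref{wlp_Tm3}) would be settled by a direct \texttt{Macaulay2} computation.
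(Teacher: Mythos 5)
Your overall skeleton --- machine verification for small $n$, then a case split on the mode of $I\br{T_{4,n};t}$ with a surjectivity obstruction inherited from $A\br{P_{n+3}}$ and, in the subcase $\lambda_{n+2}=\lambda_n$, from $A\br{P_3}\otimes_\kk A\br{P_n}$ --- is exactly the paper's proof of this theorem. The gap is in your final branch, where the mode is $\lambda_{n+2}+1$ and $\lambda_{n+2}=\lambda_n+1$. There you rely on a single injectivity tool, the inclusion of $A\br{P_{n-2}}(-2)$ into $R/I$ via $\cdot x_2y_1$, which forces you to verify the hypothesis $\lambda_{n-2}=\lambda_{n-3}+1$ of \Cref{thm_Pn}(ii). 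That hypothesis is not guaranteed in this branch: by \Cref{compare_modes}(i) the indices $k$ with $\lambda_k=\lambda_{k-1}+1$ are spaced $3$ or $4$ apart, and knowing only that such a jump occurs at $n+1$ or at $n+2$ leaves the preceding jump free to sit at any of $n-1$, $n-2$ or $n-3$; only the middle option makes your tool applicable. Your proposed rescue --- ``the eventual periodicity of the sequence $\brb{\lambda_k}$'' --- is not available: since $\lambda_k=\ceil{\br{5k+2-\sqrt{5k^2+20k+24}}/10}$ has irrational asymptotic slope, its jump pattern is aperiodic, so the exceptional $n$ do not form finitely many residue classes and cannot be swept up by finitely many \texttt{Macaulay2} computations.

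The paper closes exactly this hole by deploying three different colon ideals, one for each possible location of the nearest jump. If $\lambda_{n+1}=\lambda_n+1$ it uses $I:\br{x_2}$, giving $A\br{P_{n+1}}$ with a degree shift of $1$; if the jump is instead at $n+2$ and $\lambda_{n-1}=\lambda_{n-2}+1$ it passes to $R/\br{I:\br{x_4}}\cong A\br{P_{n-1}}\otimes_\kk \kk\brc{z}/\br{z^2}$; and only when the preceding jump is at $n-2$ does it use $I:\br{y_1}$, giving $A\br{P_{n-2}}\otimes_\kk A\br{P_3}$. In the latter two cases the extra degree shift needed to land the non-injectivity at $\lambda_{n+2}\rightarrow\lambda_{n+2}+1$ is supplied by \Cref{tensor}(ii), because the second tensor factor itself fails injectivity in degree $1$, rather than by multiplying by a degree-two monomial. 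To repair your argument you would need to add analogues of these two further subcases (or otherwise produce an obstruction whenever $\lambda_{n-2}=\lambda_{n-3}$).
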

\begin{proof}
    By using \texttt{Macaulay2}, we can check that for $1\leq n\leq 17$, $A\br{T_{4,n}}$ has the WLP if and only if $n\in \brb{1,2,...,7,9,10,13}$. Consider $n\geq 18$ and see Figure \ref{fig4}.\\
    \underline{\textbf{Case 1.} $I\br{T_{4,n};t}$ has the mode $\lambda_{n+2}$.} Consider the exact sequence
    \begin{center}
        ${\xymatrix{
        R/I \ar@{->>}[r] & R/\br{I+\br{x_3}}\ar[r] & 0
        }}$
    \end{center}
    and $R/\br{I+\br{x_3}}\cong A\br{P_{n+3}}$ fails the surjectivity at degree $\lambda_{n+3}$ (Theorem \ref{thm_Pn}). Note that $\lambda_{n+3}\geq \lambda_{n+2}$. Then, $R/I$ fails the surjectivity at $\lambda_{n+3}$ (and hence, fails the WLP).\\
    \underline{\textbf{Case 2.} $I\br{T_{4,n};t}$ has the mode $\lambda_{n+2}+1$.}\\
    \underline{\textbf{Subcase 2.1.} $\lambda_{n+2}=\lambda_n$.} Consider the exact sequence
    \begin{center}
        ${\xymatrix{
        R/I\ar@{->>}[r] & R/\br{I+\br{x_4}}\ar[r] &0
        }}$
    \end{center}
    and $R/\br{I+\br{x_4}}\cong A\br{P_3}\otimes_\Bbbk A\br{P_n}$. Since $A\br{P_3}$ and $A\br{P_n}$ fail, respectively, the surjectivity at degree $0$ and $\lambda_n$, we have that $A\br{P_3}\otimes_\Bbbk A\br{P_n}$ fails the surjectivity at $0+\lambda_n+1$ $= \lambda_n+1$  by \Cref{tensor}, therefore so does $R/I$.\\    
    \underline{\textbf{Subcase 2.2.} $\lambda_{n+2}=\lambda_n+1$.}\\
    \underline{\textbf{Subcase 2.2.1} $\lambda_{n+1}=\lambda_n+1$.} Consider the exact sequence
    \begin{center}
        ${\xymatrix{
        0\ar[r] & R/\br{I:\br{x_2}}(-1) \ar@{^{(}->}[r]^-{\cdot x_2}&R/I
        }}$
    \end{center}
     and $R/\br{I:\br{x_2}}\cong A\br{P_{n+1}}$ fails the injectivity at degree $\lambda_{n+1}-1$ (Theorem \ref{thm_Pn}). Hence, $A\br{T_{4,n}}$ fails the injectivity at $\lambda_{n+2}$.\\
    \underline{\textbf{Subcase 2.2.2} $\lambda_{n+1}=\lambda_n$.} Then $\lambda_{n-1}=\lambda_n=\lambda_{n+1}$.
    \begin{itemize}
        \item If $\lambda_{n-1}=\lambda_{n-2}+1$, then consider the exact sequence
        \begin{center}
            ${\xymatrix{0\ar[r] & R/\br{I:\br{x_4}}(-1) \ar@{^{(}->}[r]^-{\cdot x_4} & R/I}}$
        \end{center}
        and $R/\br{I:\br{x_4}}\cong A\br{P_{n-1}}\otimes_\Bbbk\frac{\Bbbk\brc{z}}{\br{z^2}}$. Since $A\br{P_{n-1}}$ and $\frac{\Bbbk\brc{z}}{\br{z^2}}$ fail, respectively, the injectivity at degree $\lambda_{n-1}-1$ and $1$, we have $A\br{P_{n-1}}\otimes_\Bbbk\frac{\Bbbk\brc{z}}{\br{z^2}}$ fails the injectivity at $\br{\lambda_{n-1}-1}+1=\lambda_{n-1}$ (Lemma \ref{tensor}). Then $R/I$ fails the injectivity at $\lambda_{n+2}$.
        
        \item If $\lambda_{n-1}=\lambda_{n-2}$, then $\lambda_{n-2}=\lambda_{n-3}+1$. Consider the exact sequence
        \begin{center}
            ${\xymatrix{
            0\ar[r] & R/\br{I:\br{y_1}}(-1) \ar@{^{(}->}[r]^-{\cdot y_1} & R/I
            }}$
        \end{center}
        and $R/\br{I:\br{y_1}}\cong A\br{P_{n-2}}\otimes_\Bbbk A\br{P_3}$. Since $A\br{P_{n-2}}$ and $A\br{P_3}$ fail, respectively, the injectivity at degree $\lambda_{n-2}-1$ and $1$, we have that $A\br{P_{n-2}}\otimes_\Bbbk A\br{P_3}$ fails the injectivity at $\br{\lambda_{n-2}-1}+1=\lambda_{n-2}$ by \Cref{tensor}. Then $R/I$ fails the injectivity at $\lambda_{n+2}$.
    \end{itemize}
\end{proof}
%Theorem 4.4
\begin{thm}\label{wlp_T5n}
    $A\br{T_{5,n}}$ has the WLP if and only if $n\in \brb{1,2,3,5,6,9}$.
\end{thm}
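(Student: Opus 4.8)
The plan is to follow the scheme of Theorems~\ref{wlp_Tm2}--\ref{wlp_T4n}: first check the statement for small $n$ by a direct \texttt{Macaulay2} computation, then prove that $A(T_{5,n})$ fails the WLP for all large $n$. Concretely, I would use \texttt{Macaulay2} to verify that for $1\le n\le 18$ one has the WLP exactly when $n\in\{1,2,3,5,6,9\}$, and treat $n\ge 19$ structurally. Two maps do almost all the work (see \Cref{fig5}): the surjection $R/I\twoheadrightarrow R/(I+(x_1))\cong A(P_{n+4})$, obtained by deleting the bridge-adjacent cycle vertex $x_1$ (which opens the pentagon into the path $x_2-x_3-x_4-x_5-y_1-\cdots-y_n$), and the injection $R/(I:x_3)(-1)\xrightarrow{\,\cdot x_3\,}R/I$, where $R/(I:x_3)\cong A(T_{5,n}\setminus N[x_3])\cong A(P_{n+2})$ because deleting $N[x_3]=\{x_2,x_3,x_4\}$ leaves the path $x_1-x_5-y_1-\cdots-y_n$. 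The key numerical coincidence is that $A(P_{n+2})$ has mode exactly $\lambda_{n+2}$, the base value in the description, given by the preceding proposition, of the mode of $I(T_{5,n};t)$ as an element of $\{\lambda_{n+2},\lambda_{n+2}+1,\lambda_{n+2}+2\}$.

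I would then split on the mode. If the mode is $\lambda_{n+2}$, the surjection onto $A(P_{n+4})$ suffices: by \Cref{thm_Pn} (here $n+4\ge 17$) $A(P_{n+4})$ fails surjectivity at $\lambda_{n+4}\ge\lambda_{n+2}$, and since a graded surjection commutes with $\times\ell$, failure of surjectivity transports to $R/I$ at a degree $\ge$ its mode. If the mode is $\lambda_{n+2}+2$, the injection does it: $A(P_{n+2})$ fails surjectivity at its mode $\lambda_{n+2}$, hence, the Hilbert function being non-increasing past the mode, it also fails injectivity from $\lambda_{n+2}$ to $\lambda_{n+2}+1$; multiplication by $x_3$ shifts this up by one, so $R/I$ fails injectivity from $\lambda_{n+2}+1$ to $\lambda_{n+2}+2$, exactly where injectivity is required. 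These are the direct analogues of Cases~1 and~3 in the proof of \Cref{wlp_Tm3}, and the injection alone covers the whole mode-$(\lambda_{n+2}+2)$ case precisely because the mode of $A(P_{n+2})$ matches.

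The delicate case is mode $=\lambda_{n+2}+1$, where the injection from $A(P_{n+2})$ would land one degree too late (at the mode, where surjectivity, not injectivity, is the maximal-rank condition). If $\lambda_{n+4}\ge\lambda_{n+2}+1$, the surjection onto $A(P_{n+4})$ again works. Otherwise $\lambda_{n+2}=\lambda_{n+3}=\lambda_{n+4}$, and I would sub-divide using \Cref{compare_modes}. If $\lambda_{n+2}=\lambda_{n+1}+1$, then \Cref{thm_Pn}(ii) gives that $A(P_{n+2})$ fails injectivity already from $\lambda_{n+2}-1$ to $\lambda_{n+2}$, so $\cdot x_3$ makes $R/I$ fail injectivity from $\lambda_{n+2}$ to $\lambda_{n+2}+1$, i.e.\ into the mode. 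The remaining sub-case $\lambda_{n+2}=\lambda_{n+1}$ is the genuine obstacle: there both $A(P_{n+4})$ and the path injection fail one degree short of the mode. To reach the mode I would instead delete the \emph{second} path vertex, using $R/I\twoheadrightarrow R/(I+(y_2))\cong A(\mathrm{Pan}_5)\otimes_\Bbbk A(P_{n-2})$, since removing $y_2$ disconnects the pendant pentagon $\mathrm{Pan}_5=T_{5,1}$ from $P_{n-2}$.

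The crux is that $I(\mathrm{Pan}_5;t)=1+6t+9t^2+3t^3$, so $A(\mathrm{Pan}_5)$ fails surjectivity in degree $1$ (not merely in degree $0$, as $A(C_5)$ would), while $A(P_{n-2})$ fails surjectivity at its mode $\lambda_{n-2}$; and the hypothesis $\lambda_{n+1}=\lambda_{n+2}$ pins $\lambda_{n-2}=\lambda_{n+2}-1$ via \Cref{compare_modes}(i). By \Cref{tensor}(i) the tensor product then fails surjectivity in degree $1+\lambda_{n-2}+1=\lambda_{n+2}+1$, exactly the mode, so $R/I$ fails surjectivity there. I expect the main obstacle to be precisely this matching of degrees on the ``plateau'' of the path-mode sequence, together with spotting that the correct quotient is $y_2$ rather than $y_1$ so as to supply a degree-one surjectivity defect; the rest is degree bookkeeping. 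To finish, I would confirm that all path-length thresholds (e.g.\ $n-2\ge 17$ for \Cref{thm_Pn}) hold for every $n\ge 19$ so that the computational and structural ranges meet, and verify that no exceptional value of $n$ (analogous to $m=17$ in \Cref{wlp_Tm3}) slips through, settling any such value by a direct \texttt{Macaulay2} check.
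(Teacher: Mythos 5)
Your proposal is correct and follows essentially the same route as the paper's proof: the same case split on the mode of $I(T_{5,n};t)$, the surjection onto $A(P_{n+4})$ (you delete $x_1$ where the paper deletes $x_4$, but both open the pentagon into $P_{n+4}$), the injection $\cdot x_3$ with $R/(I:x_3)\cong A(P_{n+2})$ for the mode-$(\lambda_{n+2}+2)$ case and the subcase $\lambda_{n+2}=\lambda_{n+1}+1$, and the quotient by $y_2$ giving $A(\mathrm{Pan}_5)\otimes_\Bbbk A(P_{n-2})$ with the Hilbert series $1+6t+9t^2+3t^3$ supplying the degree-one surjectivity defect in the remaining plateau subcase. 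The only differences are cosmetic (a slightly larger \texttt{Macaulay2} range, $1\le n\le 18$ versus $1\le n\le 16$, which conveniently makes all path lengths $\ge 17$ in the structural part).
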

\begin{proof}
    By using \texttt{Macaulay2}, we can check that for $1\leq n\leq 16$, $A\br{T_{5,n}}$ has the WLP if and only if $n\in \brb{1,2,3,5,6,9}$. Consider $n\geq 17$ and see Figure \ref{fig5}.\\
    \underline{\textbf{Case 1.} $I\br{T_{5,n};t}$ has the mode $\lambda_{n+2}$.} Consider the exact sequence 
    \begin{center}
        ${\xymatrix{
        R/I \ar@{->>}[r] & R/\br{I+\br{x_4}} \ar[r] & 0
        }}$
    \end{center}
    in which $R/\br{I+\br{x_4}}\cong A\br{P_{n+4}}$ fails the surjectivity at degree $\lambda_{n+4}$ by \Cref{thm_Pn}. Note that $\lambda_{n+4}\geq \lambda_{n+2}$. Therefore, $R/I$ fails the surjectivity at degree $\lambda_{n+4}\geq\lambda_{n+2}$, and hence, $R/I$ fails the WLP.\\
    \underline{\textbf{Case 2.} $I\br{T_{5,n};t}$ has the mode $\lambda_{n+2}+1$.} If $\lambda_{n+4}\geq\lambda_{n+2}+1$ , then using the same method as in Case 1.
    Consider $\lambda_{n+4}\leq \lambda_{n+2}$, then $\lambda_{n+2}=\lambda_{n+3}=\lambda_{n+4}$\\
    \underline{\textbf{Subcase 2.1.} $\lambda_{n+2}=\lambda_{n+1}+1$.} Consider the exact sequence 
    \begin{center}
        ${\xymatrix{
        0\ar[r] &R/\br{I:\br{x_3}}(-1) \ar@{^{(}->}[r]^-{\cdot x_3} & R/I
        }}$
    \end{center}
     in which $R/\br{I:\br{x_3}}\cong A\br{P_{n+2}}$ fails the injectivity at degree $\lambda_{n+2}-1$ by \Cref{thm_Pn}. Therefore, $R/I$ fails the injectivity at degree $\lambda_{n+2}$, and hence, $R/I$ fails the WLP.\\  
    \underline{\textbf{Subcase 2.2.} $\lambda_{n+2}=\lambda_{n+1}$.} In this case, $n\geq19$ (note that $\lambda_n= \ceil{\frac{5n+2-\sqrt{5n^2+20n+24}}{10}}$, see \cite{NT2024}, Proposition 3.1) and $\lambda_n=\lambda_{n-1}=\lambda_{n-2}= \lambda_{n+1}-1$. Consider the exact sequence
    \begin{center}
        ${\xymatrix{
        R/I \ar@{->>}[r] &R/\br{I+\br{y_2}}\ar[r] & 0
        }}$
    \end{center}
    and $R/\br{I+\br{y_2}}\cong A\br{P_{n-2}}\otimes_\Bbbk A\br{\mbox{Pan}_5}$. The Hilbert series of $A\br{\mbox{Pan}_5}$ is $1+6t+9t^2+3t^3$ and $n-2\geq17$. Therefore, $A\br{P_{n-2}}$ and $A\br{\mbox{Pan}_5}$ fail, respectively, the surjectivity at degree $\lambda_{n-2}$ and $1$. Hence, $A\br{P_{n-2}}\otimes_\Bbbk A\br{\mbox{Pan}_5}$ fails the surjectivity at degree $\lambda_{n-2}+1+1=\lambda_{n+2}+1$  by \Cref{tensor}. Then $R/I$ fails the surjectivity at degree $\lambda_{n+2}+1$.\\
    \underline{\textbf{Subcase 3.} $I\br{T_{5,n};t}$ has the mode $\lambda_{n+2}+2$.} Consider the exact sequence
    \begin{center}
        ${\xymatrix{
        0\ar[r] & R/\br{I:\br{x_3}}(-1) \ar@{^{(}->}[r]^-{\cdot x_3} & R/I
        }}$
    \end{center}
    and $R/\br{I:\br{x_3}}\cong A\br{P_{n+2}}$. It follows from \Cref{thm_Pn} that $A\br{P_{n+2}}$ fails the surjectivity at degree $\lambda_{n+2}$. Moreover, $\dim_\Bbbk \brc{A\br{P_{n+2}}}_{\lambda_{n+2}}\geq\dim_\Bbbk \brc{A\br{P_{n+2}}}_{\lambda_{n+2}+1}$. Therefore, the map $\times \ell :\brc{A\br{P_{n+2}}}_{\lambda_{n+2}}\rightarrow \brc{A\br{P_{n+2}}}_{\lambda_{n+2}+1}$ cannot be injective, and so does the map $\times \ell :\brc{R/I}_{\lambda_{n+2}+1}\rightarrow \brc{R/I}_{\lambda_{n+2}+2}$. Then $R/I$ fails the WLP at degree $\lambda_{n+2}+1$.
\end{proof}

\section{Acknowledgement}
This research is funded by Hue University of Education's Project under grant number T.24.TN.101.05.

%%%%%%%%%%%%%%%%%%%%%%%%%%%%%%%%%%%%%%%%%%
\bibliographystyle{plain} % style numéroté en anglais
%% Il y a plein d'autres possibilités
%% Fabrication de la biblio
\bibliography{WLP_Graphs} % pour afficher la biblio

\begin{thebibliography}{10}

\bibitem{AB2020}
N.~Altafi and M.~Boij.
\newblock The weak {L}efschetz property of equigenerated monomial ideals.
\newblock {\em J. Algebra}, 556:136--168, 2020.

\bibitem{BMMNZ12}
M.~Boij, J.C. Migliore, R.M. Mir\'{o}-Roig, U.~Nagel, and F.~Zanello.
\newblock On the shape of a pure {$O$}-sequence.
\newblock {\em Mem. Amer. Math. Soc.}, 218(1024):viii+78, 2012.

\bibitem{DaoNair2022}
H.~Dao and R.~Nair.
\newblock On the lefschetz property for quotients by monomial ideals containing
  squares of variables.
\newblock {\em Communications in Algebra}, 52(3):1260--1270, 2024.

\bibitem{Macaulay2}
D.R. Grayson and M.E. Stillman.
\newblock Macaulay2, a software system for research in algebraic geometry.

\bibitem{GH83}
I.~Gutman and F.~Harary.
\newblock Generalizations of the matching polynomial.
\newblock {\em Utilitas Math.}, 24:97--106, 1983.

\bibitem{HMMNWW2013}
T.~Harima, T.~Maeno, H.~Morita, Y.~Numata, A.~Wachi, and J.~Watanabe.
\newblock {\em The {L}efschetz properties}, volume 2080 of {\em Lecture Notes
  in Mathematics}.
\newblock Springer, Heidelberg, 2013.

\bibitem{HL94}
C.~Hoede and X.L. Li.
\newblock Clique polynomials and independent set polynomials of graphs.
\newblock {\em Discrete Math.}, 125:219--228, 1994.

\bibitem{MNS2020}
J.~Migliore, U.~Nagel, and H.~Schenck.
\newblock The weak {L}efschetz property for quotients by quadratic monomials.
\newblock {\em Math. Scand.}, 126(1):41--60, 2020.

\bibitem{MMN2011}
J.C. Migliore, R.M. Mir\'{o}-Roig, and U.~Nagel.
\newblock Monomial ideals, almost complete intersections and the weak
  {L}efschetz property.
\newblock {\em Trans. Amer. Math. Soc.}, 363(1):229--257, 2011.

\bibitem{NT2024}
H.D. Nguyen and Q.H. Tran.
\newblock The weak lefschetz property of artinian algebras associated to paths
  and cycles.
\newblock {\em Acta Math Vietnam}, 49(3):523--544, 2024.

\bibitem{QHT2020}
Q.H. Tran.
\newblock The {L}efschetz properties of artinian monomial algebras associated
  to some graphs.
\newblock {\em Journal of Science, Hue University of Education}, 59(3):12--22,
  2021.

\end{thebibliography}
\end{document}